\documentclass[12pt]{amsart}
\usepackage{etex}
\usepackage{etoolbox}
\patchcmd{\thebibliography}{*}{}{}{}
\usepackage{amssymb}
\usepackage{cite}
\usepackage{booktabs}
\usepackage{url}
\usepackage{hyphenat}
\usepackage{mathtools}
\usepackage{pifont}
\usepackage[all,cmtip]{xy}
\usepackage{ifpdf}
\usepackage{enumitem}
\usepackage{tikz-cd}
\usepackage{xcolor}
\usepackage{graphicx}
\usepackage{subcaption}
\usepackage[lmargin=1.25in,rmargin=1.25in,tmargin=1in,bmargin=1in]{geometry}
\usepackage{mathrsfs}
\usepackage{overpic}
\usepackage{xr-hyper}

\definecolor{darkblue}{rgb}{0,0,0.4} 
\usepackage[colorlinks=true, citecolor=darkblue, filecolor=darkblue, linkcolor=darkblue,urlcolor=darkblue]{hyperref} 
\usepackage[all]{hypcap}

\usetikzlibrary{matrix,arrows,3d}
\tikzset{zxplane/.style={canvas is zx plane at y=#1,very thin}}
\tikzset{xyplane/.style={canvas is xy plane at z=#1,very thin}}
\tikzset{yzplane/.style={canvas is yz plane at x=#1,very thin}}

\newcommand{\ZZ}{\mathbb Z}
\newcommand{\QQ}{\mathbb Q}

\newcommand{\FF}{\mathbb F}

\newcommand{\isom}{\simeq}

\newcommand{\tensor}{\otimes}

\newcommand{\lbracket}{[}
\newcommand{\rbracket}{]}

\DeclareMathOperator{\Sym}{Sym}

\DeclareMathOperator{\Hom}{Hom}

\DeclareMathOperator{\id}{id}

\DeclareMathOperator{\Cone}{Cone}
\DeclareMathOperator{\Conf}{Conf}

\theoremstyle{plain}

\newtheorem{theorem}{Theorem}[section]
\newtheorem{proposition}[theorem]{Proposition}
\newtheorem{lemma}[theorem]{Lemma}

\newtheorem{definition}[theorem]{Definition}

\newtheorem{condition}[theorem]{Condition}

\theoremstyle{definition}

\theoremstyle{remark}
\newtheorem{example}[equation]{Example}
\newtheorem{remark}[equation]{Remark}

\newcommand{\HF}{\mathit{HF}}

\newcommand{\HFh}{\widehat{\mathit{HF}}}

\newcommand{\CF}{{\mathit{CF}}}
\newcommand{\CFh}{\widehat{\mathit{CF}}}

\newcommand{\x}{\mathbf x}

\newcommand\HH{\mathit{HH}}

\newcommand\Hochschild\HH

\newcommand{\balpha}{{\boldsymbol{\alpha}}}
\newcommand{\bbeta}{{\boldsymbol{\beta}}}

\newcommand{\tbeta}{{\widetilde{\beta}}}
\newcommand{\tgamma}{{\widetilde{\gamma}}}

\newcommand{\pralpha}{\alpha_{\textrm{pre}}}
\newcommand{\prbeta}{\beta_\textrm{pre}}
\newcommand{\bbbeta}{\bold{\eta}}
\newcommand{\bteta}{\widetilde{\bold{\eta}}}

\newcommand{\twibeta}{\underline{\beta\beta'\beta''}}
\newcommand{\twitbeta}{\underline{\tilde{\beta}\tilde{\beta'}\tilde{\beta''}}}
\newcommand{\ttheta}{\tilde{\theta}}
\newcommand{\twigamma}{\underline{\gamma\gamma'\gamma''}}
\newcommand{\twitgamma}{\underline{\tilde{\gamma}\tilde{\gamma'}\tilde{\gamma''}}}

\newcommand{\HFheq}{\HFh_{\ZZ/2\ZZ}}

\newcommand{\mA}{\mathcal{A}}

\newcommand{\Field}{{\FF_2}}

\newcommand{\Heegaard}{\mathcal{H}}
\newcommand{\HD}{\Heegaard}

\DeclareMathOperator{\Fix}{Fix}

\newcommand{\Index}{\{0,1,\infty\}^k}

\newcommand{\Z}{\ZZ/2\ZZ}

\makeatletter
\newcommand\honestalg[3]{\bigl\lbracket
\begin{smallmatrix} #1\@ifempty{#3}{}{&#3} \\ #2 \end{smallmatrix}
\bigr\rbracket}

\makeatother

\newcommand{\fix}{\mathit{fix}}

\makeatletter

\newcommand*\wthelper[2]{%
        \hbox{\dimen@\accentfontxheight#1%
                \accentfontxheight#11.3\dimen@
                $\m@th#1\widetilde{#2}$%
                \accentfontxheight#1\dimen@
        }%
}
\newcommand*\accentfontxheight[1]{%
        \fontdimen5\ifx#1\displaystyle
                \textfont
        \else\ifx#1\textstyle
                \textfont
        \else\ifx#1\scriptstyle
                \scriptfont
        \else
                \scriptscriptfont
        \fi\fi\fi3
}
\makeatother

\begin{document}
	\title[equivariant Floer homology of double branched covers]{Exact triangle and Spectral sequence of the equivariant Floer homology of double branched covers}
	
	\author{Jinzhou Huang}
	\email{jhuang10@caltech.edu}

	\begin{abstract}
		In \cite{OS05} the authors proved an exact triangle for the Heegaard Floer homology of the double branched covers associated to a skein sequence of links,  and constructed a spectral sequence from the reduced Khovanov homology to the Heegaard Floer homology of the double branched cover of a link. For $L\subset S^3$ a link and $\Sigma(L)$ its double branched cover, the invariant $\HFh(\Sigma(L))$ doesn't capture the information of the natural involution on $\Sigma(L)$. In this paper we study the equivariant version of $\HFh(\Sigma(L))$ and prove that similar exact triangle and spectral sequence hold for this equivariant version.
		\end{abstract}
		
		\maketitle

\section{Introduction}
		\begin{definition}\label{borel}
			Given a finite dimensional chain complex $(E,\partial)$(possibly ungraded) over $\Field$ with a $\ZZ/2\ZZ$-action $\tau$, denote its dual over $\Field$ by $E^\vee$, define the projective resolution associated to $(E,\tau)$ to be 
			\[P(E)=E^\vee\tensor\Field[[q]],\]
			with differential $\partial_{\textrm{Borel}}=\partial^\vee\tensor\id+(1+\tau)\tensor q.$
			We define the equivariant cohomology associated to $E,\partial,\tau$ to be 
			\[H_{\ZZ/2\ZZ}(E,\partial,\tau)=H^*(P(E),\partial_{\textrm{Borel}}).\]
			When the context is clear we will denote the equivariant cohomology simply by $H_{\ZZ/2\ZZ}(E)$.
			 
		\end{definition}

	\
		
		Given a symplectic manifold $(M,\omega)$, a symplectic involution $\tau:M\to M$ and a pair of Lagrangians $L_0, L_1\subset M$ each preserved by $\tau$  satisfying certain geometric conditions. Except the routine ones used to define Floer cohomology, the most important condition is the "stable normal trivialization" \cite[Definition 18]{SS}, which guarantee that the equivariant transversality can be achieved. Under these geometric conditions they construct a Floer complex $\CFh(L_0,L_1)$ with a $\ZZ/2\ZZ$ action, and from it an equivariant Floer cohomology $\HF_{\ZZ/2\ZZ}(L_0,L_1)$ and a spectral sequence $\HF^*(L_0,L_1)\tensor \Field[[q]]\Rightarrow\HF_{\Z}(L_0,L_1))$. The authors in \cite{SS} used this equivariant cohomology to prove analogous result of Borel equivariant cohomology in symplectic geometry, like a version of Smith inequality \cite[Theorem 1]{SS}. In \cite[Proposition 3.25]{HLS16} the authors proved that the equivariant Floer homology $\HF_{\Z}(L_0,L_1)$ defined above is indeed an invariant under $\Z$-equivariant Hamiltonian isotopy of $L_0$ and $L_1$.
		
		The authors in \cite{HLS16} construct the equivariant Floer homology for data $(M,\omega,L_0,L_1,\tau)$ as above satisfying some looser conditions (\cite[Hypothesis 3.2]{HLS16}). In particular they do not require any conditions related to equivariant transversality(see \cite[Section 3]{HLS16}). The general definition, however, is quite complicated and not easy to compute. But as indicated above, when we can achieve transversality the equivariant Floer homology defined in \cite{SS} and \cite{HLS16} coincides \cite[Proposition 4.3]{HLS16}, which gives us an easier way to describe it. 
		
		In the context of Heegaard Floer homology, given a pointed link $(L,p)$ in $S^3$, the authors in \cite{HLS16} defined an equivariant Heegaard Floer homology $\HFh_{\Z}(\Sigma(L),p)$, which can be computed in the following procedures. We choose a bridge diagram of $L$ on the plane. By lifting the bridge diagram to the branched double along the ends points of the bridges we get a Heegaard diagram $(\Sigma, \alpha, \beta, z)$ associated to $\Sigma(L)$(for more details see Section \ref{diagram}). Due to the specific choice of Heegaard diagram we made, for generic choices of equivariant almost complex structure $J$, we can achieve transversality for all the moduli spaces of holomorphic disks. Thus, we get a chain complex $\CFh(\Sigma(L),p)=\CFh(\Sigma,\alpha,\beta,z;J)$ with an obvious $\ZZ/2\ZZ$ action $\tau$. The equivariant Heegaard Floer homology $\HFh_{\Z}(\Sigma(L),p)$ can be computed as $H_{\Z}(\CFh(\Sigma(L),p), \tau)$(see Definition \ref{borel}).
		
		While the authors in \cite{HLS16} only proved that there is a spectral sequence 
		\[\HFh(\Sigma(L),p)\tensor\Field[q,q^{-1}]\Rightarrow(\Field\oplus\Field)^{\tensor(|L|-1)}\tensor\Field[q,q^{-1}]\] which is an invariant of the pointed link $(L,p)$,  in \cite[Section 6]{KANG18} the author proved that the equivariant Floer homology associated to a pointed knot $(K,p)$ in $S^3$ satisfies naturality and functoriality.
		
		In \cite{HLS16} the authors works in $\Field[q]$ coefficients(so the equivariant Floer homology is calculated by $P'(E)=E^\vee\tensor\Field[q]$, compare Definition \ref{borel}). To simplify our proof we will work in the $\Field[[q]]$ coefficient in our article, so we can use Lemma \ref{alg}. 
		
		Our main theorem is the following.

		\begin{theorem}\label{main}
			Let $(L,p)$ be a link in $S^3$. For unoriented skein moves as in Figure (\ref{skein}), if $p$ is away from a three-ball where the $0$, $1$-resolutions happen(so $p$ can be seen as lying on $L$, $L_0$ and $L_1$), then there exists a chain map $f:\CFh(\Sigma(L),p)\to \CFh(\Sigma(L_0),p)$ and a $\Field[\ZZ/2\ZZ]$-quasi isomorphism
			\[\Cone(f:\CFh(\Sigma(L),p)\to\CFh(\Sigma(L_0),p))\to \CFh(\Sigma(L_1),p)\].
		\end{theorem}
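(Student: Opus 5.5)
We follow the strategy of \cite{OS05}: realize the three double branched covers as the results of $\infty$-, $0$- and $1$-surgery on a framed knot in $\Sigma(L)$, and prove the exact triangle through a triple of Heegaard diagrams. The new requirement is that every map be $\ZZ/2\ZZ$-equivariant on the chain level. The crossing region is a ball $B$ meeting $L$ in two arcs, and its preimage in $\Sigma(L)$ is a solid torus. The three resolutions change only the tangle inside $B$, so they correspond to three Dehn fillings of that solid torus, with slopes pairwise of distance one. We choose a bridge diagram of $L$ in which the crossing is carried by a single bridge that is modified in the three pictures. Lifting it as in Section \ref{diagram} produces a Heegaard multi-diagram $(\Sigma,\balpha,\bbeta,\bbeta',\bbeta'',z)$. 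Here $\bbeta,\bbeta',\bbeta''$ differ only in the one curve lying over the modified bridge, and the remaining curves are small equivariant Hamiltonian translates of one another. By construction the involution $\tau$ of $\Sigma$ preserves all of these curves. Since $p$ lies outside $B$, we may place the basepoint $z$ over $p$ in every diagram.

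Next we define the maps by counting holomorphic triangles and quadrilaterals. Set $f=F_{\alpha\beta\beta'}(\cdot\otimes\theta_{\beta\beta'})$ and $g=F_{\alpha\beta'\beta''}(\cdot\otimes\theta_{\beta'\beta''})$. Let $h$ be the map counting quadrilaterals with corners $\theta_{\beta\beta'}$ and $\theta_{\beta'\beta''}$. The top generators $\theta$ are fixed by $\tau$, because the curves come in $\tau$-invariant configurations and the top generator is unique. As for bigons, generic $\tau$-equivariant almost complex structures should achieve transversality for triangles and quadrilaterals in this special lifted diagram; we would prove this by the same argument that gives bigon transversality, namely that the stable normal trivialization is available for the lifted bridge diagram. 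Given transversality, uniqueness of the moduli counts implies that $f$, $g$ and $h$ commute with $\tau$. The standard degeneration arguments then give $g\circ f+\partial h+h\partial=0$, and $g\circ h+h'\circ f$ is a quasi-isomorphism, where $h'$ is the corresponding quadrilateral map for the next step. The latter comes from the pentagon count in \cite{OS05} together with the area-filtration argument, which shows that the relevant triangle composite is an isomorphism plus lower-order terms.

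By the triangle detection lemma (presumably Lemma \ref{alg}, which is why we work over $\Field[[q]]$), the map $(g,h)\colon\Cone(f)\to\CFh(\Sigma(L_1),p)$ is a quasi-isomorphism. It is $\ZZ/2\ZZ$-equivariant because $g$ and $h$ are, so it is a $\Field[\ZZ/2\ZZ]$-chain map and hence a $\Field[\ZZ/2\ZZ]$-quasi-isomorphism. The algebraic lemma may need to be applied in its $\Field[\ZZ/2\ZZ]$-linear form. That form follows from the ordinary one, since being a quasi-isomorphism can be checked on underlying complexes.

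The main obstacle is equivariant transversality for the moduli spaces of triangles, quadrilaterals and pentagons that contain $\tau$-invariant curves. Index-zero invariant polygons may fail to be cut out transversally, and in that case the counts would not be equivariant. To handle this, we would first try to verify the stable normal trivialization condition of \cite{SS} for the multi-diagram. Failing that, we would perturb the curves so that no nonconstant invariant polygon of index zero exists, using the explicit local model near the lifted bridge. A secondary difficulty is checking that the $\theta$ generators are $\tau$-fixed cycles.
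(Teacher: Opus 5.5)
\textbf{The diagram your argument starts from does not exist.} Your argument needs a single lifted multi-diagram $(\Sigma,\balpha,\bbeta,\bbeta',\bbeta'',z)$ that is both $\tau$-invariant and in Ozsv\'ath--Szab\'o surgery-triangle form, obtained by changing a single bridge. No such diagram exists.
\begin{itemize}
\item Since $\Sigma$ and $\balpha$ are common, the $A$-arcs and the branch set are fixed, and every branch point is an endpoint of exactly one $B$-arc.
\item So a single modified bridge must keep its two endpoints. Then $L$, $L_0$, $L_1$ all pair up the branch points the same way by $A$- and $B$-arcs, and hence have the same number of components.
\item In an unoriented skein triple the component counts are never all equal: $|L_0|=|L_1|=|L|-1$ if the crossing involves two components, and $\{|L_0|,|L_1|\}=\{|L|,|L|+1\}$ otherwise.
\end{itemize}
This is why the paper changes two bridges $B_{g-1},B_g$ (right of Figure~\ref{skeindiagram}). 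On the genus-two piece $\Sigma_{g-1}$ the invariant triple is not in standard form. It reaches standard form only after the handleslides and isotopies of Figure~\ref{handleslide}, and the resulting curves $\tbeta,\tbeta',\tbeta''$ are no longer $\tau$-invariant. So the degeneration, pentagon and area-filtration arguments you cite, which need the explicit local model, cannot be run on an equivariant diagram. You therefore have no equivariant map out of $\Cone(f)$ that is known to be a quasi-isomorphism.

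\textbf{The missing idea is to decouple exactness from equivariance.} The paper proves acyclicity of $\CFh(\alpha,\underline{\tbeta\tbeta'\tbeta''})$ for the standard, non-equivariant diagram and transports it back:
\begin{itemize}
\item Lemma~\ref{lem} makes $\underline{\beta\beta'\beta''}$ a twisted complex.
\item Proposition~\ref{iso} extends the vertical $\Theta$'s to a cycle between twisted complexes.
\item Lemma~\ref{intermediate} supplies an admissible intermediate $\gamma$-diagram.
\item The resulting maps $\Psi_1,\Psi_2$ are quasi-isomorphisms by the column filtration.
\end{itemize}
Hence $\CFh(\alpha,\underline{\beta\beta'\beta''})$ is acyclic, which is exactly the statement that $\Cone(f)\to\CFh(\alpha,\beta'')$ is a quasi-isomorphism.

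Equivariance is handled separately, on the invariant side.
\begin{itemize}
\item Invariant curves necessarily meet at the branch points, so the invariant multi-diagram is never in general position.
\item There are no nonconstant invariant polygons (Lemma~\ref{discrete}), so the worry you raise about index-zero invariant polygons is misplaced.
\item The constant polygons have index zero and are transversely cut out because of the counterclockwise arrangement (Condition~\ref{condition}).
\item Proposition~\ref{same} identifies the low-index moduli spaces of a small non-invariant perturbation with those of the invariant diagram. This is what makes $f$ and the cone map equivariant.
\end{itemize}
Your fallback of perturbing the curves loses invariance unless you prove a transfer statement of this kind.

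Two smaller points. First, the grading-zero cycles $\Theta_{\beta\beta'},\Theta_{\beta'\beta''}$ are not unique at chain level. They must be chosen near the fixed intersection points; their invariance cannot be deduced from uniqueness. Second, Lemma~\ref{alg} is not a triangle detection lemma. It is the step used afterwards to pass to Borel cohomology in Theorem~\ref{triangle}.
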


		\begin{theorem}\label{triangle}
			There is an exact sequence
			\begin{equation}\label{seq}
			\to\HFh_{\ZZ/2\ZZ}(\Sigma(L),p)\to \HFh_{\ZZ/2\ZZ}(\Sigma(L_1),p)\to \HFh_{\ZZ/2\ZZ}(\Sigma(L_0),p)\to\HFh_{\ZZ/2\ZZ}(\Sigma(L),p)\to.
			\end{equation}
		\end{theorem}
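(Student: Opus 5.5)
Theorem \ref{triangle} should follow from Theorem \ref{main} by applying the Borel construction of Definition \ref{borel} to the equivariant quasi-isomorphism and taking the long exact sequence of a short exact sequence of complexes. The plan has three steps.

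\textbf{Step 1: the cone sequence is an equivariant short exact sequence.} Write $A=\CFh(\Sigma(L),p)$, $B=\CFh(\Sigma(L_0),p)$ and $C=\CFh(\Sigma(L_1),p)$. Here $f\colon A\to B$ is the chain map of Theorem \ref{main}, and the quasi-isomorphism $\Cone(f)\to C$ is $\Field[\ZZ/2\ZZ]$-linear. I will first check that $f$ commutes with the involutions $\tau$. This should come out of the construction in Theorem \ref{main}, where $f$ is a $\tau$-equivariant triangle count. Equivariance of $f$ makes $\Cone(f)=A[1]\oplus B$ a complex with a $\ZZ/2\ZZ$-action, and gives a short exact sequence of $\Field[\ZZ/2\ZZ]$-complexes
\[0\to B\to \Cone(f)\to A[1]\to 0.\]

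\textbf{Step 2: apply the Borel construction.} The functor $E\mapsto P(E)=E^\vee\tensor\Field[[q]]$ is exact on finite-dimensional complexes, since dualizing over a field and tensoring with the free module $\Field[[q]]$ are both exact. It also sends equivariant chain maps to chain maps for $\partial_{\textrm{Borel}}$, because $\tau$ commutes with the maps. So we get a short exact sequence
\[0\to P(A[1])\to P(\Cone(f))\to P(B)\to 0,\]
and hence a long exact sequence relating $H_{\Z}(B)$, $H_{\Z}(\Cone(f))$ and $H_{\Z}(A)$.

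\textbf{Step 3: invariance of $H_{\Z}$ under equivariant quasi-isomorphism.} This is the main point. I need that the equivariant quasi-isomorphism $g\colon\Cone(f)\to C$ induces an isomorphism $H_{\Z}(C)\cong H_{\Z}(\Cone(f))$. Filter $P(E)$ by powers of $q$. Because we work over $\Field[[q]]$, this filtration is complete and exhaustive, and its associated graded is $E^\vee\tensor\Field[[q]]$ with differential $\partial^\vee$. The dual $g^\vee$ is a quasi-isomorphism on the associated graded, since dualizing over a field preserves quasi-isomorphisms. A comparison argument for complete filtrations then shows that $g^\vee$ is a quasi-isomorphism $P(C)\to P(\Cone(f))$; this is presumably the content of Lemma \ref{alg}, which I would invoke here. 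One can instead argue via the mapping cone of $g^\vee$: it is acyclic on the associated graded, so it is acyclic by completeness and an inverse-limit argument. Finite-dimensionality gives the Mittag-Leffler condition needed there.

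\textbf{Assembling.} Substituting $H_{\Z}(C)$ for $H_{\Z}(\Cone(f))$ in the long exact sequence of Step 2 gives
\[\cdots\to H_{\Z}(A)\to H_{\Z}(C)\to H_{\Z}(B)\to H_{\Z}(A)\to\cdots,\]
which is \eqref{seq}. The grading shifts are harmless since the sequence is cyclic. The only real obstacle is the completeness argument in Step 3; this is exactly why the paper works with $\Field[[q]]$ rather than $\Field[q]$.
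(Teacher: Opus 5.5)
Your proposal is correct and is essentially the paper's argument. The paper identifies $P(\Cone(f))$ with $\Cone\bigl(f^\vee\tensor\Field[[q]]\colon P(E_0)\to P(E)\bigr)$, which is the same content as your exactness of $P$ on $0\to B\to\Cone(f)\to A[1]\to 0$, and then uses Lemma \ref{alg} (your Step 3 comparison along the $q$-filtration) to replace $P(\Cone(f))$ by $P(\CFh(\Sigma(L_1),p))$ in the resulting long exact sequence.
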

		
		\

		\begin{remark}
			If we assume $|L_0|=|L_1|=|L|+1$, after doing tensor product with $\Field[q^{-1},q]]$, then \cite[Lemma 6.11]{HLS16} gives us an explicit description of the maps $\HFh_{\ZZ/2\ZZ}(\Sigma(L),p)\to \HFh_{\ZZ/2\ZZ}(\Sigma(L_1),p)$ and $\HFh_{\ZZ/2\ZZ}(\Sigma(L_0),p)\to\HFh_{\ZZ/2\ZZ}(\Sigma(L),p)$. Following the same argument in \cite[Lemma 6.11]{HLS16} we can also show that the map $\HFh_{\ZZ/2\ZZ}(\Sigma(L_1),p)\to \HFh_{\ZZ/2\ZZ}(\Sigma(L_0),p)$ is trivial after doing tensor product with $\Field[q^{-1},q]]$.
		\end{remark}
		
		\
		
		Given any positive integer $N$, we can also consider the equivariant Heegaard Floer homology with coefficient $\Field[q]/q^N$, which we will denoted by $\HFheq(\Sigma(L),p;\Field[q]/q^N)$. It is computed by the chain complex $P(\CFh(\Sigma(L),p))\tensor \Field[[q]]/q^N$(see Definition \ref{borel}). By modeling on the constructions in \cite[Section 6]{OS05}, we get the following:
		\begin{theorem}\label{spectral}
			Let $L\subset S^3$ be a link. For each $N\ge1$, there is a spectral sequence whose $E^2$ page is isomorphic to $\widetilde{Kh}(L,p)\tensor \Field[q]/q^N$, the reduced Khovanov homology, and converges to $\HFheq(\Sigma(L),p;\Field[q]/q^N)$.
		\end{theorem}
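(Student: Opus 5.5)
The spectral sequence will come from the Ozsváth–Szabó link surgery cube of \cite[Section 6]{OS05}, built equivariantly with Theorem \ref{main} as the input. Fix a diagram of $L$ with $k$ crossings and choose $p$ away from all crossing balls. For each $I\in\{0,1,\infty\}^k$ let $L_I$ be the resolution, with $\infty$ meaning the original crossing. On the branched double covers this gives a framed link in $\Sigma(L)$ with $k$ components. For the complete resolutions $I\in\{0,1\}^k$, $\Sigma(L_I)$ is a connected sum of copies of $S^1\times S^2$.

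Using the bridge-diagram lifts of Section \ref{diagram}, I will take a single $\tau$-invariant multi-diagram in which each crossing contributes an $\infty/0/1$ triple of curves. For each pair $I<J$ of immediate successors, I define maps $D_{I,J}$ that count holomorphic polygons. By the genericity argument for the specific diagrams, equivariant $J$ achieves transversality, so each $D_{I,J}$ is $\tau$-equivariant on the chain level. The polygon-associativity relations then show that
\[X=\bigoplus_{I\in\{0,1\}^k}\CFh(\Sigma(L_I),p),\qquad D=\sum_{I\le J}D_{I,J}\]
is a chain complex with a $\Z$-action. Iterating Theorem \ref{main} one crossing at a time, as in \cite[Theorem 4.1]{OS05}, gives a $\Field[\Z]$-quasi-isomorphism $X\simeq\CFh(\Sigma(L),p)$. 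This iteration is where I expect the main difficulty. One has to check that the cone quasi-isomorphisms of Theorem \ref{main} are natural enough, that is, built from the same equivariant polygon counts, to be assembled across all $k$ crossings. This amounts to reproving the "cube" version of the exact-triangle lemma with $\tau$-equivariant homotopies. My approach is to make every homotopy in the filtered-cone argument of \cite{OS05} a polygon count on the invariant multi-diagram, so that equivariance is automatic.

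Given the equivariant quasi-isomorphism, apply $P(-)\tensor\Field[[q]]/q^N$. Lemma \ref{alg} should give that an equivariant quasi-isomorphism induces a quasi-isomorphism of Borel complexes over $\Field[[q]]$, and the same then holds mod $q^N$ by a finite filtration argument in $q$. Hence $P(X)\tensor\Field[[q]]/q^N$ computes $\HFheq(\Sigma(L),p;\Field[q]/q^N)$.

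Next, filter $P(X)\tensor\Field[[q]]/q^N$ by the dual cube grading $-|I|$. Both $\partial^\vee$ and $(1+\tau)q$ are non-decreasing in this filtration. The component preserving $|I|$ is $D_{I,I}^\vee\tensor\id+(1+\tau)\tensor q$ on each vertex. At a complete resolution the diagram may be chosen so that $\CFh(\Sigma(L_I),p)$ has zero differential and is generated by the $2^{|L_I|-1}$ top/bottom generators, each fixed by $\tau$. Then $1+\tau=0$ there, so $E^1=\bigoplus_I\HFh(\Sigma(L_I))^\vee\tensor\Field[q]/q^N$. If some vertex does not admit such a diagram, I will instead use the localization spectral sequence, with the fact that the action is trivial on homology, to identify $E^1$ after one further step.

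The $d_1$ differential is induced by the edge maps $D_{I,J}$ for $|J|=|I|+1$, tensored with $\id$ on $\Field[q]/q^N$. By \cite[Section 6]{OS05}, these are the reduced Khovanov merge and split maps, which are self-dual up to the usual identification. Hence $E^2\cong\widetilde{Kh}(L,p)\tensor\Field[q]/q^N$. Since the filtration is finite, the spectral sequence converges to $\HFheq(\Sigma(L),p;\Field[q]/q^N)$.
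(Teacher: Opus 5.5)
Your architecture is the paper's. You use an equivariant multi-diagram lifted from bridge diagrams, a polygon-count cube complex, an $\Field[\ZZ/2\ZZ]$-quasi-isomorphism to $\CFh(\Sigma(L),p)$ built one crossing at a time, and Lemma~\ref{alg} to pass to Borel complexes. The paper's Proposition~\ref{quasi} does the crossing-by-crossing step as you propose: the complex over $\{0,1\}^j\times\{0,1,\infty\}\times\infty^{k-j-1}$ has three-term slices that are acyclic by the proof of Theorem~\ref{main}, so it is acyclic and is the cone of an equivariant map.

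The gap is in the last step. You filter by the cube grading $-|I|$ alone, so your $E^1$ page is $\bigoplus_I H\bigl(P(\CFh(\alpha,\beta_I))\otimes\Field[[q]]/q^N\bigr)$. That is the \emph{Borel} cohomology of each vertex, not $\HFh^*(\alpha,\beta_I)\otimes\Field[q]/q^N$.

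Your first remedy is to take each vertex diagram with zero differential and $\tau$-fixed generators. This is not justified. The vertex complexes are slices of one multi-diagram with common $\alpha$ curves, so they cannot be chosen independently. You give no argument that $\alpha\cap\beta_I$ consists only of fixed points (i.e.\ that the $A$-arcs meet the resolved $B$-arcs only at endpoints) for every $I\in\{0,1\}^k$ at once.

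Your fallback does not work as stated either. Triviality of $\tau_*$ on homology does not make the $q$-spectral sequence collapse. For example, the antipodal action on the cellular chains of $S^1$ is trivial on mod $2$ homology, but its Borel cohomology is that of the quotient circle, hence finite-dimensional. Even if you obtain abstract isomorphisms $\HFheq(\Sigma(L_I),p;\Field[q]/q^N)\cong\HFh(\Sigma(L_I),p)\otimes\Field[q]/q^N$ for the unlinks (say from the localization rank count of \cite{HLS16}), they are not natural. Your $d_1$ is the map induced on Borel cohomology by the edge maps, so it agrees with the Khovanov differential tensored with the identity only modulo $q$. Its homology therefore need not be $\widetilde{Kh}(L,p)\otimes\Field[q]/q^N$.

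The paper avoids this by filtering by the cube grading \emph{together with} the exponent of $q$. Then $(1+\tau)\otimes q$ strictly raises the filtration, so the associated graded sees only the vertex differentials, and $E^1=\bigoplus_{I\in\{0,1\}^k}\HFh^*(\alpha,\beta_I)\otimes\Field[q]/q^N$ on the nose. The differential $d_1$ is the sum of the dual edge maps $F^*_{I<I'}$ tensored with the identity, plus $(1+\tau_*)\otimes q$. The last term vanishes because $\tau$ acts trivially on each $\HFh^*(\alpha,\beta_I)$. That is the only equivariant input needed, and $E^2\cong\widetilde{Kh}(L,p)\otimes\Field[q]/q^N$ then follows from \cite[Section 6]{OS05}.

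Two smaller points you glossed over:
\begin{itemize}
\item Chain-level equivariance of the edge maps needs $\tau$-invariant choices of the $\Theta_{I<I'}$, not just an equivariant $J$.
\item That the cube differential squares to zero uses Proposition~\ref{make}: $\CFh(\beta_I,\beta_{I'})$ has nothing in positive degree, so all higher $\theta_{I<I'}$ can be taken to be $0$.
\end{itemize}
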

		To prove Theorem \ref{triangle} we need an algebraic lemma.
		
			\begin{lemma}\label{alg}
			Given two finite dimensional chain complexes $E$, $E'$ over $\Field$ with $\ZZ/2\ZZ$ actions $\tau$ and $\tau'$ respectively, if $f$ is a  $\ZZ/2\ZZ$-equivariant quasi-isomorphism between $E$ and $E'$, then $f$ induces a quasi-isomorphism between $P(E)$ and $P(E')$.
		\end{lemma}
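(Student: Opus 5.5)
Write $f^\vee\colon E'^\vee\to E^\vee$ for the dual map. Since $f$ is an equivariant chain map, $f^\vee$ commutes with $\partial^\vee$ and with $1+\tau$. So $F=f^\vee\tensor\id\colon P(E')\to P(E)$ intertwines $\partial_{\textrm{Borel}}$, and $F$ is the map meant in the statement. The plan is to show that the cone $C=\Cone(F)$ is acyclic. We have $C=\Cone(f^\vee)\tensor\Field[[q]]$, with differential $\partial_C\tensor\id+(1+\tau_C)\tensor q$, where $\tau_C=\tau'^\vee\oplus\tau^\vee$. Set $D=\Cone(f^\vee)$. It is finite dimensional, and it is acyclic: over a field, dualizing preserves quasi-isomorphisms, so $f^\vee$ is a quasi-isomorphism. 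Thus $C=P$-type complex built from an acyclic finite-dimensional complex $D$ carrying an involution $\sigma=\tau_C$ that commutes with $\partial_D$.

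To show $C$ is acyclic, filter by powers of $q$. Take a cycle $x=\sum_{n\ge0}x_nq^n$ in $C$, with $x_n\in D$. The cycle condition reads $\partial_D x_0=0$ and $\partial_D x_n+(1+\sigma)x_{n-1}=0$ for $n\ge1$. We build $y=\sum y_nq^n$ with $\partial_C y=x$ inductively. We need $\partial_D y_n=x_n+(1+\sigma)y_{n-1}$, taking $y_{-1}=0$. The right side is a $\partial_D$-cycle: applying $\partial_D$ gives $\partial_D x_n+(1+\sigma)\partial_D y_{n-1}=\partial_D x_n+(1+\sigma)(x_{n-1}+(1+\sigma)y_{n-2})$. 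Since $(1+\sigma)^2=0$ in characteristic two, this equals $\partial_D x_n+(1+\sigma)x_{n-1}=0$. Because $D$ is acyclic, the required $y_n$ exists at each step.

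The point needing care is convergence. Each step determines only one coefficient, and the resulting infinite sum $\sum y_nq^n$ is a legitimate element of $D\tensor\Field[[q]]$ precisely because we use power series; this is where the $\Field[[q]]$ coefficients mentioned before the lemma are used. Here $D\tensor\Field[[q]]$ means $D[[q]]$, which is the same thing since $D$ is finite dimensional. With polynomial coefficients the inductive solution need not terminate, which is why the argument is phrased this way. No grading issues arise, because the argument is purely ungraded. Since $C$ is acyclic, the long exact sequence of the cone shows that $F$ induces an isomorphism on $H^*$.

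Alternatively, one could argue by complete filtrations. The $q$-adic filtration on $P(E)$ is complete and exhaustive, and its associated graded map is $f^\vee\tensor\id$ on $E'^\vee\tensor\Field[q]$, which is a quasi-isomorphism. A standard comparison theorem for complete filtrations then gives the conclusion. The direct inductive construction above is essentially the proof of that comparison theorem, so I would present it explicitly.
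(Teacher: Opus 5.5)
Your argument is correct and is essentially the paper's proof. The paper filters $P(E)$ by powers of $q$ and cites the Eilenberg--Moore comparison theorem; your reduction to the acyclic complex $\Cone(f^\vee)\tensor\Field[[q]]$, followed by the inductive choice of the $y_n$, is exactly the proof of that theorem in this case, and the completeness of $\Field[[q]]$ is the essential input in both versions.

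Your caveat about polynomial coefficients reflects a real failure, not just a technicality. Take the acyclic complex $D$ spanned by $a,b$ with $\partial a=b$, and the involution $\sigma(a)=a+b$, $\sigma(b)=b$. Then $D\tensor\Field[q]$ with differential $\partial+(1+\sigma)q$ has homology $\Field[q]/(1+q)\neq 0$; setting $E=0$, $E'=D^\vee$ makes this a counterexample to the lemma. So for ungraded complexes the lemma genuinely needs $\Field[[q]]$, where $1+q$ is a unit.
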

		
		{\bf{Proof}:}
		Consider the filtration on $P(E)$(and $P(E')$) given by the $q$-action, it induces a spectral sequence whose first page is $H^*(E^\vee)\tensor\Field[[q]]$(and $H^*(E'^\vee)\tensor\Field[[q]]$) and the infinity page is $H^*_{\ZZ/2\ZZ}(E)$(and $H^*_{\ZZ/2\ZZ}(E')$). $f$ induces an isomorphism between the first pages, thus also induces an isomorphism on the infinity page by Eilenberg-Moore Comparison Theorem (see \cite[Theorem 5.5.11]{Wei}).
		\qed
		
		\

		{\bf{Proof of Theorem \ref{triangle} with Theorem \ref{main}:}}

		For convenience we omit the basepoint $p$ write the chain complexes $\CFh(\Sigma(L))$, $\CFh(\Sigma(L_0))$, $\CFh(\Sigma(L_1))$ as $E$, $E_0$ and $E_1$, and omit the notations of their $\ZZ/2\ZZ$-actions for simplicity.
		
		By Theorem \ref{main} there is a $\ZZ/2\ZZ$-equivariant chain map $f:E\to E_0$ such that there exists a $\ZZ/2\ZZ$-equivariant quasi-isomorphism $g$ between $\Cone(E\to E_0)$ and $E_1$. Moreover, there is a natural identification between the chain complexes $P((\Cone(f:E\to E_0)))$ and $\Cone(f^*\tensor \Field[[q]]:P(E_0)\to P(E))$, thus by Theorem \ref{main} and Lemma \ref{alg} there is a quasi-isomorphism between $P(E_1)$ and $P(\Cone(f^*\tensor \Field[[q]]:P(E_0)\to P(E))$, and this proves Theorem \ref{main}.
		\qed  
		
		\

		{\bf{Acknowledgements.}} The author would like to thank his advisor Yi Ni for proposing this problem and the interesting discussions in the course of this work. The author would like to thank Gheehyun Nahm for explaining his work \cite{Nah25a} and \cite{Nah15b} to him.

\

		{\bf{Disclosure of computational assistance.}} The author used ChatGPT for proof exploration and
for assistance with typesetting and language clean-up. The author is responsible for verifying the
arguments and citations and for the contents of the paper.

		\section[diagram]{Heegaard diagrams of branched double covers and equivariant Heegaard Floer of the branched double cover}\label{diagram}

\subsection{Definition of equivariant Heegaard Floer homology of branched double cover via bridge diagrams}\label{1.1}

\

\begin{definition}\cite[Section 3]{KANG18}\label{bridge}
	
	\

A based link is a pair $(L,p)$ where $L$ is a link in $S^3$ and
$p\in L$ is a choice of a basepoint. Given a genus $0$ Heegaard surface
$S\subset S^3$, a based link $(L,p)$ is in a \emph{bridge position} with
respect to $\Sigma$ if, for a Heegaard splitting
\[
S^3=H_a\cup_\Sigma H_b,
\]
the connected arcs $\{a_i\}$ and $\{b_i\}$, $1\le i\le g+1$, given by
\[
\bigcup a_i=L\cap H_a,
\qquad
\bigcup b_i=L\cap H_b,
\]
satisfy the following conditions.
\begin{itemize}
	\item There exist disks $D_{a_i}$ and $D_{b_j}$ such that
	\[
	a_i\subset \partial D_{a_i}\subset a_i\cup\Sigma
	\qquad\text{and}\qquad
	b_j\subset \partial D_{b_j}\subset b_j\cup\Sigma.
	\]
	
	\item The disks $D_{a_i}$ and $D_{b_j}$ can be chosen to have
	pairwise disjoint interiors.
	
	\item $p\in L\cap\Sigma$.
\end{itemize}

If
\[
\partial D_{a_i}=a_i\cup A_i
\qquad\text{and}\qquad
\partial D_{b_j}=b_j\cup B_j,
\]
where $A_i$ and $B_j$ are simple arcs on $S$, we say that
$(\{A_i\},\{B_j\})$ is the \emph{bridge diagram} for the based link $(L,p)$.
Given a bridge diagram, by taking the branched double cover of the
whole diagram, with the branching locus given by $L\cap\Sigma$, and
removing the curves which contain the basepoint $p$, gives a Heegaard
diagram
\[
\HD=({\Sigma},\balpha,\bbeta,p)
\]
together with the covering $\mathbb{Z}_2$-action, where the
alpha(beta)-curves are given by the inverse images of the arcs
$A_i(B_i)$.

\end{definition}

The author considers the bridge diagram embedded in $S^3$ for naturality reason, however in our context we may ignore the embedding and focus on the diagram. Note that the Heegaard diagram induced by the bridge diagram depends on basepoint $p$.

Given a bridge diagram of a based link $(L,p)$ in $S^3$ as above,  as well as the induced Heegaard diagram $\HD=(\Sigma,\balpha,\bbeta, p)$, for a generic choice of family of equivariant complex structures $\{J_s\}_{0\le s\le 1}$ satisfying certain conditions necessary to define Heegaard Floer homology(see Definition \ref{complex}), $\{J_s\}_{0\le s\le 1}$ achieves transversality for all homotopy classes of Whitney disks $\phi$ in $(\Sym^g(\Sigma\backslash\{z\}),T_\alpha, T_\beta, z)$ with Maslov index less than or equal to $1$ and $n_z(\phi)=0$ in the sense of Definition \ref{mapcondition}(see Proposition \ref{transversality}). Thus, the differential $\partial_{J_s}$, defined by counting $J$-holomorphic disks with respect to the family of equivariant almost complex structures  $J_s$ as above gives a differential on $\CFh(T_\alpha,T_\beta)$. Since $J_s$ is $\tau$-invariant, $\partial_{J_s}$ is equivariant with respect to the chain map $\tau_\#:\CFh(T_\alpha,T_\beta)\to\CFh(T_\alpha,T_\beta)$ induced by $\tau$. Now the equivariant Heegaard Floer homology associated to $(L,p)$ can be defined as $\HFheq(\Sigma(L))=\HFheq(\CFh(T_\alpha,T_\beta),\partial_{J_s},\tau_*)$(see Definition \ref{borel}).

\subsection{Preparations for the proof of exact triangle}\label{1.2}

\begin{figure}
	\centering
	\includegraphics[width=0.8\textwidth]{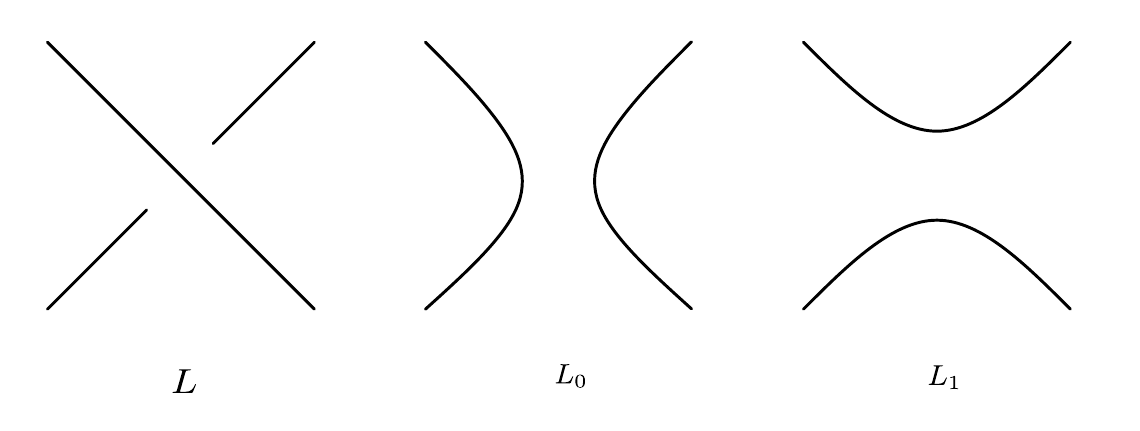}
\caption{}\label{skein}
\end{figure}

\begin{figure}
	\centering
	\begin{subfigure}{0.4\textwidth}
		\centering
\includegraphics[width=\textwidth]{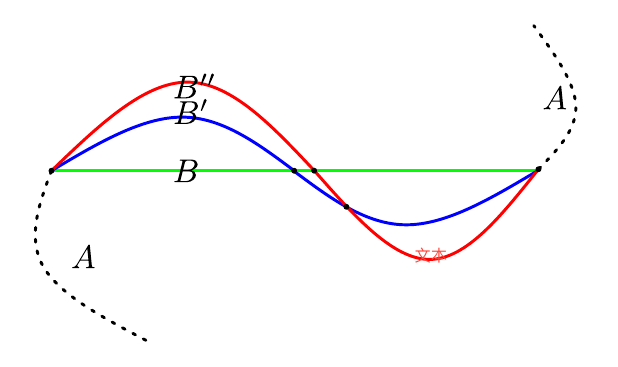}
		\end{subfigure}
		\begin{subfigure}{0.4\textwidth}
			\centering
			   \includegraphics[width=\textwidth]{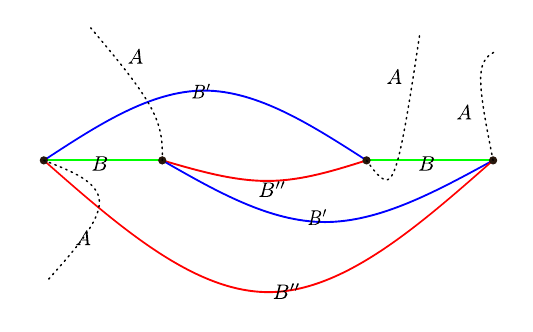}
		\end{subfigure}
			\caption{Skein triangle via bridges. Left: The bridges $B_i,B'_i,B''_i$ for $1\le i \le g-2$. Right: The bridges $B_i,B'_i,B''_i$ for $i=g-1,g$. The $B$-bridges are colored with green, the $B'$-bridges are colored with blue, and the $B''$-bridges are colored with red. The $A$-bridges are in dashed lines.}
			\label{skeindiagram}
\end{figure}

If $L_0$, $L_1$  are the $0,1$-resolutions of $L$ as in Figure (\ref{skein}), and $p$ a basepoint of $L$, $L_0$ and $L_1$ away from the region where the resolution happens, then we can construct bridge diagrams for the pointed links $(L,p)$, $(L_0,p)$ and $(L_1,p)$ as in Definition \ref{bridge} such that:
\begin{enumerate}
	\item[1)]The $A$-arcs for $L$, $L_0$ and $L_1$ are the same, denoted by $A_i$(1$\le i \le g+1$).
	
	\

	\item[2)]If we denote the $B$-arcs of $L$, $L_0$ and $L_1$ by $B_i$, $B_i'$ and $B_i''$ respectively, then there are disjoint disks $D_1\hdots D_{g-1}$ such that for each $1\le i \le g-2$, $B_i$, $B_i'$ and $B_i''$ are contained in $D_i$, and $B_i$, $B_i'$ and $B_i''$ are in the positions indicated by the left of Figure (\ref{skeindiagram}). $B_{g-1}$, $B_{g-1}'$, $B_{g-1}''$, $B_{g}$, $B_{g}'$, $B_{g}''$ are contained in $D_{g-1}$, and are in the position as indicated in the right of Figure (\ref{skeindiagram}). $A_{g+1}$, $B_{g+1}=B'_{g+1}=B''_{g+1}$ are the curves which do not contribute to the Heegaard diagram, and they are contained in $D_{g}$, and they incident at $p$.
	
	\
	
	\item[3)]Any two of the arcs $\{A_i, B_i, B_i', B_i'' (1\le i \le g), A_{g+1}, B_{g+1}=B'_{g+1}=B''_{g+1}\}$ intersect each other transversely except at their endpoints, and any three of them can only have common point at their endpoints. Moreover, at such an incident point $q$, the $A, B, B', B''$-arcs are permuted in a counterclockwise order(as in Figure (\ref{skeindiagram})).
\end{enumerate} 
\

In the following we show that the Heegaard triple $(\Sigma, \beta, \beta, \beta'',z)$ will split as connected sums of $(\Sigma_i,\beta_i,\beta'_i,\beta''_i)$ for $1\le i\le g-2$ and $(\Sigma_{g-1},\{\beta_{g-1},\beta_{g}\},\{\beta'_{g-1},\beta'_{g}\},\{\beta''_{g-1},\beta''_{g}\})$ where $\Sigma_i$ are tori for $1\le i \le g-2$ and $\Sigma_{g-1}$ is a genus $2$ oriented surface without boundary, and the connected is made at the basepoint $z$.

Here we make use of the deleted arcs $B_{g+1}=B'_{g+1}=B''_{g+1}$. For $1\le i\le g-2$ we pick disjoint embedded bands $E_i$ in $S^2\backslash(D_1\cup\hdots D_{g-2})$ connecting the boundary of $D_i$, and intersects the interior of the arc $B_{g+1}=B'_{g+1}=B''_{g+1}$ in a subarc, as shown in Figure (\ref{bands}). For each $1\le i\le g-1$ we denote $\Sigma_i$ to be the union of the preimage of $D_i$ and one connected component of the preimage of $E_i$ under the branched double cover $\Sigma\to S^2$. Then each $\Sigma_i$ is a once-punctured torus for $1\le i\le g-2$, while $\Sigma_{g-1}$ is a once-punctured genus $2$ closed surface, and the complement of the union of $\Sigma_i$ for $1\le i \le g-1$ is a $(g-1)$ times-punctured sphere containing the basepoint $z$. 

\begin{figure}
	\begin{subfigure}{0.4\textwidth}
		\includegraphics[width=\textwidth]{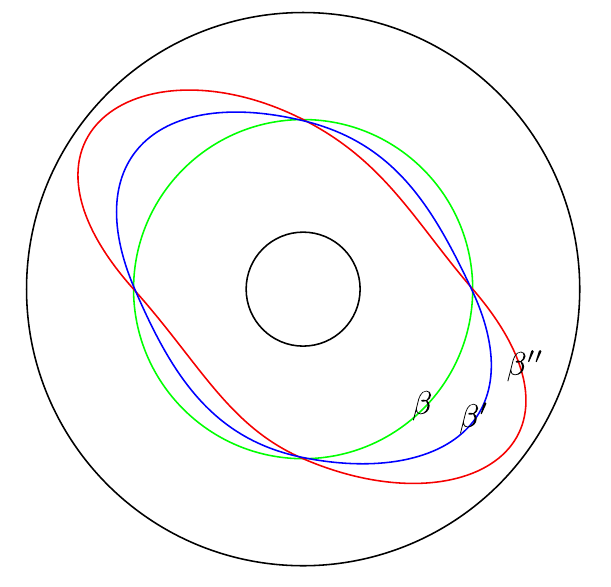}
	\end{subfigure}
	\begin{subfigure}{0.4\textwidth}
	\includegraphics[width=\textwidth]{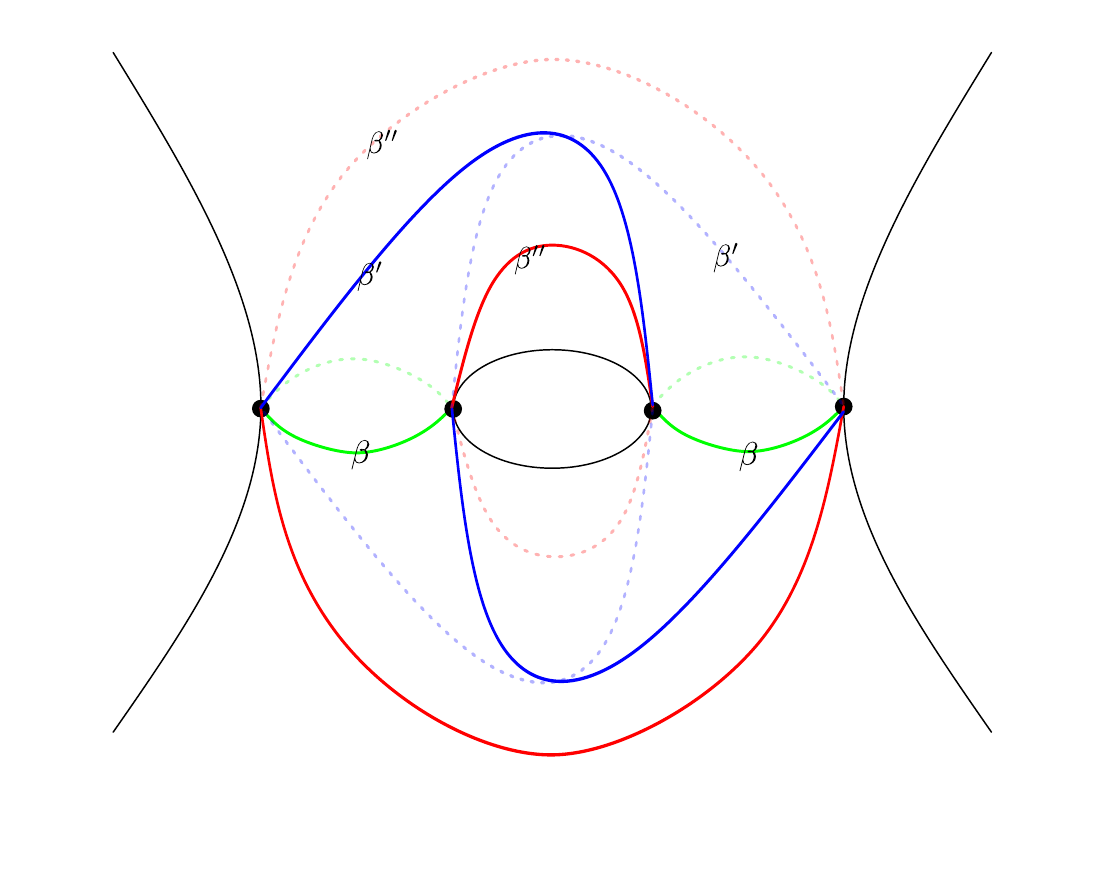}
	\end{subfigure}
	\caption{Left: The branched double cover of left of Figure (\ref{skeindiagram}). Right: The branched double cover of right of Figure (\ref{skeindiagram})}\label{double}
\end{figure}

\begin{figure}
	\includegraphics[width=0.4\textwidth]{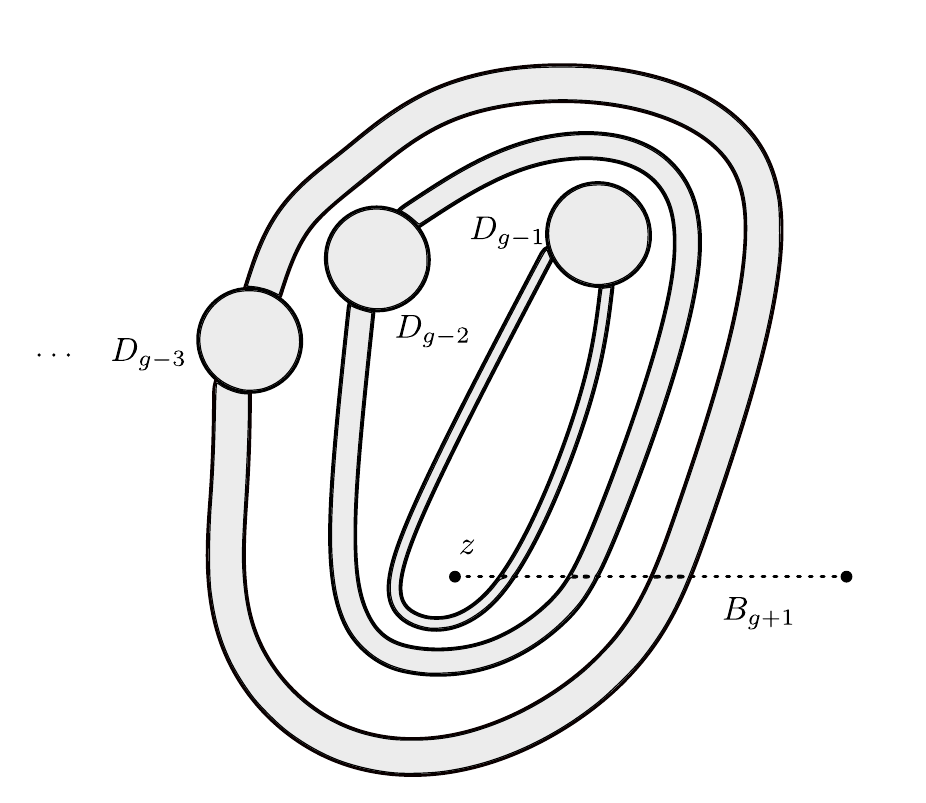}
	\caption{}
	\label{bands}
\end{figure}
To get finite count of holomorphic maps we need admissible conditions on the Heegaard diagrams, in the context of branched double cover of the bridge diagram the weakly admissible condition is automatically achieved, see \cite[Lemma 4.3, 4.3, 4.4]{KANG18}.

\begin{definition}
	By a \emph{Heegaard pre-datum} we mean a tuple $(\Sigma, \bbbeta_1,\hdots, \bbbeta_m,z)$ where $\Sigma$ is an oriented surface of some genus $g$ without boundary, and for each $1\le i\le m$, $\bbbeta_i$ consists of $g$ disjoint simple closed curves on $\Sigma$ whose homology classes are linearly independent in $H_1(\Sigma)$. We require every two curves in $\bbbeta_1\cup\bbbeta_2\hdots\cup\bbbeta_m$ intersect each other transversely, and $z\in \Sigma\backslash(\bbbeta_1\cup\bbbeta_2\hdots\cup\bbbeta_m)$. If we further require that any three different curves in $\bbbeta_1\cup\bbbeta_2\hdots\cup\bbbeta_m$ have no common point, we say that $(\Sigma, \bbbeta_1,\hdots, \bbbeta_m,z)$ is a Heegaard datum.
\end{definition}

\begin{remark}
	Sometimes we don't distinguish between the set $\bbbeta_i$ of $g$ curves on $\Sigma$ and the set $\cup_{\gamma\in\bbbeta_i}\gamma\subset \Sigma$.
\end{remark}

\begin{definition}
	A Heegaard datum is called weakly admissible if every non-trivial periodic domain has both positive and negative coefficients.
\end{definition}

\begin{definition}\label{inv}
	A Heegaard pre-datum $(\Sigma,\{\bbbeta_i\}_{1\le i\le m},z)$ is said to be \emph{pre-involutive} with respect to an orientation-preserving involution $\tau$ if $\tau$ fixes $z$ and every component of $\bbbeta_i$ curves setwise, and $\tau$ restricted to every component of $\bbbeta_i$ is orientation-reversing with two distinct fixed points, and if three curves in $\cup_{i=1}^m \bbbeta_i$ have a common point, then the common point must be a fixed point of $\tau$.
	A Heegaard datum $(\Sigma,\{\bteta_i\}_{1\le i\le m},z)$ is said to be \emph{involutive} with respect to $\tau$ if it is a small perturbation of a
	pre-involutive Heegaard datum $(\Sigma,\{\bbbeta_i\}_{1\le i\le m},z)$ with respect to $\tau$ so that every three curves in $\{\bteta_i\}_{1\le i\le m}$ have no common intersection.
\end{definition}

\begin{proposition}\label{admissible}
	Every involutive Heegaard datum is weakly-admissible.
\end{proposition}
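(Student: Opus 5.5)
We will use the involution to show that every periodic domain is anti-invariant, so its coefficients cancel in pairs. Let $(\Sigma,\{\bteta_i\},z)$ be involutive, a small perturbation of a pre-involutive datum $(\Sigma,\{\bbbeta_i\},z)$ with respect to $\tau$. Since weak admissibility only concerns the group of periodic domains and the signs of their multiplicities, and a small perturbation (which only creates tiny regions near the triple points) does not change $H_2$-type data, we first reduce to the pre-involutive datum. Precisely, a periodic domain $P$ for $\{\bteta_i\}$ corresponds to a $2$-chain $P_0$ for $\{\bbbeta_i\}$ with $n_z(P_0)=0$, whose boundary is a combination of the curves. The multiplicities of $P$ on the large regions agree with those of $P_0$, and the small regions near a triple point have multiplicities interpolating between those of the adjacent large regions. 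So it suffices to show that every nonzero such $P_0$ has both a positive and a negative coefficient on some large region.

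The key step is to compute $\tau_*P_0$. Because $\tau$ preserves each curve $\gamma$ setwise but reverses its orientation, $\tau_*[\gamma]=-[\gamma]$ as a $1$-chain. Hence if $\partial P_0=\sum c_\gamma \gamma$, then $\partial(\tau_*P_0)=-\partial P_0$, and so $\partial(P_0+\tau_*P_0)=0$. Since $\tau$ is orientation preserving, $\tau_*P_0$ is again a $2$-chain with the natural orientation, and $n_z(\tau_*P_0)=n_z(P_0)=0$ because $\tau(z)=z$. Thus $P_0+\tau_*P_0$ is a cycle on the closed connected surface $\Sigma$, hence a multiple of $[\Sigma]$. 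Its multiplicity at $z$ is $0$, so $P_0+\tau_*P_0=0$, that is, $\tau_*P_0=-P_0$.

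It follows that the multiplicity of $P_0$ on a region $R$ is minus its multiplicity on $\tau(R)$. If $P_0\neq0$, some region $R$ has nonzero coefficient $a$, and then $\tau(R)$ has coefficient $-a$, so both signs occur. (Necessarily $\tau(R)\neq R$ in that case; if $\tau(R)=R$ the coefficient is forced to be $0$, which is consistent.) Transferring this back to $P$, the large regions of $P$ carry the same multiplicities as those of $P_0$. So $P$ has both positive and negative coefficients unless $P_0=0$. If $P_0=0$, then $\partial P$ is zero up to perturbation, so all large-region multiplicities vanish, which forces $P=0$.

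The main obstacle is making this perturbation transfer rigorous. We must identify periodic domains of the perturbed datum with those of the pre-involutive one, keeping in mind that the pre-datum regions are not all honest regions of a Heegaard datum when triple points occur. Our first approach is to define periodic domains directly as $2$-chains with boundary in the span of the curves. The perturbation is an isotopy supported near the triple points, which are fixed points of $\tau$, and it induces a bijection on the boundary curve classes. A periodic domain is then determined by $n_z=0$ together with its boundary combination $\sum c_\gamma\gamma$, because the difference of two chains with the same boundary is a multiple of $[\Sigma]$. The argument $\tau_*P=-P$ can therefore be carried out at the level of the boundary coefficients $c_\gamma$, and the sign conclusion is then read off on the large regions, which persist under the perturbation.
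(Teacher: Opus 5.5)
Your proposal is correct. Its core mechanism is the paper's: pair a periodic domain with its $\tau$-image, use that $\tau$ reverses the orientation of every curve to see that the sum has zero boundary, and conclude from $n_z=0$ that the sum vanishes.

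Where you differ is in how the perturbation is handled. The paper stays on the perturbed diagram throughout:
\begin{itemize}
\item it constructs $\tau_*(D)$ there by representing $D$ by a map $\Phi\colon F\to\Sigma$, composing with $\tau$, and gluing on collars given by isotopies carrying the $\tau$-images of the perturbed curves back to the perturbed curves;
\item it proves $\partial(D+\tau_*D)=0$ by a local count of the $2n$ large regions around each fixed point, using $a_i=a_{i+n}$;
\item it then argues by contradiction from a non-negative domain.
\end{itemize}
You instead descend to the pre-involutive datum, where $\tau$ honestly permutes regions. There you get $\partial(\tau_*P_0)=-\partial P_0$ in one line and the stronger anti-invariance $\tau_*P_0=-P_0$ for every periodic domain, which produces both signs at once. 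You never need the local picture at the fixed points, only that $\tau$ fixes $z$ and reverses each curve. This avoids the paper's auxiliary surface construction.

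The cost is the transfer step, which you rightly flagged. It closes as follows:
\begin{itemize}
\item $P_0$ exists because $\sum c_\gamma\gamma$ is homologous to $\sum c_\gamma\tilde\gamma=\partial P$.
\item For a point $x$ outside the perturbation disks, both $n_x(P)$ and $n_x(P_0)$ equal $n_z$ plus the algebraic intersection of a path from $z$ to $x$ with the respective boundary.
\item Since the disks are small, the path can be chosen to avoid them. There $\tilde\gamma=\gamma$, so $n_x(P)=n_x(P_0)$.
\end{itemize}

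Drop your side remark that small regions have multiplicities ``interpolating'' between those of their large neighbours. It is not needed, and it is false in general: a small triangle created at a triple point can carry a multiplicity outside the range of the six surrounding regions.
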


{\bf{Proof:}}

In \cite[Lemma 4.2, 4.3, 4.4]{KANG18} the author only proved this proposition for $m=2,3,4$. We prove here the general case.

Given a pre-involutive Heegaard datum $(\Sigma,\{\bbbeta_i\}_{1\le i\le m},z)$ and its small perturbation $(\Sigma,\{\bbbeta'_i\}_{1\le i\le m},z)$ near the fixed point sets, we call the connected components of  $\Sigma\backslash\cup^m_{i=1}\bbbeta_i$ supported in the perturbation areas small domains, while the other components large domains. Given a periodic domain $D$ of $(\Sigma,\{\bbbeta_i'\}_{1\le i\le m},z)$, we will define $\tau_*(D)$ to be the periodic domain whose coefficients on the large domains are induced from $D$ via the obvious action of $\tau$ on the large domains. Obviously $\tau_*(D)$ is unique if exists.

For the existence, by \cite[Lemma 2.17]{OS04a}, we represent $D$ by a map $\Phi:F\to \Sigma$, where $F$ is a oriented  surface with boundary, $\Phi$ is not necessarily orientation-preserving, and $\Phi$ maps each boundary components diffeomorphically to some $\bbbeta_i'$ curve. Consider the Heegaard datum $(\Sigma,\{\bbbeta''_i\}_{1\le i\le m})$, where $\bbbeta''_i=\tau(\bbbeta'_i)$. For each $1\le i\le m$, there is an ambient isotopy $\{\varphi^i_t\}$ supported in a neighborhood of $\bbbeta_i$ taking $\bbbeta''_i$ to $\bbbeta'_i$. Consider a surface
\[F'=(\bigcup_{\mathclap{\substack{C\subset F\\ \textrm{is a boundary component}}}}C\times [0,1]\bigcup F)/C\times\{0\}\sim C\]
as well as a map $\Phi':F'\to \Sigma$ such that on $F\subset F'$, $\Phi'$ is given by $\tau\circ\Phi$, while on the $C\times[0,1]$ components $F'$ is given by $\varphi^i_{t}\circ\tau\circ\Phi|_C$ if $\Phi(C)\subset\bbbeta_i$. The map $\Phi'$ gives the domain $\tau_*(D)$ we desired.

Now if $D$ is a non-zero periodic domain which is non-negative, consider $D'=D+\tau_*(D)$, whose coefficient on the large domains are non-negative and invariant under $\tau$. Near a fixed point, denote the large domains surrounding the perturbation area by $D_1, D_2, \hdots, D_{2n}$ in the counterclockwise order, so that $D_i, D_{i+1}$ have common boundary in $\xi_{k_i}\in\bbbeta_{k_i}$(the index is understood modulo $n$). Denote the coefficients of $D'$ on $D_1, D_2, \hdots, D_{2n}$ by $a_1, a_2,\hdots, a_{2n}$, then by the $\tau$-invariance $a_i=a_{i+n}$, consider the multiplicity on $\xi_{k_i}$ we get $a_i-a_{i+1}=a_{i+n+1}-a_{i+n}$, thus $a_i=a_{i+1}$, that is the multiplication of $\partial D$ over $\xi_{k_i}$ is $0$ for $1\le i\le m$. Now by Definition \ref{inv} every component of every $\bbbeta_{i}$ curves has fixed points, so running the arguments above we know the multiplication of $\partial D$ is the zero $1$-chain. Now by the connectivity of the complement of small regions, all coefficients of the large domain of $D'$ are the same, thus $0$, and this implies that $D$ is a trivial domain, a contraction.
\qed
		\section[equivariant transversality and perturbations]{equivariant transversality}

In this section we will denote $(\Sigma,\{\bbbeta_i\}_{1\le i\le m},z)$ a pre-involutive Heegaard datum with respect to an involution $\tau$, where $\Sigma$ is an oriented surface of genus $g$. Denote $\bold{p}$ the set of fixed points of $\tau$. Note that $z\in \bold{p}$ as required in Definition \ref{inv}. Let $(\Sigma,\{\bbbeta'_i\}_{1\le i\le m},z)$ be an involutive Heegaard datum which is a small perturbation of the former. We require the following local condition at $\bold{p}$:

\begin{condition}\label{condition}
	At each fixed point $q\in \bold{p}$, those $\bbbeta_1, \hdots, \bbbeta_m$ which contain $q$ are arranged in counterclockwise order.
\end{condition}

First we investigate the fixed subset of $\Sym^{g}(\Sigma)$ under the induced $\ZZ/2\ZZ$-action $\tau$. Following \cite{HLS16} we consider the subset of $\Fix(\tau)$:
\[D=\{S\subset \bold{p}| |S|=g\}\subset \Fix(\tau)\subset\Sym^g(\Sigma),\]we have the following lemma:
\begin{lemma}\cite[Lemma 6.1]{HLS16}\label{discrete}
	\
	
	$D$ is discrete in $\Fix(\tau)$,
	and for each $1\le i \le m$, the $\tau$-fixed parts of $T_{\bbbeta_i}$ satisfies
	$T_{\bbbeta_i}^\fix\subset D$. In particular,
    $T_{\bbbeta_i}^\fix$ are finite sets of
	points.
\end{lemma}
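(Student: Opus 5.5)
We prove the two assertions separately: first that $D$ is discrete in $\Fix(\tau)$, then that $T^\fix_{\bbbeta_i}\subset D$.

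\textbf{Structure of $\Fix(\tau)$.} Identify $\Sym^g(\Sigma)$ with effective divisors of degree $g$. A divisor $x=\sum n_j x_j$ is $\tau$-fixed exactly when its multiset of points is $\tau$-invariant. So $x$ decomposes as a divisor supported on $\bold p$ plus a sum of pairs $y+\tau(y)$ with $y\notin\bold p$. The points of $\bold p$ are isolated, since $\tau$ is an orientation-preserving involution of a surface. Hence the components of $\Fix(\tau)$ are indexed by the multiplicities at the points of $\bold p$ together with the number $k$ of free pairs. Such a component is locally modeled on $\Sym^k(\Sigma/\tau)$, possibly with pairs allowed to collide into $\bold p$.

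\textbf{Discreteness of $D$.} Take $S\in D$: its $g$ points are distinct and lie in $\bold p$, with no free pairs. Any nearby $\tau$-invariant divisor must have its points near the points of $S$. Near a fixed point $q$, a multiplicity-one point of an invariant divisor must itself be fixed, because otherwise its $\tau$-partner would be a second point near $q$, and multiplicity one forbids this. The only fixed point near $q$ is $q$. Therefore a neighborhood of $S$ in $\Fix(\tau)$ is just $\{S\}$, and $D$ is discrete.

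\textbf{$T^\fix_{\bbbeta_i}\subset D$.} Write $\bbbeta_i=\{\eta_1,\dots,\eta_g\}$, so that $T_{\bbbeta_i}=\eta_1\times\dots\times\eta_g$. The curves are disjoint, so a point of $T_{\bbbeta_i}$ is $\{x_1,\dots,x_g\}$ with $x_j\in\eta_j$, and all $x_j$ are distinct. By pre-involutivity, $\tau$ preserves each $\eta_j$ setwise, so $\tau(x_j)\in\eta_j$. Thus $\tau$-invariance of the set forces $\tau(x_j)=x_j$ for every $j$: no other point of the set lies on $\eta_j$. Again by pre-involutivity, $\tau|_{\eta_j}$ is an orientation-reversing involution with exactly two fixed points, both in $\bold p$. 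So each $x_j\in\bold p$, the points are distinct, and $x\in D$.

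\textbf{Finiteness.} The argument above shows $T^\fix_{\bbbeta_i}$ has exactly $2^g$ points. Alternatively, $D$ is finite because $\bold p$ is finite.

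The only delicate step is the local description of $\Fix(\tau)$ near points of $D$. It is handled by the elementary multiplicity argument above rather than any smooth-structure computation.
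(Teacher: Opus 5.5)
Your proof is correct. The paper gives no argument of its own for this lemma; it simply imports it from \cite[Lemma 6.1]{HLS16}. Your write-up therefore supplies the missing verification. It also does so in the generality the paper actually states, namely an arbitrary pre-involutive datum rather than only the covering involution of a branched double cover of $S^2$. This works because you use only that $\mathbf{p}$ is finite, together with the pre-involutive conditions: each curve is preserved, each has two fixed points, and the curves within one $\bbbeta_i$ are disjoint.

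Two points of wording are worth fixing.

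First, in the discreteness step, the sentence ``a multiplicity-one point of an invariant divisor must itself be fixed'' is false as a global statement: in $y+\tau(y)$, both points have multiplicity one and neither is fixed. The argument actually needs the following. Choose pairwise disjoint $\tau$-invariant neighborhoods $U_j$ of the points $q_j$ of $S$, each containing no other point of $\mathbf{p}$; for instance, replace a neighborhood $U$ by $U\cap\tau(U)$. A divisor close to $S$ has exactly one point in each $U_j$. Invariance under $\tau$ then forces that point to be fixed, hence equal to $q_j$. This shows each point of $D$ is an isolated point of $\Fix(\tau)$, which is what is needed.

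Second, your structural paragraph is inaccurate, although you never use it. The multiplicities at points of $\mathbf{p}$ are not constant on components of $\Fix(\tau)$, because a pair $y+\tau(y)$ can converge to $2q$ within $\Fix(\tau)$. Only the set $E\subset\mathbf{p}$ of points with odd multiplicity is locally constant. The component with invariant $E$ is a copy of $\Sym^{(g-|E|)/2}(\Sigma/\tau)$, via $\bar y\mapsto E+\pi^*\bar y$. With this correction, $D$ is exactly the union of the zero-dimensional components, those with $|E|=g$. That gives an alternative, global proof of discreteness.

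Your argument that $T^{\fix}_{\bbbeta_i}\subset D$, and your count of exactly $2^g$ points, are fine as written.
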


Let $D_n$ denote a disk with $n$ labeled punctures on its boundary (a polygon). Label the arcs in $\partial D_n$ as $e_1,...,e_n$ in counterclockwise order, and let $p_{i,i+1}$ denote the puncture between $e_i$ and $e_{i+1}$. Let $\Conf(D_n)$ denote the moduli space of (positively oriented) complex structures on $D_n$, up to biholomorphism respecting the labeling of the edges.(For $n\ge 3$, $\Conf(D_n)$ is an $(n-3)$-dimensional ball.) The space $\Conf(D_n)$ has a natural Deligne-Mumford compactification $\overline{\Conf(D_n)}$, which is diﬀeomorphic to the associahedron(see, e.g., \cite{DS99}). The boundary $\overline{\Conf(D_n)}\backslash\Conf(D_n)$ of $\Conf(D_n)$ consists of trees of polygons.

Specifically $D_2$ is the disk with two punctures on the boundary, and we identify it bi holomorphicly with $[0,1]\times \mathbb{R}$. For each $n\ge3$ we say that $p_{n,1}$ is an outgoing point while the others are incoming points. To set up the Fukaya category we want to work with we need to identify a neighborhood of each positive puncture biholomorphicly with $Z_+=[0,1]\times[0,+\infty)$, and a neighborhood of each negative point biholomorphicly with $Z_-=[0,1]\times(-\infty,0]$. The identifications must satisfy some consistency conditions in terms of  \cite[Section 9(c)]{Sei08} and \cite[Section 9(g)]{Sei08}, so that we have a parametrization of neighborhoods of boundary strata of $\overline{\Conf(D_n)}$ in terms of lower moduli spaces and some gluing parameters.

\begin{definition}\label{complex}
	Fix a K\"{a}hler structure $(\mathfrak{j}, \eta)$ over $\Sigma$, a finite collection of points $\{z_i\}_{i=1}^m \subset\Sigma\backslash\cup_{i=1}^m\bbbeta_i$ such that each connected component of $\Sigma\backslash\cup_{i=1}^m\bbbeta_i$ contains at least one $z_i$, and an open set $V$ with \[D\cup\{z_i\}^m_{i=1}\times \Sym^{g-1}(\Sigma)\cup\Delta\subset V\subset \Sym^g (\Sigma),\]
	where $D$ is the discrete part of the fixed point of $\tau$ in $\Sym^g(\Sigma)$, and $\Delta\subset\Sym^g(\Sigma)$ is the fat diagonal.
An admissible collection of almost-complex structures up to $m$ consists of

$\bullet$ a choice of a smooth  family $\{J^2_q\}_{q\in D_2}$ of $(\mathfrak{j},\eta,V)$-nearly symmetric(in the sense of \cite[Definition 3.1]{OS04a}) almost complex structures  on $\Sym^g(\Sigma)$ parameterized by $[0,1]$(or equivalently an $\mathbb{R}$-invariant family parameterized by $D_2$).

$\bullet$ a choice of a smooth family $\{J^n_{(j,q)}\}_{(j,q)\in\Conf(D_n)\times D_n}$ of $(\mathfrak{j},\eta,V)$-nearly symmetric(in the sense of \cite[Definition 3.1]{OS04a}) almost-complex structures on $\Sym^g(\Sigma)$ parameterized by $\Conf(D_n)\times D_n$ for each $n\ge3$, satisfying the following conditions:
\begin{enumerate}
	\item[(J-1)] For $n\ge 3$, each almost-complex structure $\{J^n_{(j,q)}\}_{(j,q)\in\Conf(D_n)\times D_n}$ agrees with $J^2$ near the punctures of $D_n$, in the sense that for each $j\in\Conf(D_n)$, on a neighborhood $U$ of a positive puncture and the identification $U\isom Z_+$ specified before the definition, $J^n_{(j,(s,t))}=J^2_{s}$. The condition for the negative punctures is similar.
	\item[(J-2)]Suppose that $\{j_\alpha\}\subset\Conf(D_n)$ is a sequence converging to a point $j_\infty\in \partial\Conf(D_n)$.
	For notational simplicity, suppose $j_\infty$ lies in the codimension-one boundary, and hence corresponds to a point $(j_{\infty,1},j_{\infty,2})\in\Conf(D_{m+1})\times\Conf(D_{n-m+1})$. Then the complex structures $J_{(j_\alpha,\cdot)}$ (as families parameterized over $D_n$) are required to converge to the complex structure $J_{j_\infty,1}\sqcup J_{j_\infty,2}$(as a family parameterized by $D_{m+1}\sqcup D_{n-m+1}$).
	(Convergence of the $J_{j_\alpha}$ means the following. As $\alpha\to\infty$, certain arcs in $D_{m+1}$ collapse. Over neighborhoods of these arcs, the complex structures $J_{j_\alpha}$ should be obtained by inserting longer and longer necks(as in \cite[Section 3.4]{BEH03}). Outside these neighborhoods, we require convergence in the $C^\infty$-topology.)
	We require the analogous compatibility condition for the higher-codimension boundary of $\Conf(D_n)$, as well.
	
	We denote the space of all admissible collection of almost-complex structures up to $m$ by $\mathcal{J}_m$. If a symplectic involution $\tau$ preserving all Heegaard tori , we denote $\mathcal{J}_m^\tau\subset\mathcal{J}_m$ the $\tau$-invariant part.
\end{enumerate}
\end{definition}

\begin{remark}
	
	\begin{enumerate}
		\item[1)]We didn't elaborate some of the condition carefully here but refer the reader to \cite[Section 9]{Sei08} for more details. Condition (J-1) and (J-2) are essentially the consistency condition of perturbation data as defined in \cite[Section 9(i)]{Sei08}. 
		\item[2)]In \cite[Lemma 3.14]{OS04a} the authors required the almost complex structures to lie in a small neighborhood of $\Sym({\mathfrak{j}})$ of some specific $\mathfrak{j}$ to avoid bubbling and boundary degeneration of holomorphic curves, we drop this requirement in our context since we will always work in the hat version, where all holomorphic curves satisfy $n_z=0$. 
	\end{enumerate}
\end{remark}

\begin{definition}\label{mapcondition}
	For $1\le i_1<i_2<\hdots<i_n\le m$, $x_{l,l+1}\in T_{\bbbeta_{i_l}}\cap T_{\bbbeta_{i_{l+1}}}$ for $1\le l\le n$, for any Whitney $n$-gon $\phi\in\pi_2(x_{1,2},\hdots, x_{n-1,n}, x_{n,1})$, we denote the space $\mathcal{B}^{k,p}_\delta(\phi)$ the space of $W^{k,p}$ maps  $u$ from $D_n$ into $\Sym^g(\Sigma)$ such that
	\begin{enumerate}
		 \item[1)]$u(e_l)\subset T_{i_l}$ for each $1\le l\le n$.
		 \item[2)]$u$ extends continuously to the unit disk by mapping $p_{l,l+1}$ to $x_{l,l+1}$, and the extension is homotopic to $\phi$.
		 \item[3)]The restrictiosn of $u$ near the punctures lie in some weighted Sobolev space $W^{k,p}_\delta$(see \cite[Section 3.2]{OS04a}). 
		 \end{enumerate}

		 Fix an admissible collection of almost-complex structures $\{J^n\}$, and $\phi$ as above, define the moduli space $\mathcal{M}_{\{J^n\}}(\phi)$ to be the set of all pairs $(u,j)\in \mathcal{B}^{k,p}_\delta\times\Conf(D_n)$ such that $u$ is a $(j,J^n)$ holomorphic curve. For every such $(u,j)$, we have a linear map \[D\overline{\partial}_{u,j}:T_u\mathcal{B}^{k,p}_\delta\oplus T_j\Conf(D_n)\to W^{k,p}_\delta(u^*T\Sym^g(\Sigma)\tensor_{J^n,j}\wedge^{0,1}T^*D_n)\]($n=2$ is a special case where we omit the conformal structure factors in the source and target).
		 We say that $\mathcal{M}_{J^n}(\phi)$ is transversely cut out if for all $(u,j)\in \mathcal{M}_{\{J^n\}}(\phi)$, the map $D{\overline{\partial}}_{u,j}$ is surjective.
		 We say $\{J^n\}$ achieves transversality for the Heegaard pre-datum $(\Sigma,\{\bbbeta_i\}_{1\le i\le m},z)$ if for any choices $n\le m$, $1\le i_1<\hdots<i_n\le m$, $x_{l,l+1}\in T_{\bbbeta_{i_l}}\cap T_{\bbbeta_{i_{l+1}}}$ for $1\le l\le n$, for any Whitney $n$-gon $\phi\in\pi_2(x_{1,2}\hdots, x_{n-1,n}, x_{n,1})$ such that $\mu(\phi)\le 4-n$, $n_z(\phi)=0$, the space $\mathcal{M}_{J^n}(\phi)$ is transversely cut out.
\end{definition}

\begin{remark}
	Sometimes we omit the subscripts and superscripts in the notations $\mathcal{M}_{J^n}(\phi)$ and $\mathcal{B}^{k,p}_\delta$ when they are clear in the context.
\end{remark}

\begin{proposition}\label{transversality}
	
	Generic choices of $\{J^n\}_{1\le n\le m}\in\mathcal{J}^\tau_m$ achieve transversality for the Heegaard datum $(\Sigma,\{\bbbeta_i\}_{1\le i\le m},z)$.
\end{proposition}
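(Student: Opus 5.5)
The plan is to follow the strategy of Khovanov--Seidel and of \cite[Section 6]{HLS16}: equivariant transversality fails only for curves lying entirely in the fixed locus, and then to rule such curves out. Let $(u,j)\in\mathcal{M}_{\{J^n\}}(\phi)$ with $n_z(\phi)=0$ and $\mu(\phi)\le 4-n$. There are two cases, according to whether the image of $u$ is contained in $\Fix(\tau)$ or not.

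\emph{Non-fixed curves.} Suppose $u$ is not contained in $\Fix(\tau)$. We would show that there is a point $q\in D_n$ such that $u(q)\notin\Fix(\tau)\cup V$ and $u$ is somewhere injective there. Since $\Fix(\tau)$ is a closed submanifold, the set of such $q$ is open and nonempty; away from $\Delta$ the map is injective near $u(q)$, by unique continuation and the usual arguments for nearly symmetric structures in \cite[Section 3]{OS04a}. We then vary $J^n$ $\tau$-invariantly in a small neighborhood $W$ of $u(q)$ with $W\cap\tau(W)=\emptyset$. Prescribing the perturbation on $W$ and its conjugate on $\tau(W)$ gives arbitrary perturbations on $W$, and these are compatible with (J-1) and (J-2) if the support is taken in the interior of $\Conf(D_n)\times D_n$. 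Hence the universal moduli space over $\mathcal{J}^\tau_m$ is a Banach manifold near such $u$, and Sard--Smale gives a residual set of $\{J^n\}$ for which these moduli spaces are cut out transversely. Induction on $n$ handles the consistency conditions along boundary strata, as in \cite[Section 9]{Sei08}.

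\emph{Fixed curves.} This is the main obstacle. Suppose $u(D_n)\subset\Fix(\tau)$. The corners $x_{l,l+1}$ are then $\tau$-fixed intersection points, so by Lemma \ref{discrete} they lie in $T^\fix_{\bbbeta_{i_l}}\subset D$, which is discrete in $\Fix(\tau)$. The boundary arcs $u(e_l)$ lie in $T_{\bbbeta_{i_l}}^\fix$, which is a finite set, so each $u(e_l)$ is a single point. Consequently all corners coincide and $u$ is a holomorphic map of a disk with boundary mapped to one point. Its energy, computed from $\omega$ (or from the domain, which is a multiple of $[\Sigma]$ minus nothing), is then zero; alternatively one uses $n_z(\phi)=0$ with $z\in\bold p$ fixed. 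Either way $u$ is constant.

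It remains to check that constant maps are regular in the relevant indices. For $n=2$, the constant strip has $\mu=0$ and is excluded from the differential's count, since it is $\mathbb R$-invariant and trivially cut out: the linearized operator at a transverse intersection point is surjective. For $n\ge3$, a constant polygon requires the tori $T_{\bbbeta_{i_1}},\dots,T_{\bbbeta_{i_n}}$ to share a common point. But the datum is a perturbation in which no three curves meet, so the point would have to be common to $n\ge3$ tori. This is impossible when the curves are perturbed to $\bteta$ with no triple points. If instead the argument must be run for the pre-datum, Condition \ref{condition} (the counterclockwise ordering) is what makes the constant polygon at a fixed point have a definite negative index, or makes its linearization surjective via the splitting of $T\Sym^g(\Sigma)$ into $\tau$-invariant and anti-invariant parts, as in \cite[Lemma 6.2]{HLS16}. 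I expect verifying this local index or surjectivity computation, by reducing to a product of one-dimensional problems at each fixed point of $\bold p$, to be the delicate step.
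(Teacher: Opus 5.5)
\textbf{Where the proposal matches the paper.} Your reduction agrees with the paper's. Curves not contained in $\Fix(\tau)$ are handled by the Khovanov--Seidel/HLS equivariant perturbation argument. A curve inside $\Fix(\tau)$ maps each edge into a finite set, so it is constant. Only your $n_z(\phi)=0$ route works for this last step: a disk with collapsed boundary is a sphere, whose $\omega$-energy need not vanish, but $n_z=0$ kills the multiple of $[\Sigma]$ and the energy estimate of \cite{OS04a} finishes.

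\textbf{The gap: constant polygons with $n\ge 3$.} The proposition concerns the pre-involutive datum fixed at the start of the section. That is the only $\tau$-invariant datum, hence the only one for which $\mathcal{J}^\tau_m$ and your equivariant argument make sense. In it, curves may meet in triples (or more) at points of $\mathbf p$. So your main argument, ``impossible when the curves are perturbed to have no triple points'', addresses the wrong datum. Constant $n$-gons at points $x\in D$ genuinely occur: each coordinate of $x$ lies on $n\ge 3$ curves, hence is fixed. They are exactly the constant components that appear in the Gromov limits in the proof of Proposition \ref{same}. Your fallback, that Condition \ref{condition} gives them a ``definite negative index'', would be fatal rather than helpful. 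A constant map is $J$-holomorphic for every $J$, so no genericity removes it, and a constant polygon of negative index is a permanently non-regular point of a moduli space with $\mu\le 4-n$.

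\textbf{The missing idea.} This is what the paper uses. Because $D\subset V$ and the structures are $(\mathfrak j,\eta,V)$-nearly symmetric, $J^n=\Sym(\mathfrak j)$ near $x$. Hence the linearized operator at the constant polygon splits as a direct sum over the $g$ coordinates $x_k$. Each summand is a $\bar\partial$-operator on $D_n$ with boundary on the $n$ tangent lines, in $T_{x_k}\Sigma\cong\mathbb C$, of the curves through $x_k$.
\begin{itemize}
\item The $T_j\Conf(D_n)$ directions contribute nothing at a constant map, since the term involving $du$ vanishes. So surjectivity rests entirely on these one-dimensional operators, which have trivial kernel.
\item The counterclockwise arrangement of Condition \ref{condition} is precisely what makes each summand have index $0$, hence be an isomorphism. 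Other cyclic orderings give negative index and therefore a cokernel.
\item Consequently constant polygons have $\mu=0$, are transversely cut out, and only matter for $n\le 4$.
\end{itemize}
The relevant splitting is this product splitting of $T_x\Sym^g(\Sigma)$, not the $\tau$-invariant/anti-invariant one. You mention the product reduction at the end, but leave the index and kernel computation undone, and that computation is the actual content of this case.
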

	{\bf{Proof}}:
	
	\
	For a non-constant homotopy class $\phi\in\pi_2(x_{1,2},\hdots, x_{n,1})$, any holomorphic $n$-gon $u\in\mathcal{M}(\phi)$ is not contained in $\Fix(\tau)$ by Lemma \ref{discrete}, thus generic elements in $\mathcal{J}^\tau_m$ achieves transversality for the moduli space $\mathcal{M}(\phi)$ by the same argument as in \cite[Prop 5.13]{KS02}, \cite[Prop13.5]{LIP06}, \cite[Proof of Corollary 1.12(Page 1183)]{HLS16}. If $\phi$ is constant, then any holomorphic curves representing $\phi$ must also be constant by the energy estimation \cite[Equation (7)]{OS04a}. The transversality of the constant curve $u$ is guaranteed by the local arrangement Condition \ref{condition} and that the almost complex structures are fixed near $D$(guaranteed by the $(\mathfrak{j},\eta,V)$-nearly symmetric condition in \ref{complex})(Actually constant curves have Maslov index equal to $0$ by Condition \ref{condition}, thus if a constant curve with Maslov index less or equal than $4-n$ appears, then $4-n\ge 0$, which means we only need to consider constant curves when $n=2,3,4$).
	\qed

    \begin{proposition}\label{same}
    	
    	Fix a choice of $\{J^n\}_{1\le n\le m}\in \mathcal{J}^\tau_m$ satisfying the conclusion of Lemma \ref{transversality}, then for any sufficiently small perturbations $\{\bbbeta'_i\}_{1\le i\le m}$ of ${\{\bbbeta_i\}}_{1\le i\le m}$, $\{{J^n}\}_{1\le n\le m}$ achieves transversality for the Heegaard datum $(\Sigma,{\{\bbbeta'_i\}}_{1\le i\le m},z)$. Furthermore, for every $1\le i_1<\hdots<i_n\le m$, $x_{l,l+1}\in T_{\bbbeta_{i_l}}\cap T_{\bbbeta_{i_{l+1}}}$ for $1\le l\le n$, for any Whitney $n$-gon {$\phi'\in\pi_2(x_{1,2}\hdots, x_{n-1,n}, \\x_{n,1})$} with $\mu(\phi)\le3-n, n_z(\phi)=0$, if we denote $\phi'$ the corresponding homotopy class with respect to the perturbed diagram $(\Sigma,\{\bbbeta'_i\}_{1\le i\le m},z)$, we have $\mathcal{M}(\phi)\isom \mathcal{M}(\phi')$.
    \end{proposition}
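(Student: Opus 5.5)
The plan is to argue by Gromov compactness together with the openness of surjectivity of Fredholm operators, treating small perturbations of the curves as small perturbations of the boundary conditions. First I reduce to finitely many moduli spaces. By Proposition \ref{admissible} and weak admissibility, for fixed generators only finitely many classes $\phi$ with $n_z(\phi)=0$ and bounded Maslov index have nonnegative domains, so only finitely many $\mathcal{M}(\phi)$ can be nonempty. Next I use the nearly symmetric condition. The structures $J^n$ are fixed, equal to $\Sym(\mathfrak{j})$, near $D$. The perturbation is supported in an arbitrarily small neighbourhood of the points of $\bold{p}$, i.e. inside $V$. I can therefore realize the perturbation $\bbbeta_i\mapsto\bbbeta'_i$ by a family of diffeomorphisms $\psi_\epsilon$ of $\Sigma$ that are $C^\infty$-close to the identity and supported near $\bold{p}$. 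The intersection points $x_{l,l+1}$ then correspond bijectively to intersection points of the perturbed tori, and homotopy classes correspond as $\phi\leftrightarrow\phi'$ with the same $\mu$ and $n_z$.

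For the transversality claim, I pull back by $\Sym^g(\psi_\epsilon)$. This turns the problem for $(\Sigma,\{\bbbeta'_i\},z)$ with $J^n$ into the original tori with structures $\psi_\epsilon^*J^n$, which are $C^\infty$-close to $J^n$. I then argue by contradiction. Suppose there were a sequence $\epsilon_k\to0$ and non-transverse curves $u_k\in\mathcal{M}_{\psi_{\epsilon_k}^*J}(\phi)$ with $\mu(\phi)\le 4-n$. Energy is fixed by $\phi$, and $n_z=0$ rules out sphere bubbles and boundary degenerations, so Gromov compactness applies. A subsequence converges to a broken configuration for $J^n$. Its components have index at most that of $\phi$ and are transversely cut out by Proposition \ref{transversality}. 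If there is a genuine breaking, indices add up, so some component would have negative index, which is impossible for transverse curves. Degenerations of the conformal structure are handled the same way using (J-2). So the limit is an unbroken transverse curve $u$. Surjectivity of $D\overline\partial$ is an open condition, so $D\overline\partial_{u_k}$ is surjective for large $k$, a contradiction. The constant curves at fixed points need separate care. They sit where $J$ is integrable and unperturbed in form, so I check directly that Condition \ref{condition} survives the perturbation as a local transverse triple or quadruple intersection of Maslov index $0$.

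For the identification $\mathcal{M}(\phi)\isom\mathcal{M}(\phi')$ when $\mu\le 3-n$, the moduli spaces have dimension $\le 0$ after quotienting by $\mathbb R$ when $n=2$. They are compact by the argument above, since no breaking is possible in index $\le 3-n$. They are also transversely cut out, hence finite sets of points. The implicit function theorem applied to the family $\psi_\epsilon^*J^n$, $\epsilon\in[0,\epsilon_0]$, gives a cobordism $\bigcup_\epsilon\mathcal{M}_\epsilon(\phi)$. By compactness and transversality this cobordism is a trivial product of a finite set with $[0,\epsilon_0]$, which gives the bijection.

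I expect the main obstacle to be the compactness step near the fixed points. The perturbation is not a Hamiltonian pushoff away from the triple points; it changes the local combinatorics there. The delicate case is small curves concentrating at points of $D$, which are thin triangles or rectangles in the small domains created by the perturbation. These must either converge to the constant curves or be excluded. I would first try to rule out such curves by an energy estimate: their area is $O(\epsilon)$, so they lie in $V$ where $J=\Sym(\mathfrak{j})$, and there they can be counted explicitly by the Riemann mapping theorem. I would then match them with the Maslov-index-$0$ constant solutions guaranteed by Condition \ref{condition}.
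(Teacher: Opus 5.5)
Your overall strategy (Gromov compactness plus openness of surjectivity, then a trivial cobordism) matches the paper's, but the framework you run it in does not exist. If a diffeomorphism $\psi_\epsilon$ of $\Sigma$ carried each $\eta_i$ onto $\eta'_i$, it would carry a common point of three curves to a common point of their images. The passage from the pre-involutive datum to the involutive one (Definition \ref{inv}) exists precisely to destroy those multiple points at $\mathbf p$. If you instead use a separate diffeomorphism for each $\eta_i$, then for $n\ge 3$ no single $\Sym^g(\psi_\epsilon)$ straightens all boundary conditions of an $n$-gon at once; for bigons a diffeomorphism depending on the pair would suffice. So the perturbed problem is not ``the original tori with a $C^\infty$-close almost complex structure''. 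Neither your contradiction argument for transversality nor your cobordism over $\psi_\epsilon^*J^n$ has a setting to run in, and this affects every polygon with a corner at a point of $D$, constant or not. Relatedly, ``supported near $\mathbf p$, i.e.\ inside $V$'' conflates neighbourhoods in $\Sigma$ and in $\Sym^g(\Sigma)$. The tori move at every $g$-tuple with one coordinate near $\mathbf p$, a set not contained in $V$, so $J^n$ need not equal $\Sym^g(\mathfrak j)$ where the boundary conditions change. Your last paragraph notices that the local combinatorics change, but this contradicts the earlier reduction, and the matching of small perturbed polygons with constant ones is left as something you ``would try''.

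The missing idea is to vary the boundary conditions rather than $J$. The paper works on $\mathcal B_{\mathcal P}(\phi)\times\Conf(D_n)$, where $\mathcal P$ is a finite-dimensional space of small perturbations of the curves, with the universal $\bar\partial$-operator over it. There the constant polygons at points of $D$ are genuine elements of $\mathcal M_*(\phi)$, transversely cut out by Proposition \ref{transversality} and Condition \ref{condition}. The small polygons of the perturbed diagram are exactly what the implicit function theorem produces from them, so no separate Riemann-mapping count is needed. Gromov compactness as the perturbation tends to $*$ forces a single-component limit, using $\mu(\varphi_i)\ge 3-n_i$ for each component together with $\sum n_i\le n+2(s-1)$. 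Constant components, which have at least three edges, have $\mu=0$ by Condition \ref{condition}, so they fit this bound. The argument is then finished by openness of surjectivity near the compact, transverse $\mathcal M_*(\phi)$, finiteness of the relevant $\phi$, and a path in $\mathcal P$ giving a compact cobordism that submerses onto $[0,1]$. Your index step is also misstated. A rigid transverse $n$-gon has $\mu=3-n<0$ for $n\ge 4$, so ``some component would have negative index, which is impossible'' is false. Moreover, when $\mu(\phi)=4-n$ the count allows a two-component breaking (e.g.\ an index-$2$ bigon into two index-$1$ bigons). There the limit need not be unbroken, and transversality near such limits requires a gluing argument.
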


    {\bf{Proof}:}
    
    We first prove the case for $n\ge3$.
    We will work in the configuration space $\mathcal{C}_n(\phi)=\mathcal{B}_{\mathcal{P}}(\phi)\times\Conf(D_n)$, where $\mathcal{P}$ is a finite dimensional manifold parameterizing sets of curves $\{\bbbeta'_i\}_{1\le i\le m}$ which are $C^\infty$-small perturbations of $\{\bbbeta_i\}_{1\le i\le m}$, and $\mathcal{B}_\mathcal{P}(\phi)\subset\mathcal{B}(\phi)\times\mathcal{P}$ is the space consists of pair $(u,\epsilon)$ such that $u$ satisfies conditions as in  Definition \ref{mapcondition}, only with the boundary condition altered by the perturbation $\epsilon$. Consider the Banach vector bundle $\mathcal{E}$ over $\mathcal{C}_m(\phi)$ whose fiber over $(u,\epsilon,j)$ is $W^{k,p}_\delta(u^*T\Sym^g(\Sigma)\tensor_{J^n,j}\wedge^{0,1}T^*D_n)$. Then we have the universal Cauchy-Riemann operator $\bar{\partial}:\mathcal{C}_m(\phi)\to \mathcal{E}$. Denote $\mathcal{M}_{\epsilon}(\phi)\subset\mathcal{B}(\phi)\times \{\epsilon\}\times\Conf(D_n)$ the zeros of $\bar{\partial}$ restricted to $\mathcal{B}(\phi)\times \{\epsilon\}\times\Conf(D_n)$. If we denote $*\in \mathcal{P}$ the zero perturbation, then the moduli space $\mathcal{M}_{*}(\phi)\subset\mathcal{B}(\phi)\times \{*\}\times\Conf(D_n)$ is transversely cut out as required by the condition.
    
    {\bf{Claim:}}
    Consider a sequence of perturbations ${\{\epsilon_\alpha\}}_{\alpha=1}^\infty$ that tends to constant, as well as a sequence of $J_n$-holomorphic $n$-gons $\psi_n\in{\mathcal{M}}_{\epsilon}$, we claim that $\psi_n$ converge to some element in $\mathcal{M}_{*}(\phi)\subset \mathcal{B}_{\mathcal{P}}(\phi)$. 
    
    By Gromov's compactness theorem, the sequence of holomorphic $n$-gons converges to some stable holomorphic polygon, and some components may be constant maps. Denote $\varphi_1,\hdots, \varphi_s$ these components, and $n_1\hdots n_s$ the number of their edges, then by the transversality guaranteed in Lemma \ref{transversality}, $\mu(\varphi_i)\ge3-n_i$. Given condition \ref{condition}, every constant polygon has Maslov index $0$, thus $\mu(\varphi_1)+\hdots+\mu(\varphi_s)=\mu(\phi)$. We also have $n_1+\hdots +n_s\le n+(s-1)$, since degeneration along an arc produces at most two more edges. Then above arguments give us inequalities:
    \begin{align*}
    	&3-n_1+\hdots+3-n_s\le 3-n\\
    	&n_1+\hdots +n_s\le n+2(s-1)
    \end{align*} 
    But these imply $s\le1$, so $s=1$, that is the sequence of holomorphic $k$-gons converges to a genuine holomorphic $k$-gon in the unperturbed setting, and this proves the claim.
    
    Specifically, the claim implies that the moduli space $\mathcal{M}_*(\phi)$ is compact. Now by the claim, for every neighborhood $\mathcal{U}\supset\mathcal{M}_*(\phi)$ in $\mathcal{C}_n(\phi)$ there exist a neighborhood $\mathcal{W}_\phi$ of $*$ in $\mathcal{P}$ such that for every $\epsilon\in \mathcal{W}$, $\mathcal{M}_{\epsilon}(\phi)\subset \mathcal{U}$. For every $\phi$ satisfying the assumption of the Proposition,  we take $\mathcal{U_\phi}$ small enough so that $\mathcal{M}(\phi)\cap\mathcal{U}$ is everywhere transversely cut out and take $\mathcal{W_\phi}$ accordingly. Notice that by the weak-admissibility guaranteed by Lemma \ref{admissible}, there are only finitely many such $\phi$ which can support holomorphic polygon, thus we can take $\mathcal{W}$ to be a finite intersection of $\mathcal{W}_\phi$, so that for every $\epsilon\in\mathcal{W}$, $\{J_n\}$ achieves transversality for the Heegaard datum perturbed by $\epsilon$.
    
    For every $\epsilon\in \mathcal{W}$, pick a path $\gamma$ connecting $*$ and $\epsilon$. By the same argument as in the claim we can prove the associated moduli space $\mathcal{M}_\gamma(\phi)$ is compact(in the claim we only used $\mathcal{M}_*(\phi)$ is transversely cut out for all $\phi$ with $n_z=0$ and $\mu(\phi)\le 4-n$).  Moreover $\mathcal{M}_{\gamma}(\phi)$ is smooth and the projection $\mathcal{M}_\gamma(\phi)\to[0,1]$ is a submersion (guaranteed by the everywhere transversality over $\mathcal{W}$), thus it is a trivial cobordism, and this proves the last assertion.
    
    The $n=2$ case is similar. By the same argument as before we know $\mathcal{M}_\epsilon(\phi)$ is transversely cut out for $\epsilon$ small enough(notice that the transversality property is invariant under $\mathbb{R}$-action), so we only need to take care of $\mu(\phi)=1$ case. The only issue is that the cobordism $\mathcal{M}_{\gamma}(\phi)$ is non-compact but compact after modulo the $\mathbb{R}$ action. We prove the $\mathbb{R}$-action on $\mathcal{M}_\gamma(\phi)$ is free and proper, so the quotient $\mathcal{M}_{\gamma}(\phi)/\mathbb{R}$ is a manifold. The free part is obvious. Pick an arbitrary metric $d$ on $\Sym^g(\Sigma)$. If there is a sequence $(a_i,u_i)\in \mathbb{R}\times\mathcal{M}_\gamma(\phi)$ and such that $(a_i\cdot u_i,u_i)$ lies in some compact subset of $\mathcal{M}_\gamma(\phi)\times\mathcal{M}_\gamma(\phi)$. By compactness there are $v,w\in\mathcal{M}_\gamma(\phi)$ such that $d(v,u_i)\rightarrow0$, $d(w,a_i\cdot u_i)\rightarrow0$, where here $d(u',v')=\sup_{p\in[0,1]\times \mathbb{R}}d(u'(p),v'(p))$. Thus, $d(w,a_i\cdot v)\rightarrow0$. If $\{a_i\}$ is unbounded, then $w$ is constant, a contradiction! This proves the action of $\mathbb{R}$ on $\mathcal{M}_\gamma(\phi)$ is proper, so the quotient space is a manifold. As in the previous paragraph the projection $\mathcal{M}_\gamma(\phi)\to [0,1]$ is a submersion. The $n=2$ case then follows as above.
\qed

		\section[exacttriangle]{Proof of the Exact Triangle}\label{proof}

We denote the Heegaard pre-datum constructed in Section \ref{1.2} by $(\Sigma,\pralpha, \prbeta,\prbeta',\\ \prbeta'',z)$, and its small perturbation supported near $\bold{p}$ by $(\Sigma, \beta,\beta',\beta'',z)$(on $\Sigma_{g-1}$ component it looks like the left of Figure (\ref{handleslide}) omitting the blue dashed line). For convenience we shift the grading of $\HFh(\#^l S^1\times S^2)$ so its top grading is $0$.

\subsection{Twisted complexes}

\

To simplify the necessary algebra we introduce the language of twisted complexes. The content of this subsection is from \cite[Section 2.4]{Nah25a}.

We work with non-unital $A_\infty$-categories over $\Field=\Z$. Our conventions are slightly different from \cite{Sei08}: we use the homological convention instead of the cohomological convention, and $A_\infty$-categories are not necessarily (homologically) graded. However, in some special cases the $A_\infty$ category in this article will be graded: we say that an $A_\infty$ category is graded if $\Hom(\alpha,\beta)$ are absolutely $\Z$-graded for every objects $\alpha$ and $\beta$, and the composition maps $\mu_d$ has degree $d-2$.

\begin{definition}
	Let $\mA$ be an $A_\infty$ category. The \emph{additive enlargement} $\Sigma\mA$ is defined as follows. Objects are formal direct sums
	\[\bigoplus_{i\in I}V_i\tensor\alpha_i,\]
	where $I$ is a finite set, $\{\alpha_i\}$ is a family of objects in $\mA$, and $\{V_i\}$ is a family of finite-dimensional vector spaces. Morphisms are defined a combination of morphisms between the vector spaces and morphisms in $\mA$:
	\[\Hom(\bigoplus_{i\in I}V_i\tensor\alpha_i,\bigoplus_{j\in J}W_j\tensor\beta_j)=\bigoplus\Hom_\Field(V_i,W_j)\tensor\Hom_\mA(\alpha_i,\beta_j)\].
	
	Similarly, compositions $\mu_d$ are defined by combining the ordinary composition of maps between
	vector spaces and morphisms in $\mA$. The additive enlargement $\Sigma\mA$ forms an $A_\infty$-category.
	
	\
	
	A \emph{twisted complex} in $\mA$ consists of an object $\underline{\alpha}\in \operatorname{Ob}(\Sigma\mA)$ together with a differential ${\delta_{\underline{\alpha}}}\in
	{\Hom_{\Sigma\mA}}{(\underline{\alpha},\underline{\alpha})}$, such that
	\[\sum_{n\ge1}{\mu_n}^{\Sigma\mA}{({\delta_{\underline{\alpha}}},\hdots,{\delta_{\underline{\alpha}}})}= 0.\]
	Seidel assumes that the differential $\delta_{\underline{\alpha}}$ is strictly lower triangular with respect to a filtration on
	$\underline{\alpha}$ to ensure that the above sum is finite.
	
	\
	
	Twisted complexes in $\mA$ also form an $\mA_\infty$-category $\operatorname{Tw}\mA$. Morphisms between twisted complexes are the same as before,
	\[{\Hom_{\operatorname{Tw}}}(\alpha,\delta_{\underline{\alpha}}),(\beta,\delta_{\underline{\beta}}) = \Hom_{\Sigma\mA}(\underline{\alpha},\underline{\beta}),\]
	but the compositions are different:
\[
\mu_d^{\operatorname{Tw}\mA}(\varphi_1,\hdots,\varphi_d)
=
\sum_{i_0,\hdots,i_d\ge 0}
\mu^{\Sigma\mA}_{d+i_0+\hdots+i_d}
\Bigl(
\overbrace{
	\delta_{\underline{\alpha_0}},\hdots,
	\delta_{\underline{\alpha_0}}
}^{i_0},
\underline{\varphi_1},
\overbrace{
	\delta_{\underline{\alpha_1}},\hdots,
	\delta_{\underline{\alpha_1}}
}^{i_1},
\underline{\varphi_2},
\hdots,
\underline{\varphi_d},
\overbrace{
	\delta_{\underline{\alpha_d}},\hdots,
	\delta_{\underline{\alpha_d}}
}^{i_d}
\Bigr).
\]
\end{definition}

\begin{remark}
	\begin{enumerate}
		\item[1)]
		In this article the $A_\infty$ category $\mA$ will always be given by some weakly admissible Heegaard datum and some choices of almost complex structures. All the vector space $V_i$ will be $\Field$, and in the category $\operatorname{Tw}\mA$ we will only care about $\mu^{\operatorname{Tw}\mA}_1$ 
	\end{enumerate}
\end{remark}

\begin{example}
	The following diagram forms a twisted complex if and only if $\mu_1(f)=0$, $\mu_1(g)=0$ and $\mu_2(f,g)=\mu_1(h)$.

	\[\begin{tikzcd}
		\underline{\beta}=\beta_1\arrow[r,"f"]\arrow[rr,bend right,"h"]&\beta_2\arrow[r,"g"]&\beta_3
	\end{tikzcd}\]
\end{example}

\subsection{Some strong equivalences of the Heegaard datum $(\Sigma,\prbeta,\prbeta',\prbeta'',z)$}

\

The main goal of this section is to prove Theorem \ref{main}. For this purpose we compare our cobordism maps with the traditional setting of exact triangles in \cite{OS04b} and \cite{OS05}(see diagrams (\ref{compare1}) and (\ref{compare2})). First we do handleslides and isotopy on the genus $2$ component $\Sigma_{g-1}$ as in Figure (\ref{handleslide}) and the genus $1$ components $\Sigma_i$ for $1\le i \le g-2$ as follows. 

\begin{figure}
	\begin{subfigure}{0.4\textwidth}
		\includegraphics[width=\textwidth]{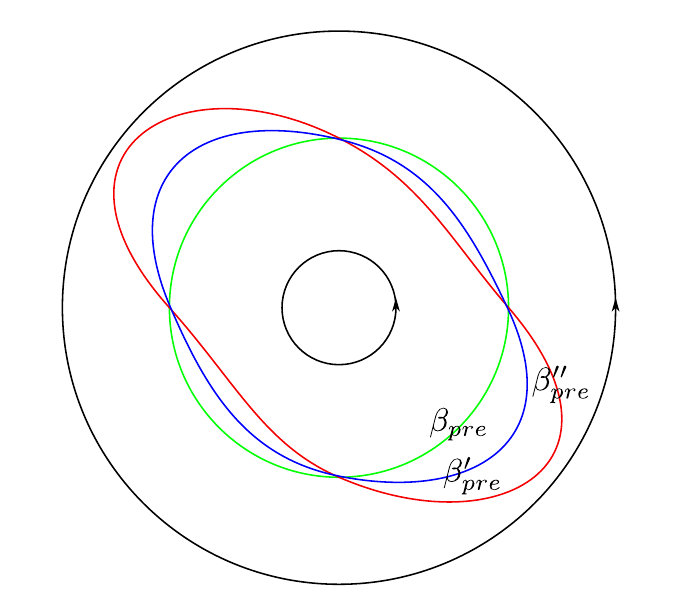}
	\end{subfigure}
	\begin{subfigure}{0.4\textwidth}
	  	\centering
	  \includegraphics[width=\textwidth]{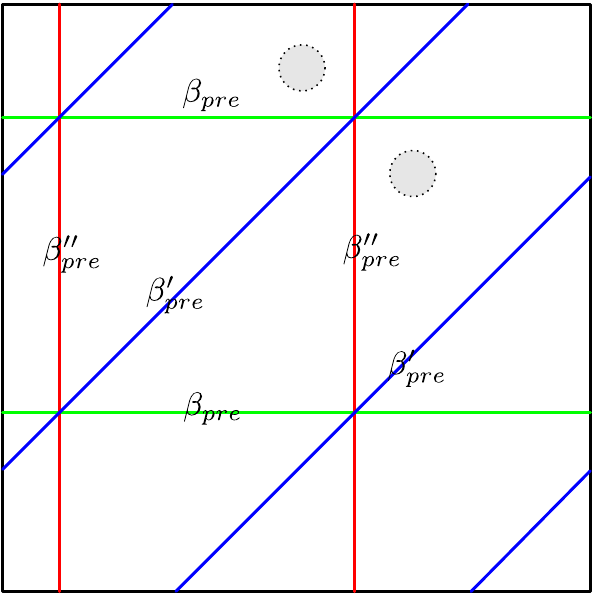}
\end{subfigure}
\caption{Left: The $\Sigma_i$ component($1\le i\le g-2$) of the Heegaard diagram $(\Sigma,\prbeta,\prbeta',\prbeta'',z)$, where we identify the two boundaries as indicated. Right: The $\Sigma_{g-1}$ component($1\le i\le g-2$) of the Heegaard diagram $(\Sigma,\prbeta,\prbeta',\prbeta'',z)$, where we do $0$-surgery at the two grey disks and identify the parallel edges of the square. It can also be seen as attaching a cylinder to the bottom and top of the right of Figure (\ref{double}).}\label{square}

\end{figure}

\begin{figure}
	\centering
\begin{subfigure}{0.3\textwidth}
		\centering
	\includegraphics[width=\textwidth]{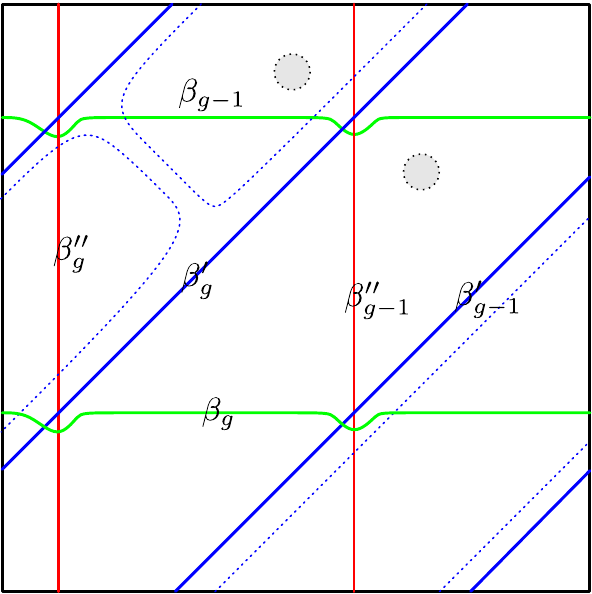}
	\end{subfigure}
	\hfill
	\begin{subfigure}{0.3\textwidth}
		\centering
	\includegraphics[width=\textwidth]{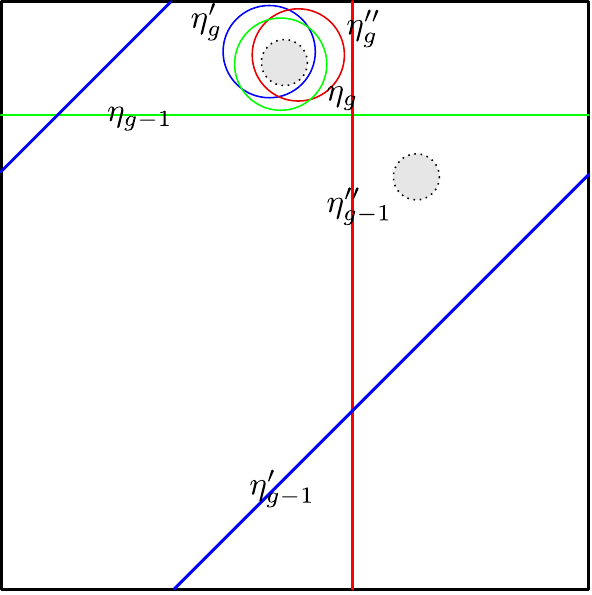}
\end{subfigure}
\hfill
\begin{subfigure}{0.3\textwidth}
	\centering
	\includegraphics[width=\textwidth]{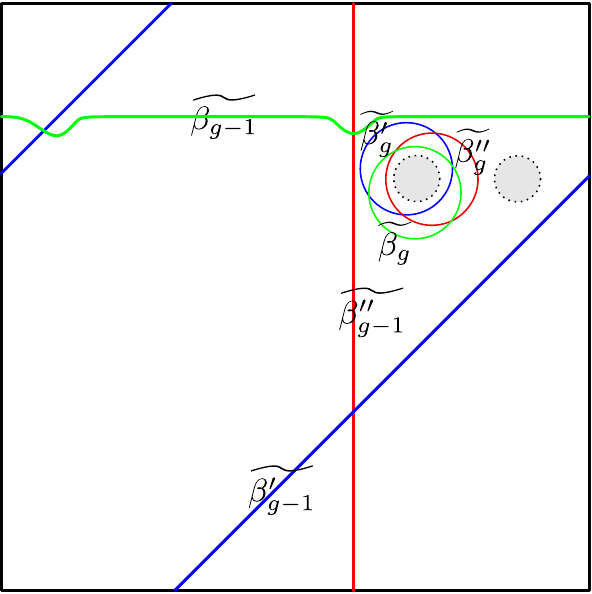}
\end{subfigure}
\caption{The strong equivalences on the $\Sigma_{g-1}$ component. Left: The solid curves represent a small perturbation of the pre-involutive diagram $(\Sigma,\prbeta,\prbeta',\prbeta'',z)$, while the blue dashed line represents the result of handlesliding of $\beta_{g}'$ over $\beta_{g-1}'$. Middle: The result of handlesliding $\beta_{g}$ over $\beta_{g-1}$, $\beta_{g}'$ over $\beta_{g-1}'$ and $\beta_{g}''$ over $\beta_{g-1}''$ followed by some further isotopies. Right: The result of handlesliding $\eta_{g}$ over $\eta_{g-1}$, $\eta_{g}''$ over $\eta_{g-1}''$.} \label{handleslide}
\end{figure}

First we work over the genus $2$ component. To simplify notations, in the this paragraph the range of index $i$ will be $\{g-1,g\}$. We draw the right of Figure (\ref{double}) on a square as in the right of Figure (\ref{square}), where we identify the boundaries of the grey disks to get a genus $2$ orientable surface, and the basepoint $z$ lies inside the cylinder connecting the boundaries of the grey disks. Then we perform small isotopy of the pre-involutive Heegaard datum (right of Figure (\ref{square})) to get the involutive Heegaard datum $(\Sigma,\beta,\beta',\beta'',z)$(which are indicated in the left of Figure (\ref{handleslide}) omitting the dashed blue curve). Then we handleslide $\beta'_g$ curves as indicated in the left of Figure (\ref{handleslide}) over $\beta'_{g-1}$ curve, and similar handlesildes of $\beta_g$ and $\beta_g''$ curves over $\beta_{g-1}$ and $\beta_{g-1}''$ curves respectively. After some further isotopies we get a diagram as in the middle of Figure (\ref{handleslide}), and we denote the new curves $\eta_i$, $\eta_i'$, $\eta_i''$ as shown in the middle of Figure (\ref{handleslide}). Unfortunately the middle of Figure (\ref{handleslide}) is not weakly admissible, to mend this we handleslide $\eta_{g}$ over $\eta_{g-1}$, $\eta_{g}''$ over $\eta_{g-1}''$ respectively to get the curves  $\tbeta_i$, $\tbeta_{i}'$, $\tbeta_{i}''$  as in the right of Figure (\ref{handleslide}). 

For the genus $1$ parts, $1\le i\le g-1$, we simply isotope $\beta_i$, $\beta'_i$ and $\beta''_i$ to a position such that they are small Hamiltonian isotopic of each other and every two of them intersect transversely in two points, and set the new curves to be $\tbeta_i$, $\tbeta'_i$ and $\tbeta''_i$ in the corresponding components. 

We assume the curves in the Heegaard pre-datum $(\Sigma,\prbeta,\prbeta',\prbeta'',\beta,\beta',\beta'',\tbeta, \tbeta',\tbeta'',z)$ are in general positions.
 
Now the Heegaard diagram $(\Sigma_{g-1}, \{\tbeta_{g-1}, \tbeta_{g}\}, \{\tbeta'_{g-1}, \tbeta'_{g}\},\{\tbeta''_{g-1}, \tbeta''_{g}\},z)$ can be viewed as a connected sum $(\Sigma^{(1)}_{g-1}, \tbeta_{g-1}, \tbeta'_{g-1}, \tbeta''_{g-1}, z^{(1)})$ and $(\Sigma^{(2)}_{g-1}, \tbeta_{g}, \tbeta'_{g}, \tbeta''_{g}, z^{(2)})$ at the basepoints, where $\tbeta_{g}, \tbeta'_{g}, \tbeta''_{g}$ are nontrivial curves on a  torus that are small Hamiltonian isotopy of each other and every two of them intersect transversely in two points, while $\tbeta_{g-1}, \tbeta'_{g-1}, \tbeta''_{g-1}$ are curves on the torus that has $0$, $1$, $\infty$ slope respectively. Now the Heegaard triple $(\Sigma,\tbeta,\tbeta',\tbeta'',z)$ is in the setting of exact triangle we are familiar with(see \cite[Theorem 4.7, Figure 8, Figure 9]{OS05}).

\subsection{Choices of almost complex structures and the elements $\Theta$}\label{choices}
The first preparation for the final argument we will do is to choose an almost complex structure we want to work with. Now we have a Heegaard pre-datum $(\Sigma,\prbeta,\prbeta',\prbeta'',\beta,\beta',\beta'',\\ \tbeta, \tbeta',\tbeta'',z)$, to use $A_\infty$ relations we need to choose an admissible almost complex structure achieving transversality. We do this as follows. We first choose an equivariant admissible almost complex structure $\{J_n\}$ achieving transversality for the Heegaard pre-datum $(\Sigma,\prbeta,\prbeta',\prbeta'',\tbeta, \tbeta',\tbeta'',z)$ as in Lemma \ref{transversality}(notice that by the general position any holomorphic polygon with boundary on $T_{\tbeta}, T_{\tbeta'}$ or $T_{\tbeta''}$ must be non-$\tau$-invariant, so the argument in Lemma \ref{transversality} works), and then we choose the perturbations $\tbeta, \tbeta',\tbeta''$ small enough so that the result of Proposition \ref{same} holds for the admissible almost complex structure $\{J^n\}$ and the perturbations $\tbeta, \tbeta',\tbeta''$.
Notice that we are considering the hat version of Heegaard Floer homology, so we do not need to worry about the case of boundary degenerations and sphere bubbling, that is we can freely use the $A_\infty$ relations in the following arguments. 

The second preparation is to choose cycles in $\CFh(\beta,\beta')$, $\CFh(\beta',\beta'')$, $\CFh(\tbeta,\tbeta')$ and $\CFh(\tbeta',\tbeta'')$ representing the generators in the corresponding homology in the $0$ grading, which we will denote by $\Theta_{\beta\beta'}$, $\Theta_{\beta'\beta''}$, $\Theta_{\tbeta\tbeta'}$ and $\Theta_{\tbeta'\tbeta''}$ respectively. The chains $\Theta_{\tbeta\tbeta'}$ and $\Theta_{\tbeta'\tbeta''}$ are easy to choose since the Heegaard diagrams $(\Sigma,\tbeta,\tbeta')$ and $(\Sigma,\tbeta',\tbeta'')$ are so simple that there is only one choice for the generator in the $0$ grading. The Heegaard diagrams $(\Sigma,\beta,\beta')$ and $(\Sigma,\beta',\beta'')$ have more generators in the $0$ grading, for each of them we choose $\Theta$ to be a cycle representing the generators for the corresponding homology in the $0$ grading (the choice may vary with the almost complex structure we chose). Notice that the generator $\Theta_{\beta\beta'}$, $\Theta_{\beta'\beta''}$ we chose "nearly" $\tau$-invariant, that is they are close to invariant cycles in the corresponding pre-involutive diagrams before the perturbations: For the $\Sigma_{g-1}$ component, every intersection points on $\Sigma_{g-1}$ is nearly $\tau$-invariant(see the left of Figure (\ref{handleslide})(omitting the dashed lines)); For the $\Sigma_i(1\le i \le g-2)$ component, the $\beta_i, \beta_i'$ and $\beta_i''$ curves are small perturbations of those in the left of Figure (\ref{square}), and the generators in the $0$ grading should be chosen to contain the points close to the multiple intersections thanks to our local arrangement at the fixed points. This will be important in the proof of the main theorem \ref{proofmain} since we want the maps we constructed between $\CFh(\alpha,\beta), \CFh(\alpha,\beta)$ and $\CFh(\alpha,\beta'')$ to be equivariant, see the top line of diagram (\ref{compare1}).

\subsection{Proof of the main theorem}
\begin{lemma}\label{lem}
There exists an element $\theta\in \CFh(\beta,\beta'')$ such that $\mu_1(\theta)=\mu_2(\Theta_{\beta\beta'},\Theta_{\beta'\beta''})$, i.e. the following is a twisted complex:
	\[
\begin{tikzcd}[column sep=large, row sep=large]
	\beta \arrow[r,"\Theta_{\beta\beta'}"]\arrow[rr,bend left,"\theta"]&\beta' \arrow[r,"\Theta_{\beta'\beta''}"]&\beta''\\
\end{tikzcd}
\]
\end{lemma}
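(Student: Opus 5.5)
We need to show that $\mu_2(\Theta_{\beta\beta'},\Theta_{\beta'\beta''})$ is null-homologous in $\CFh(\beta,\beta'')$. Since $\mu_1(\Theta_{\beta\beta'})=0$ and $\mu_1(\Theta_{\beta'\beta''})=0$, the $A_\infty$ relation
\[\mu_1\mu_2(\Theta_{\beta\beta'},\Theta_{\beta'\beta''})=\mu_2(\mu_1\Theta_{\beta\beta'},\Theta_{\beta'\beta''})+\mu_2(\Theta_{\beta\beta'},\mu_1\Theta_{\beta'\beta''})=0\]
shows that $\mu_2(\Theta_{\beta\beta'},\Theta_{\beta'\beta''})$ is a cycle. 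This relation is available by Proposition \ref{transversality} and the hat-version remark. It therefore suffices to show that its class in $\HFh(\beta,\beta'')$ vanishes.

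\textbf{Step 1: transfer to the tilde diagram.} The diagram $(\Sigma,\tbeta,\tbeta',\tbeta'',z)$ is obtained from $(\Sigma,\beta,\beta',\beta'',z)$ by simultaneous handleslides and isotopies. By the standard naturality of triangle maps under handleslides and isotopies, there is a commutative square on homology. Its two rows are
\[\HFh(\beta,\beta')\tensor\HFh(\beta',\beta'')\to\HFh(\beta,\beta'')\]
and the analogous tilde map, and the vertical maps are the continuation isomorphisms. These isomorphisms carry top-graded generators to top-graded generators, since the gradings are shifted so that the top degree is $0$. Concretely, this is the associativity argument with $\mu_3$ used in \cite{OS04b}, which the twisted-complex language packages as $A_\infty$ relations among the $\Theta$'s and the continuation elements. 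Hence $[\mu_2(\Theta_{\beta\beta'},\Theta_{\beta'\beta''})]$ corresponds to $[\mu_2(\Theta_{\tbeta\tbeta'},\Theta_{\tbeta'\tbeta''})]$.

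\textbf{Step 2: compute in the tilde diagram.} The tilde diagram is a connected sum of genus-one pieces, so I will use the Künneth splitting of triangle counts under connected sum at $z$, as in \cite[Theorem 4.7]{OS05}. On each torus where the three curves are small Hamiltonian translates of one another, the product of top generators is the top generator. On the torus component with slopes $0,1,\infty$, the triple is exactly the one in the surgery exact triangle. There $\HFh(\tbeta_{g-1},\tbeta'_{g-1})$, $\HFh(\tbeta'_{g-1},\tbeta''_{g-1})$ and $\HFh(\tbeta_{g-1},\tbeta''_{g-1})$ are each of rank one, and the composite of two consecutive maps in the triangle vanishes. Concretely, the two small triangles contributing to $\mu_2(\Theta,\Theta)$ on this torus cancel mod $2$, or the relevant generator is a boundary, as in \cite[Lemma 4.2]{OS04b}. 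Tensoring the pieces together gives $[\mu_2(\Theta_{\tbeta\tbeta'},\Theta_{\tbeta'\tbeta''})]=0$.

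Combining the two steps, $[\mu_2(\Theta_{\beta\beta'},\Theta_{\beta'\beta''})]=0$, so some $\theta$ with $\mu_1(\theta)=\mu_2(\Theta_{\beta\beta'},\Theta_{\beta'\beta''})$ exists. The twisted-complex condition then holds by the Example above.

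\textbf{Main obstacle.} I expect Step 1 to be the hardest part. The $\beta$-diagram is only involutive, not in standard position, and the handleslides pass through a non-admissible intermediate diagram (the $\eta$ curves). Admissibility of each diagram actually used follows from Proposition \ref{admissible}, and from the explicit final handleslide for the tilde curves. Comparing almost complex structures via Proposition \ref{same} lets the $A_\infty$ relations be applied on a single pre-datum containing all the curves. I would first try to write the continuation elements $\Theta_{\beta\tbeta}$ and their analogues as cycles in this big pre-datum. I would then use the $\mu_3$ relation among $\Theta_{\beta\beta'},\Theta_{\beta'\tbeta'},\Theta_{\tbeta'\tbeta''}$ and related terms to transport the class.
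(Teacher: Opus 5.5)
Your proposal is correct and follows essentially the same route as the paper. The paper transports $\mu_2(\Theta_{\beta\beta'},\Theta_{\beta'\beta''})$ to the $\tbeta$ diagram using the handleslide invariance of triangle maps \cite[Proposition 4.6]{OS06}, namely the square (\ref{lalala}) commuting up to chain homotopy, and then cites \cite[Proposition 9.5]{OS04b} for the vanishing of $[\mu_2(\Theta_{\tbeta\tbeta'},\Theta_{\tbeta'\tbeta''})]$; your extra details (the cycle check via the $A_\infty$ relation, top generators mapping to top generators by grading, and the connected-sum sketch of the vanishing) are points the paper leaves implicit or delegates to that citation.
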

{\bf{Proof:}}

\

The trick is to compare this with the $\tbeta, \tbeta', \tbeta''$ as in the proof of Proposition \ref{iso}, and use the handleslide invariance of triangle maps. To be more specific, since $\tbeta, \tbeta', \tbeta''$ is obtained from $\beta, \beta', \beta''$ by handleslides, by of \cite[Proposition 4.6]{OS06}, the diagram
\begin{equation}\label{lalala}
\begin{tikzcd}[column sep=huge]
	\CFh(\beta,\beta')\arrow[r,"{\mu_2(\cdot,\Theta_{\beta'\beta''})}"]\arrow[d,"{\Psi_{\beta\to\tbeta,\beta'\to\tbeta'}}"]&\CFh(\beta,\beta'')\arrow[d,"{\Psi_{\beta\to\tbeta,\beta''\to\tbeta''}}"]\\
	\CFh(\tbeta,\tbeta')\arrow[r,"{\mu_2(\cdot,\Theta_{\tbeta'\tbeta''})}"]&\CFh(\tbeta,\tbeta'')
	\end{tikzcd}
\end{equation}
commutes up to chain homotopy, where the two vertical maps are the chain homotopy equivalences induced by strong equivalences of Heegaard diagrams, and $\Theta_{\beta'\beta''}$ and $\Theta_{\tbeta'\tbeta''}$ are representatives of the top generators of $\HFh(\beta',\beta'')$ and $\HFh(\tbeta',\tbeta'')$ chosen in Subsection \ref{choices}. As shown in Figure (\ref{handleslide}), the $\tbeta, \tbeta', \tbeta''$ curves are in the same setting as \cite[Proposition 9.5]{OS04b}, so we have that $\mu_2(\Theta_{\tbeta\tbeta'},\Theta_{\tbeta'\tbeta''})$ is null-homologous, combining this with diagram (\ref{lalala}), $\mu_2(\Theta_{\beta\beta'},\Theta_{\beta'\beta''})$ is also null-homologous, and the lemma follows.
\qed

\

The technical part of this section is the following one:
\begin{proposition}\label{iso}
	Given a Heegaard data $(S, \gamma,\gamma',\gamma'',\tgamma,\tgamma',\tgamma'',z)$ which is weakly admissible, such that:
	\begin{enumerate}
	  \item[1)]Each of $(S,\gamma,\gamma',z)$, $(S,\gamma',\gamma'',z)$,$(S,\gamma,\gamma'',z)$, $(S,\tgamma,\tgamma',z)$, $(S,\tgamma',\tgamma'',z)$, $(S,\tgamma,\tgamma'',z)$ represents the three manifold $\#^{l}S^1\times S^2$ for some $l$.
	  \item[2)] There exists an chain $\theta\in \CFh(\gamma,\gamma')$(and $\ttheta\in \CFh(\tgamma,\tgamma')$) such that $\mu_1(\theta)=\mu_2(\Theta_{\gamma\gamma'},\Theta_{\gamma'\gamma''})$\\(and $\mu_1(\ttheta)=\mu_2(\Theta_{\tgamma\tgamma'},\Theta_{\tgamma'\tgamma''})$). In other word, the following is a twisted complex:
	  \begin{equation}\label{c}
	  	\begin{tikzcd}[column sep=large, row sep=large]
	  		\gamma \arrow[r,"\Theta_{\gamma\gamma'}"]\arrow[rr,bend left,"\theta"]&\gamma' \arrow[r,"\Theta_{\gamma'\gamma''}"]&\gamma'',\\
	  	\end{tikzcd}
	  		  	\begin{tikzcd}[column sep=large, row sep=large]
	  		\tgamma \arrow[r,"\Theta_{\tgamma\tgamma'}"]\arrow[rr,bend left,"\ttheta"]&\tgamma' \arrow[r,"\Theta_{\tgamma'\tgamma''}"]&\tgamma'',\\
	  	\end{tikzcd}
	  \end{equation}
	where $\Theta_{\gamma\gamma'}$, $\Theta_{\gamma'\gamma''}$, $\Theta_{\tgamma\tgamma'}$ and $\Theta_{\tgamma\tgamma''}$ are any chains representing the generators of $\HFh(\gamma,\gamma')$, $\HFh(\gamma',\gamma'')$, $\HFh(\tgamma,\tgamma')$ and $\HFh(\tgamma',\tgamma'')$ in the $0$ grading, respectively. 
	\end{enumerate}
	
	\
	Then we can complete $\Theta_{\gamma\tgamma}$, $\Theta_{\gamma'\tgamma'}$ and $\Theta_{\gamma''\tgamma''}$ (which, again are some representatives for the generators of the corresponding Heegaard Floer homology in the $0$ grading) into a cycle $\Theta$ between the twisted complexes $\underline{\gamma\gamma'\gamma''}$ and $\underline{\tgamma\tgamma'\tgamma''}$ by choosing the diagonal chains appropriately:
	\begin{equation}\label{cycle}
	\begin{tikzcd}[column sep=large, row sep=large]
	\gamma \arrow[r,"{\Theta_{\gamma\gamma'}}"]\arrow[d,"{\Theta_{\gamma\tgamma}}"]\arrow[rd]\arrow[rrd]\arrow[rr,bend left,"\theta"]&\gamma' \arrow[r,"{\Theta_{\gamma'\gamma''}}"]\arrow[rd]\arrow[d,"{\Theta_{\gamma'\tgamma'}}",crossing over]&\gamma''\arrow[d,"{\Theta_{\gamma''\tgamma''}}"]\\
		\tgamma\arrow[r]\arrow[rr, bend right,"\ttheta"]&\tgamma'\arrow[r]&\tgamma''
	\end{tikzcd}
	\end{equation}

\end{proposition}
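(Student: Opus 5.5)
We unwind what it means for the diagram (\ref{cycle}) to be a cycle in $\operatorname{Tw}\mA$. A morphism $\Theta$ between the two twisted complexes has components indexed by pairs (source, target) with source in $\{\gamma,\gamma',\gamma''\}$ and target in $\{\tgamma,\tgamma',\tgamma''\}$. Since both differentials are strictly upper triangular, only the components from an earlier-or-equal column to a later-or-equal column are relevant. These are the three vertical chains $\Theta_{\gamma\tgamma},\Theta_{\gamma'\tgamma'},\Theta_{\gamma''\tgamma''}$, the two length-one diagonals $x\in\CFh(\gamma,\tgamma')$ and $y\in\CFh(\gamma',\tgamma'')$, and the long diagonal $w\in\CFh(\gamma,\tgamma'')$.

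Expanding $\mu_1^{\operatorname{Tw}}(\Theta)=0$ component by component gives a system of equations, which we solve by induction on the "length" (target column minus source column).

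\emph{Length $0$.} The equations are $\mu_1(\Theta_{\gamma\tgamma})=0$, and similarly for the other two vertical chains. These hold because each vertical chain is a cycle.

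\emph{Length $1$.} The equations are
\[\mu_1(x)=\mu_2(\Theta_{\gamma\gamma'},\Theta_{\gamma'\tgamma'})+\mu_2(\Theta_{\gamma\tgamma},\Theta_{\tgamma\tgamma'}),\]
and the analogous one for $y$. The right-hand side is a cycle in $\CFh(\gamma,\tgamma')$. To solve for $x$ we must show it is null-homologous. By hypothesis 1), $(S,\gamma,\tgamma')$ represents $\#^lS^1\times S^2$. Each of the two triangle products of top-degree generators represents the top generator $\Theta_{\gamma\tgamma'}$: this is the standard computation of \cite{OS04a} for small Hamiltonian translates, transported by handleslide invariance as in diagram (\ref{lalala}). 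Hence their sum vanishes in homology over $\Field$, and we can choose $x$; $y$ is chosen the same way.

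\emph{Length $2$.} The single remaining equation expresses $\mu_1(w)$ as a sum $R$ of terms. These are $\mu_2$ terms pairing $\theta$ or $\ttheta$ with vertical chains, $\mu_2$ terms pairing $x$ or $y$ with horizontal $\Theta$'s, and the three $\mu_3$ terms such as $\mu_3(\Theta_{\gamma\gamma'},\Theta_{\gamma'\gamma''},\Theta_{\gamma''\tgamma''})$, $\mu_3(\Theta_{\gamma\gamma'},\Theta_{\gamma'\tgamma'},\Theta_{\tgamma'\tgamma''})$ and $\mu_3(\Theta_{\gamma\tgamma},\Theta_{\tgamma\tgamma'},\Theta_{\tgamma'\tgamma''})$. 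Using the $A_\infty$ relations ($\mu_1\mu_3+\mu_2\mu_2+\mu_3\mu_1=0$, with all $\Theta$'s cycles) together with the length-$\le1$ equations, $\mu_1(R)=0$. This is formally guaranteed: $\mu_1^{\operatorname{Tw}}\circ\mu_1^{\operatorname{Tw}}=0$ applied to the partially defined $\Theta$ shows the obstruction is closed.

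We then need $[R]=0$ in $\HFh(\gamma,\tgamma'')\cong \HFh(\#^lS^1\times S^2)$, and this is the main obstacle. The plan is a grading argument. All inputs are homogeneous: the $\Theta$'s have grading $0$ (top). The diagonals $\theta,\ttheta,x,y$ then have grading $1$, since $\mu_2$ has degree $0$ and $\mu_1$ has degree $-1$ in the homological convention, where $\mu_d$ has degree $d-2$. Consequently $R$ lies in grading $1$. Since the top grading of $\HFh(\#^lS^1\times S^2)$ was normalized to $0$, the homology in degree $1$ is zero, so $R=\mu_1(w)$ is solvable. To make this rigorous one must check that the relevant Heegaard multi-diagrams carry absolute $\QQ$-gradings in which polygon counts have the stated degrees. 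Each pairwise diagram represents $\#^lS^1\times S^2$, whose $\HFh$ is torsion-graded, and the domains contributing to the polygon maps have well-defined Maslov index. The hard part is justifying that $\mu_3$ shifts grading by exactly $1$, which I would do via the index formula for $n$-gons in $\#^lS^1\times S^2$-type diagrams with vanishing $c_1$. If the grading argument fails in some spin$^c$ subtlety, the fallback is to use the handleslide comparison to reduce to the explicit diagram of \cite[Proposition 9.5]{OS04b}, where the degree-$1$ part can be computed directly.

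Finally, any choices of $x,y,w$ satisfying these equations give the desired cycle $\Theta$, completing the proof.
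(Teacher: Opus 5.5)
Your proposal is correct and follows essentially the same route as the paper. The length-one diagonals exist because both triangle products represent the top generator of $\HFh(\gamma,\tgamma')$; the paper phrases this as $\mu_2(\cdot,\Theta_{\gamma'\tgamma'})$ and $\mu_2(\Theta_{\gamma\tgamma},\cdot)$ being chain homotopy equivalences. The long diagonal exists because $\mu_1^{\operatorname{Tw}}\circ\mu_1^{\operatorname{Tw}}=0$ makes the obstruction a grading-$1$ cycle in $\CFh(\gamma,\tgamma'')$, where homology vanishes.

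One small point: hypothesis 1) covers only the unmixed pairs. The facts you use about $(\gamma,\tgamma')$ and $(\gamma,\tgamma'')$ therefore come from the assumption that the tilded curves are handleslide/isotopy translates of the untilded ones, which the paper also uses without stating it.
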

	{\bf{Proof:}}

\

Given an element $\Theta\in \CFh(\twigamma, \twitgamma)$, we denote $\Theta_{(i,j)}$ the component of $\Theta$ with source on the $i$ column in the first row, and target the $j$ column in the second row($0\le i,j\le 2$). We also denote $\Theta_{\le k}$ the sum of the $\Theta_{(i,j)}$ components with $0\le j-i\le k$. 
We first construct the $(0,1)$ and the $(1,2)$ components of the cycle.

The existence of the $(0,1)$-component is equivalent to that $\mu_2(\Theta_{\gamma\gamma'},\Theta_{\gamma'\tgamma'})$ and $\mu_2(\Theta_{\gamma\tgamma},\Theta_{\tgamma\tgamma'})$ are homologous in $\CFh(\gamma,\tgamma')$. This is because that the maps $\mu_2(\cdot,\Theta_{\gamma'\tgamma'})$ and $\mu_2(\Theta_{\gamma\tgamma},\cdot)$ are both  chain homotopy equivalences between $\CFh(\gamma,\gamma')$ and $\CFh(\gamma,\tgamma')$, $\CFh(\tgamma,\tgamma')$ and $\CFh(\gamma,\tgamma')$ respectively(\cite[Theorem 2.1]{OS06}), so $\mu_2(\Theta_{\gamma\gamma'},\Theta_{\gamma'\tgamma'})$ and $\mu_2(\Theta_{\gamma\tgamma},\Theta_{\tgamma\tgamma'})$ are both homologous to the generator of $\CFh(\gamma,\tgamma')$. The assertion for $(1,2)$ component follows similarly.

Now we are left to construct the $(0,2)$ component, we do this in the following steps. We denote $\Theta'$ a chain in $\CFh({\underline{\gamma\gamma'\gamma''}}, {\underline{\tgamma\tgamma'\tgamma''}})$ with the $(0,0), (1,1), (2,2)$ components as shown in Figure (\ref{cycle}), the $(0,1)$ and $(1,2)$-components constructed as in the last paragraph, and the $(0,2)$ component set to be $0$. Now the $A_\infty$ relations reads $\mu^{Tw}_1(\mu^{Tw}_1(\Theta'))=0$ in $\CFh(\twigamma,\twitgamma)$.

Now since the $\mu^{Tw}_1(\Theta')_{\le1}$ component is $0$ by the construction of $\Theta'_{(0,1)}$ and $\Theta'_{(1,2)}$, we know that $\mu^{Tw}_1(\mu^{Tw}_1(\Theta'))_{(0,2)}=\mu_1(\mu_1^{Tw}(\Theta'))=0$, which means that $\mu^{Tw}_1(\Theta')=\mu^{Tw}_1(\Theta')_{(0,2)}$ is a cycle of grading $1$ in $\CFh(\gamma,\tgamma'')$. Since the grading $1$ summand of $\HFh(\gamma,\tgamma'')$ vanishes, it is also a boundary, so we can construct an element $\xi$ in the grading $2$ summand of $\CFh(\gamma,\tgamma'')$ such that $\mu_1(\xi)=\mu^{Tw}_1(\Theta')$, that is if we change the $(0,2)$ component of $\Theta'$ from $0$ to $\xi$ and denote the new chain $\Theta$, then $\mu^{Tw}_1(\Theta)=0$ and $\Theta$ is the cycle we want to construct in diagram (\ref{cycle}).
\qed

\begin{lemma}\label{intermediate}
	Given a Heegaard datum $(\Sigma,\delta_1,\delta_2,\delta_3,\eta_1,\eta_2,\eta_3,z)$ such that both the Heegaard data $(\Sigma,\delta_1,\delta_2,\delta_3,z)$ and $(\Sigma,\eta_1,\eta_2,\eta_3,z)$ are weakly admissible, then there exist sets of curves $\xi_1,\xi_2,\xi_3$ on $\Sigma$ such that $\xi_i$ is obtained by isotoping $\delta_i$, and both $(\Sigma,\delta_1,\delta_2,\delta_3,\xi_1,\xi_2,\xi_3,z)$ and $(\Sigma,\xi_1,\xi_2,\xi_3,\eta_1,\eta_2,\eta_3,z)$ are Heegaard data, and both of them are weakly admissible.  
\end{lemma}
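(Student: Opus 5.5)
The plan is to obtain the $\xi_i$ from the $\delta_i$ by the winding procedure of \cite[Section 5]{OS04a}, applied simultaneously and identically to all three sets of curves. Concretely, first let $\xi_i^0$ be a small Hamiltonian push-off of $\delta_i$, chosen so that each curve of $\xi_i^0$ meets its partner in $\delta_i$ in two points and all curves are in general position. Then choose oriented simple closed curves $c_1,\hdots,c_{2g}$ on $\Sigma\backslash\{z\}$ whose classes span $H_1(\Sigma;\QQ)$, together with a family of disjoint arcs near which we perform finger moves. Finally let $\varphi_N$ be the diffeomorphism obtained by winding $N$ times along each $c_k$ in a thin annular neighborhood avoiding $z$, and set $\xi_i=\varphi_N(\xi_i^0)$ for $i=1,2,3$. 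Since $\varphi_N$ is isotopic to the identity, each $\xi_i$ is an isotopic copy of $\delta_i$. Moreover, $(\Sigma,\xi_1,\xi_2,\xi_3,z)$ is diffeomorphic to a small perturbation of $(\Sigma,\delta_1,\delta_2,\delta_3,z)$, so after a further generic perturbation both tuples in the statement are Heegaard data with no triple points.

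For admissibility I will use the standard linear-algebra criterion. Periodic domains of a datum form a finitely generated lattice $\Pi$, and weak admissibility fails only if some nonzero $P\in\Pi\otimes\QQ$ has all coefficients $\ge0$. For each winding annulus $A_k$ around $c_k$, the coefficients of $P$ on the successive strips of $A_k$ change by the algebraic intersection number of $\partial P$ restricted to the wound curves with an arc transverse to $c_k$. This change accumulates roughly $N$ times as one crosses the annulus. Hence, if this pairing is nonzero for some $k$, then for $N$ large the coefficients of $P$ take both signs. A compactness argument on the unit sphere of $\Pi\otimes\mathbb{R}$ (as in \cite[Lemma 5.4]{OS04a}) makes $N$ uniform in $P$. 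This reduces the problem to periodic domains whose boundary, restricted to the $\xi$ curves, pairs trivially with every $c_k$, i.e.\ whose $\xi$-part is null-homologous in $H_1(\Sigma)$.

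For $(\Sigma,\delta_1,\delta_2,\delta_3,\xi_1,\xi_2,\xi_3,z)$, a periodic domain with null-homologous $\xi$-part can be modified by thin domains between $\delta_i$ and its push-off $\xi_i^0$ (followed by the winding). This turns it into a periodic domain of $(\Sigma,\delta_1,\delta_2,\delta_3,z)$ plus a combination of thin bigon-type domains. The former has both signs by hypothesis, and the latter always has both signs unless it is zero. For $(\Sigma,\xi_1,\xi_2,\xi_3,\eta_1,\eta_2,\eta_3,z)$, the analogous reduction shows that a domain whose $\xi$-part is null-homologous differs from a periodic domain of the $\eta$-triple by a domain with boundary a null-homologous combination of $\xi$ curves. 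Such a combination is, after winding, a periodic domain of $(\Sigma,\xi_1,\xi_2,\xi_3,z)\cong(\Sigma,\delta_1,\delta_2,\delta_3,z)$, and I would try to argue that one of the two pieces forces a sign change.

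I expect this last step to be the main obstacle. It concerns mixed periodic domains in the $(\xi,\eta)$ datum whose $\xi$-boundary and $\eta$-boundary are separately homologically nontrivial but cancel in homology; such domains are not detected by pairing with the $c_k$ alone. My first attempt is to pair instead with curves that meet the $\xi$ curves but avoid the $\eta$ curves locally, placing the winding regions at finger moves pushed along arcs transverse to $\xi$ only. This makes the coefficient jumps depend on the $\xi$-part multiplicities rather than on its homology class. The argument would then show that any such mixed domain with nonzero $\xi$-multiplicity acquires both signs for $N$ large, and the domains with zero $\xi$-multiplicity are periodic domains of the weakly admissible $\eta$-triple.
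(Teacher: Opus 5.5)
Your proposal has a genuine gap, and it sits exactly at the step you flag yourself; that step carries the entire content of the lemma.

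\textbf{Why the uniform twist cannot finish the argument.} After the twist $\varphi_N$, the undetected periodic domains of $(\Sigma,\xi_1,\xi_2,\xi_3,\eta_1,\eta_2,\eta_3,z)$ are exactly the sums $P=P_\xi+P_\eta$, with $P_\xi$ a periodic domain of the $\xi$-triple and $P_\eta$ one of the $\eta$-triple. Because $\varphi_N$ is applied to all $\xi$ curves at once, $P_\xi$ is just $\varphi_N$ applied to a periodic domain of the $\xi^0$-triple. Its coefficients are transported, not amplified, so $P$ has coefficients bounded independently of $N$. The first stage therefore says nothing about these domains. Weak admissibility of each triple separately does not help either, since two domains that each have both signs can add up to a nonnegative one.

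\textbf{Why the closing idea is incomplete.} Making the jumps depend on the multiplicities of individual $\xi$ curves is the right idea, but as stated it is not uniform in the domain. A domain whose $\xi$-multiplicities are tiny compared with its $\eta$-part only shows both signs once $N$ times the former beats the latter, and that ratio is unbounded over the lattice of periodic domains. You need a second regime: domains close to the $\eta$-subspace must be shown to have both signs outside the winding regions, uniformly in $N$, using a quantitative form of the weak admissibility of the $\eta$-triple.

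\textbf{How the paper closes the gap.} It takes a basis $D_1,\dots,D_{n_0}$ of the $\eta$-periodic domains. It extends this by domains $D_i^j$ ($1\le j\le m_i$) whose boundaries have multiplicity $1$ on $\xi_{ij}$ and $0$ on $\xi_{ik}$ for $k\neq j$, $k\le m_i$. It then winds each such $\xi_{ij}$ individually, in both directions, along a geometric dual $\rho_{ij}$. The duals may cross the $\eta$ curves; what matters is that the strands in the winding region are parallel copies of the single curve $\xi_{ij}$. The jumps there are therefore governed by the coefficient of $D_i^j$ rather than by a homological pairing. Write $E=E_0+E_1+E_2+E_3$, with coefficient size $q$ on the $\eta$-basis and $p$ on the $D_i^j$. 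Either $Np>n_0qM'+(m_1+m_2+m_3)pM$, and some winding region exhibits both signs. Or $q$ dominates $p$, and the bound $\|E_0\|_{\pm}\ge C\|E_0\|_\infty$ from \cite[Lemma 4.13]{OS04a} gives both signs outside the winding regions. For $N$ large one of the two always holds. With this argument your homology-detecting twist becomes unnecessary.

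\textbf{Two smaller points.} In the $(\delta,\xi)$ datum, ``the former has both signs and the latter has both signs'' is the same fallacy. Argue instead that $P$ coincides with the $\delta$-domain $Q$ outside the thin strips, so $P\ge0$ forces $Q\ge 0$, hence $Q=0$. Also, a one-directional $N$-fold twist along $c_k$ is a power of a Dehn twist. It changes the homology class of $\xi_i$ whenever $\delta_i$ meets $c_k$ algebraically. Only compensated twisting, with opposite twists along two parallel copies of $c_k$, is isotopic to the identity.
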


{\bf{Proof:}}

 First we introduce some notations. Given a periodic domain $D$ we write $||D||_{+}$ to be its minimal non-negative multiplicities, $||D||_-$ to be the absolute value of its maximal non-positive multiplicities, $||D||_\infty=\operatorname{max}(||D||_-,||D||_+)$, and $||D||_{\pm}=\operatorname{min}(||D||_-,||D||_+)$. For $\xi_i$ curves we write $\xi_i=\{\xi_{i1},\xi_{i2},\hdots, \xi_{ig}\}$, and similar notations for other sets of curves.
 
First we set $\xi_i$ to be small Hamiltonian isotopy of $\delta_i$, then it is easy to see that the Heegaard datum $(\Sigma,\delta_1,\delta_2,\delta_3,\xi_1,\xi_2,\xi_3,z)$ is weakly admissible. 

 Next we achieve the weak admissibility for $(\Sigma,\xi_1,\xi_2,\xi_3,\eta_1,\eta_2,\eta_3,z)$. For this sake we work with domain with rational coefficients.  Choose a $\QQ$-basis $D_1\hdots,D_{n_0}$ for the periodic domains in $(\Sigma,\eta_1,\eta_2,\eta_3,z)$ and complete it into a basis of the periodic domains of $(\Sigma,\xi_1,\xi_2,\xi_3,\eta_1,\eta_2,\eta_3,z)$ by adding $D^1_{i},\hdots, D^{m_i}_i$ for $i=1,2,3$, such that (rearrange $\xi_{i1},\xi_{i2},\hdots, \xi_{ig}$ if necessary) for $1\le j, k\le m_i$, the multiplicity of $\partial D^j_i$ at $\xi_{ik}$ is $1$ if $j=k$, and $0$ else. Before winding we denote $M_i=\max_{1\le j\le m_i}(||D^j_{i}||_\infty)$, $M'=\max_{1\le i\le n_0}(||D_i||_\infty)$ and $M=\operatorname{max}(M_1,M_2,M_3)$.
 For each $i=1,2,3$ we choose set of simple closed curve $\rho_i=\{\rho_{i1},\hdots,\rho_{ig}\}$ which is a geometric dual of $\xi_i$, that is $\rho_{ik}$ intersects $\xi_{ik}$ transversely at one point, and $\rho_{ik}$ and $\rho_{il}$ are disjoint for $k\neq l$.

Now we wind the $\xi_{ij}$ curves along $\rho_{ij}$ for $1\le i\le 3, 1\le j\le m_i$ in a small neighborhood of $\rho_{ij}$ in both directions, and in the wound diagram, we still denote the domains corresponding to $D_k$, $D_i^l$ by $D_k$, $D_i^l$. For any sufficiently large $N$, after winding sufficiently many times, we can achieve that for every index $1\le i\le 3, 1\le j\le m_i$, both $N$ and $-N$ appear as multiplicities in the winding region of $D_{i}^j$. 

Now given linear combinations $E_0=\sum_{j=1}^{n_0}{q_j}{D_j}$, and $E_i=\sum_{j=1}^{m_i}{p^j_i}{D^j_i}$ for $i=1,2,3$, denote $E=E_0+E_1+E_2+E_3$, $q=\operatorname{max}_{1\le j\le n_0}(|q_j|)$, $p_i=\operatorname{max}_{1\le j\le m_i}(|p^j_i|)$, $p=\operatorname{max}(p_1,p_2,p_3)$. 
Following \cite[Lemma 4.13]{OS04a}, since $(\Sigma,\eta_1,\eta_2,\eta_3,z)$ is weakly admissible, there exists a constant $C$ such that $||E_0||_{\pm}\ge C||E_0||_\infty$. Since $||\cdot||_\infty$ and $\operatorname{max}_{1\le j\le m_i}(|p^j_i|)$ define equivalent norms, we have $||E_0||_{\pm}\ge C'p$ for some constant $C'$ not depending on $N$.
We have the following estimations:
\begin{enumerate}
	\item[1)]Assume $D=D_1^1$ achieves the maximal value in $p=\operatorname{max}(p_1,p_2,p_3)$. Without loss of generality assume $p^1_1$ is non-negative. Consider a region supported in the winding region of $D$ of multiplicity $N$, for a point $a$ in $D$ not in other winding regions, the multiplicity for $E-pD$ at $a$ is at least $-n_0qM'-(m_1+m_2+m_3)pM$. Thus, if 
	\begin{equation}\label{es1}
	Np>n_0qM'+(m_1+m_2+m_3)pM
	\end{equation}
	then $E$ has at least one positive multiplicity. By the same analysis if equation (\ref{es1}) holds then $E$ also has at least one negative multiplicity.
	\item[2)]Assume $D=D_1$ achieves the maximal value in $q=\operatorname{max}_{1\le j\le n_0}(|q_j|)$. Without loss of generality assume $p_1$ is non-negative. By the same argument as above if $||E||_{\pm}>(m_1+m_2+m_3)pM$, then $E$ has both positive and negative multiplicities. Since $||E_0||_{\pm}\ge C'p$,
	\begin{equation}\label{es2}
		C'q>(m_1+m_2+m_3)pM
		\end{equation} suffices for the existence of both positive and negative multiplicities,
\end{enumerate}

However, one of equations (\ref{es1}) and (\ref{es2}) must hold if $N$ is large enough. This proves that $(\Sigma,\xi_1,\xi_2,\xi_3,\eta_1,\eta_2,\eta_3,z)$ is weakly admissible. Since we only wind $\xi_i$ in small neighborhoods of some curves, $(\Sigma,\delta_1,\delta_2,\delta_3,\xi_1,\xi_2,\xi_3,z)$ is still weakly admissible.
\qed

\

{\bf{Proof of Theorem \ref{main}}:}\label{proofmain}
At first we do small perturbations to the Heegaard datum $(\Sigma,\beta,\beta',\beta'',z)$, by Proposition \ref{admissible} $(\Sigma,\beta,\beta',\beta'',z)$ is automatically weakly admissible. Since the Heegaard datum $(\Sigma,\beta,\beta',\beta'',\tbeta,\tbeta'\,\tbeta'', z)$ may not be weakly admissible, by lemma \ref{intermediate} we choose an intermediate Heegaard datum $(\Sigma, \gamma,\gamma',\gamma'',z)$ such that $\gamma,\gamma',\gamma''$ are obtained from $\beta,\beta',\beta''$ by handleslides and isotopy and both the Heegaard data $(\Sigma, \beta,\beta',\beta'',\gamma,\gamma',\gamma'',z)$ and $(\Sigma, \gamma,\gamma',\gamma'', \tbeta,\tbeta', \tbeta'', z)$ are weakly admissible. We choose an equivariant admissible complex structure and modify the perturbations $\beta,\beta',\beta''$ so that the result of Proposition \ref{same} holds, as in subsection \ref{choices}. Then by Proposition \ref{iso} we have the following diagrams, which should be viewed as chain maps $\Psi_1=\mu^{Tw}_2(\cdot, \Theta): \CFh(\alpha, \twibeta)\to \CFh(\alpha, \twigamma)$ and $\Psi_2=\mu^{Tw}_2(\cdot,\Theta):\CFh(\alpha,\twigamma)\to \CFh(\alpha,\twitbeta)$ respectively, with the indicated vertical components:
	\begin{equation}\label{compare1}
	\begin{tikzcd}[column sep=huge, row sep=large]
		\CFh(\alpha, \beta) \arrow[r,"{\mu_2(\cdot, \Theta_{\beta\beta'})}"]\arrow[d,"{\mu_2(\cdot, \Theta_{\beta\gamma})}"]\arrow[rd]\arrow[rrd]\arrow[rr,bend left]&\CFh(\alpha,\beta') \arrow[r,"{\mu_2(\cdot,\Theta_{\beta'\beta''})}"]\arrow[rd]\arrow[d,"{\mu_2(\cdot, \Theta_{\beta'\gamma'})}",crossing over]&\CFh(\alpha,\beta'')\arrow[d,"{\mu_2(\cdot,\Theta_{\beta''\gamma''})}"]\\
		\CFh(\alpha,\gamma)\arrow[r,"{\mu_2(\cdot,\Theta_{\gamma\gamma'})}"]\arrow[rr, bend right]&\CFh(\alpha,\gamma')\arrow[r,"{\mu_2(\cdot,\Theta_{\gamma'\gamma''})}"]&\CFh(\alpha,\gamma'')\\
	\end{tikzcd}
\end{equation}
	\begin{equation}\label{compare2}
	\begin{tikzcd}[column sep=huge, row sep=large]
		\CFh(\alpha, \gamma) \arrow[r,"{\mu_2(\cdot, \Theta_{\gamma\gamma'})}"]\arrow[d,"{\mu_2(\cdot, \Theta_{\gamma\tbeta})}"]\arrow[rd]\arrow[rrd]\arrow[rr,bend left]&\CFh(\alpha,\gamma') \arrow[r,"{\mu_2(\cdot,\Theta_{\gamma'\gamma''})}"]\arrow[rd]\arrow[d,"{\mu_2(\cdot, \Theta_{\gamma'\tbeta'})}",crossing over]&\CFh(\alpha,\gamma'')\arrow[d,"{\mu_2(\cdot,\Theta_{\gamma''\tbeta''})}"]\\
		\CFh(\alpha,\tbeta)\arrow[r,"{\mu_2(\cdot,\Theta_{\tbeta\tbeta'})}"]\arrow[rr, bend right]&\CFh(\alpha,\tbeta')\arrow[r,"{\mu_2(\cdot,\Theta_{\tbeta'\tbeta''})}"]&\CFh(\alpha,\tbeta'')\\
	\end{tikzcd}
\end{equation}

Now since the vertical chains maps are all quasi-isomorphisms, by the filtration induced form the columns of diagrams (\ref{compare1}) and (\ref{compare2}), the maps on homology $\Psi_1: \HFh(\alpha, \twibeta)\to \HFh(\alpha, \twigamma)$ and $\Psi_2:\HFh(\alpha,\twigamma)\to \HFh(\alpha,\twitbeta)$ are also quasi-isomorphisms. From the classical context of exact triangle(for example \cite[Theorem 4.7]{OS05}) we know $\HFh(\alpha,\twitbeta)$ is trivial, thus $\HFh(\alpha, \twibeta)$ is also trivial, which means that the map

\begin{align}\label{map}
	&\Cone(\CFh(\alpha, \beta)\xrightarrow{\mu_2(\cdot,\Theta_{\beta\beta'})}\CFh(\alpha,\beta'))\to \CF(\alpha,\beta'')\\
	&(x,y)\mapsto \mu_2(x,\theta)+\mu_3(x,\Theta_{\beta\beta'},\Theta_{\beta'\beta''})+\mu_2(y,\Theta_{\beta'\beta''})
\end{align}
is a quasi isomorphism. 

To conclude the proof, we can apply Proposition \ref{same}, which says that all the maps in the upper horizontal line of diagram (\ref{compare2}) is $\ZZ/2\ZZ$-equivariant by identifying perturbed chain complexes $\CFh(\alpha,\beta)$, $\CFh(\alpha,\beta')$, $\CFh(\alpha,\beta'')$ with the unperturbed chain complexes $\CFh(\pralpha,\prbeta)$, $\CFh(\pralpha,\prbeta')$, $\CFh(\pralpha,\prbeta'')$,  where the latter are used to define the equivariant Heegaard Floer homology $\HFheq$. This is guaranteed by Proposition \ref{same}, and the choices of the almost complex structure, and the choices we made for the generators $\Theta_{\beta\beta'}$, $\Theta_{\beta'\beta''}$, which are close to invariant generators in the corresponding pre-involutive diagrams(notice that $\theta$ is actually $0$ since there is no generator in the perturbed diagram $(\Sigma,\beta,\beta',\beta'',z)$ of grading $1$). Thus, the maps in equation (\ref{map}) are also equivariant in the above sense. Theorem \ref{main} is proved by identifying $\CFh(\alpha,\beta)$, $\CFh(\alpha,\beta')$, $\CFh(\alpha,\beta'')$ back with $\CFh(\pralpha,\prbeta)$, $\CFh(\pralpha,\prbeta')$, $\CFh(\pralpha,\prbeta'')$ by Proposition \ref{same}.
\qed

\begin{remark}
	For readers not familiar with the language of $A_\infty$ category, we state Proposition \ref{iso} and the proof of the main theorem in the classical setting.
	The sentence ``Then we can complete $\Theta_{\gamma\tgamma}$, $\Theta_{\gamma'\tgamma'}$ and $\Theta_{\gamma''\tgamma''}$ into a cycle between twisted complexes by choosing the diagonal chains appropriately'' in Proposition \ref{iso} means there exists $\Theta_{(0,1)}\in \CFh(\gamma,\tgamma')$, $\Theta_{(1,2)}\in\CFh(\gamma',\tgamma'')$ and 
	$\Theta_{(0,2)}\in \CFh(\gamma,\tgamma'')$ such that the following equations hold:
		\begin{align}
		&\mu_1(\Theta_{(0,1)})=\mu_2(\Theta_{\gamma\gamma'},\Theta_{\gamma'\tgamma'})+\mu_2(\Theta_{\gamma\tgamma}, \Theta_{\tgamma\tgamma'})\label{Theta01}\\
		&\mu_1(\Theta_{(1,2)})=\mu_2(\Theta_{\gamma'\gamma''},\Theta_{\gamma''\tgamma''})+\mu_2(\Theta_{\gamma'\tgamma'}, \Theta_{\tgamma'\tgamma''})\label{Theta12}\\
		&\mu_1(\Theta_{(0,2)})=\mu_2(\theta,\Theta_{\gamma''\tgamma''})+\mu_2(\Theta_{\gamma\tgamma},\ttheta)+\mu_2(\Theta_{\gamma\gamma'},\Theta_{(1,2)})+\mu_2(\Theta_{(0,1)},\Theta_{\tgamma'\tgamma''})+\mu_3(\Theta_{\gamma\gamma'},\Theta_{\gamma'\gamma''},\Theta_{\gamma''\tgamma''})\label{Theta02}\\
		&+\mu_3(\Theta_{\gamma\tgamma},\Theta_{\tgamma\tgamma'},\Theta_{\tgamma'\tgamma''})+\mu_3(\Theta_{\gamma\gamma'},\Theta_{\gamma'\tgamma'},\Theta_{\tgamma'\tgamma''})\nonumber
		\end{align}

	Now we want to verify that 

		\begin{align}\label{verify}
			&\mu_1(\mu_2(\theta,\Theta_{\gamma''\tgamma''})+\mu_2(\Theta_{\gamma\tgamma},\ttheta)+\mu_2(\Theta_{\gamma\gamma'},\Theta_{(1,2)})+\mu_2(\Theta_{(0,1)},\Theta_{\tgamma'\tgamma''})+\mu_3(\Theta_{\gamma\gamma'},\Theta_{\gamma'\gamma''},\Theta_{\gamma''\tgamma''})\\
			&+\mu_3(\Theta_{\gamma\tgamma},\Theta_{\tgamma\tgamma'},\Theta_{\tgamma'\tgamma''})+\mu_3(\Theta_{\gamma\gamma'},\Theta_{\gamma'\tgamma'},\Theta_{\tgamma'\tgamma''}))=0,\nonumber
		\end{align}
which corresponds to the sentence `` which means that $\mu^{Tw}_1(\Theta')=\mu^{Tw}_1(\Theta')_{(0,2)}$ is a cycle in the  grading $1$ part of $\CFh(\gamma,\tgamma'')$'' is the proof of Proposition \ref{iso}.
	Notice that all the horizontal and vertical chains of length $1$ in Figure (\ref{cycle}) are cycles, thus:
\begin{equation}
		\begin{aligned}\label{part1}
		     &\mu_1(\mu_2(\Theta_{\gamma\gamma'},\Theta_{(1,2)}))=\mu_2(\Theta_{\gamma\gamma'},\mu_1(\Theta_{(1,2)}))\\
		     &=\mu_2(\Theta_{\gamma\gamma'},\mu_2(\Theta_{\gamma'\gamma''},\Theta_{\gamma''\tgamma''}))+\mu_2(\Theta_{\gamma\gamma'},\mu_2(\Theta_{\gamma'\tgamma'}, \Theta_{\tgamma'\tgamma''}))\\
		     &\mu_1(\mu_2(\Theta_{(0,1)},\Theta_{\tgamma'\tgamma''}))=\mu_2(\mu_1(\Theta_{(0,1)}),\Theta_{\tgamma'\tgamma''})\\
		     &=\mu_2(\mu_2(\Theta_{\gamma\gamma'},\Theta_{\gamma'\tgamma'}),\Theta_{\tgamma'\tgamma''})+\mu_2(\mu_2(\Theta_{\gamma\tgamma}, \Theta_{\tgamma\tgamma'}),\Theta_{\tgamma'\tgamma''}))
		\end{aligned}
	\end{equation}
	Also:
\begin{equation}
	   \begin{aligned}\label{part2}
	   	&\mu_1(\mu_2(\theta,\Theta_{\gamma''\tgamma''}))=\mu_2(\mu_1(\theta),\Theta_{\gamma''\tgamma''})\\
	   	&=\mu_2(\mu_2(\Theta_{\gamma\gamma'},\Theta_{\gamma'\gamma''}),\Theta_{\gamma''\tgamma''})\\
	   	&\mu_1(\mu_2(\Theta_{\gamma\tgamma},\ttheta))=\mu_2(\Theta_{\gamma\tgamma},\mu_1(\ttheta))\\
	   	&=\mu_2(\Theta_{\gamma\tgamma},\mu_2(\Theta_{\tgamma\tgamma'},\Theta_{\tgamma'\tgamma''}))
	   \end{aligned}
\end{equation}
	And:
\begin{equation}
	\begin{aligned}\label{part3}
		&\mu_1(\mu_3(\Theta_{\gamma\gamma'},\Theta_{\gamma'\gamma''},\Theta_{\gamma''\tgamma''}))=\mu_2(\mu_2(\Theta_{\gamma\gamma'},\Theta_{\gamma'\gamma''}),\Theta_{\gamma''\tgamma''})+\mu_2(\Theta_{\gamma\gamma'},\mu_2(\Theta_{\gamma'\gamma''},\Theta_{\gamma''\tgamma''}))\\
		&\mu_1(\mu_3(\Theta_{\gamma\tgamma},\Theta_{\tgamma\tgamma'},\Theta_{\tgamma'\tgamma''}))=\mu_2(\mu_2(\Theta_{\gamma\tgamma},\Theta_{\tgamma\tgamma'}),\Theta_{\tgamma'\tgamma''})+\mu_2(\Theta_{\gamma\tgamma},\mu_2(\Theta_{\tgamma\tgamma'},\Theta_{\tgamma'\tgamma''}))\\
		&\mu_1(\mu_3(\Theta_{\gamma\gamma'},\Theta_{\gamma'\tgamma'},\Theta_{\tgamma'\tgamma''}))=\mu_2(\mu_2(\Theta_{\gamma\gamma'},\Theta_{\gamma'\tgamma'}),\Theta_{\tgamma'\tgamma''})+\mu_2(\Theta_{\gamma\gamma'},\mu_2(\Theta_{\gamma'\tgamma'},\Theta_{\tgamma'\tgamma''}))
	\end{aligned}
\end{equation}
\end{remark}
Now adding up equations (\ref{part1}), (\ref{part2}), (\ref{part3}) we get equation (\ref{verify}).

Then we turn to the proof of Theorem \ref{main}. In the remaining of this section we will not distinguish between $\theta$ and $\ttheta$ and write both of them as $\theta$. The core part of the proof of Theorem \ref{main} is to construct quasi-isomorphisms $\Psi_1: \CFh(\alpha, \twibeta)\to \CFh(\alpha, \twigamma)$ and $\Psi_2:\CFh(\alpha,\twigamma)\to \CFh(\alpha,\twitbeta)$. We only do this for $\Psi_1$ since the case of $\Psi_2$ is similar. Recall that the underlying group of the chain complex $\CFh(\alpha, \twibeta)$ is $\CFh(\alpha,\beta)\oplus\CFh(\alpha,\beta')\oplus \CFh(\alpha,\beta'')$, with the chain complex structure given as the mapping cone of $\CFh(\alpha,\beta)\oplus\CFh(\alpha,\beta')\to \CFh(\alpha,\beta'')$ as in equation (\ref{map}), where $\CFh(\alpha,\beta)\oplus\CFh(\alpha,\beta')$ has the chain complex structure given by the chain map $\mu_2(\cdot,\Theta_{\beta\beta'}):\CFh(\alpha,\beta)\to\CFh(\alpha,\beta')$. Similar statement also holds for $\CFh(\alpha, \twigamma)$. 

Next we spell out the explicit chain map $\Psi_1$ using the cycle $\Theta$ we constructed in Proposition \ref{iso}. Like the notation for $\Theta$, we write the $(i,j)$ component of $\Psi_1$ as $\Psi_{1(i,j)}$($0\le i\le j\le2$).

	\begin{align}
		&\Psi_{1(0,0)}=\mu_2(\cdot,\Theta_{\beta\gamma}), \Psi_{1(1,1)}=\mu_2(\cdot,\Theta_{\beta'\gamma'}), \Psi_{1(2,2)}=\mu_2(\cdot,\Theta_{\beta''\gamma''})\\
		&\Psi_{1(0,1)}=\mu_2(\cdot,\Theta_{(0,1)})+\mu_3(\cdot,\Theta_{\beta\beta'},\Theta_{\beta'\gamma'})+\mu_3(\cdot,\Theta_{\beta\gamma},\Theta_{\gamma\gamma'})\\
		&\Psi_{1(1,2)}=\mu_2(\cdot,\Theta_{(1,2)})+\mu_3(\cdot,\Theta_{\beta'\beta''},\Theta_{\beta''\gamma''})+\mu_3(\cdot,\Theta_{\beta'\gamma'},\Theta_{\gamma'\gamma''})\\
		&\Psi_{1(0,2)}=\mu_2(\cdot,\Theta_{(0,2)})+\mu_3(\cdot,\theta,\Theta_{\beta''\gamma''})+\mu_3(\cdot,\Theta_{\beta\gamma},\theta)\\
		&+\mu_3(\cdot,\Theta_{\beta\beta'},\Theta_{(1,2)})+\mu_3(\cdot,\Theta_{(0,1)},\Theta_{\gamma'\gamma''})+\mu_4(\cdot,\Theta_{\beta\beta'},\Theta_{\beta'\beta''},\Theta_{\beta''\gamma''})\nonumber\\
		&+\mu_4(\cdot,\Theta_{\beta\beta'},\Theta_{\beta'\gamma'},\Theta_{\gamma'\gamma''})+\mu_4(\cdot,\Theta_{\beta\gamma},\Theta_{\gamma\gamma'},\Theta_{\gamma'\gamma''}).\nonumber
	\end{align}

Next we verify that $\Psi_{1}$ is a chain map.

On $(0,0), (1,1)$ and $(2,2)$ components we need to verify that 
\begin{align}
	&\mu_2(\mu_1(\cdot),\Theta_{\beta\gamma})=\mu_1(\mu_2(\cdot,\Theta_{\beta\gamma}))\nonumber\\
	&\mu_2(\mu_1(\cdot),\Theta_{\beta'\gamma'})=\mu_1(\mu_2(\cdot,\Theta_{\beta'\gamma'}))\nonumber\\
	&\mu_2(\mu_1(\cdot),\Theta_{\beta''\gamma''})=\mu_1(\mu_2(\cdot,\Theta_{\beta''\gamma''}))\nonumber
	\end{align}
	respectively, which follow from that  $\Theta_{\beta\gamma}, \Theta_{\beta'\gamma'}$ and $\Theta_{\beta''\gamma''}$ are cycles, respectively.

On $(0,1)$ and $(1,2)$ components we need to verify
	\begin{align}
			&\mu_2(\mu_1(\cdot),\Theta_{(0,1)})+\mu_3(\mu_1(\cdot),\Theta_{\beta\beta'},\Theta_{\beta'\gamma'})+\mu_3(\mu_1(\cdot),\Theta_{\beta\gamma},\Theta_{\gamma\gamma'})+\mu_2(\mu_2(\cdot,\Theta_{\beta\beta'}),\Theta_{\beta'\gamma'})\label{01}\\
		&=\mu_1(\mu_2(\cdot,\Theta_{(0,1)}))+\mu_1(\mu_3(\cdot,\Theta_{\beta\beta'},\Theta_{\beta''\gamma'}))+\mu_1(\mu_3(\cdot,\Theta_{\beta\gamma},\Theta_{\gamma\gamma'}))+\mu_2(\mu_2(\cdot,\Theta_{\beta\gamma}),\Theta_{\gamma\gamma'})\nonumber\\
	&\mu_2(\mu_1(\cdot),\Theta_{(1,2)})+\mu_3(\mu_1(\cdot),\Theta_{\beta'\beta''},\Theta_{\beta''\gamma''})+\mu_3(\mu_1(\cdot),\Theta_{\beta'\gamma'},\Theta_{\gamma'\gamma''})+\mu_2(\mu_2(\cdot,\Theta_{\beta'\beta''}),\Theta_{\beta''\gamma''})\label{12}\\
	&=\mu_1(\mu_2(\cdot,\Theta_{(1,2)}))+\mu_1(\mu_3(\cdot,\Theta_{\beta'\beta''},\Theta_{\beta''\gamma''}))+\mu_1(\mu_3(\cdot,\Theta_{\beta'\gamma'},\Theta_{\gamma'\gamma''}))+\mu_2(\mu_2(\cdot,\Theta_{\beta'\gamma'}),\Theta_{\gamma'\gamma''})\nonumber
	\end{align}
where the differences of the two sides of equation (\ref{01}) and (\ref{12}) are the $(0,1)$ and $(1,2)$ components of $\Psi_1\underline{\partial}(\cdot)+\underline{\partial}\Psi_1(\cdot)$. Equation (\ref{01}) follows from equation (\ref{Theta01}) and $A_\infty$ relations, and equation (\ref{12}) follows equation (\ref{Theta12}) and $A_\infty$ relations.

On the $(0,2)$ component we need to verify that:
	\begin{align}
		&\mu_2(\mu_1(\cdot),\Theta_{(0,2)})+\mu_3(\mu_1(\cdot),\Theta_{\beta\gamma},\theta)+\mu_3(\mu_1(\cdot),\theta,\Theta_{\beta''\gamma''})\label{02}\\
		&+\mu_3(\mu_1(\cdot),\Theta_{\beta\beta'},\Theta_{(1,2)})+\mu_3(\mu_1(\cdot),\Theta_{(0,1)},\Theta_{\gamma'\gamma''})\nonumber\\
		&+\mu_4(\mu_1(\cdot),\Theta_{\beta\beta'},\Theta_{\beta'\beta''},\Theta_{\beta''\gamma''})+\mu_4(\mu_1(\cdot),\Theta_{\beta\beta'},\Theta_{\beta'\gamma'},\Theta_{\gamma'\gamma''})+\mu_4(\mu_1(\cdot),\Theta_{\beta\gamma},\Theta_{\gamma\gamma'},\Theta_{\gamma'\gamma''})\nonumber\\
		&+\mu_2(\mu_2(\cdot,\Theta_{\beta\beta'}),\Theta_{(1,2)})+\mu_3(\mu_2(\cdot,\Theta_{\beta\beta'}),\Theta_{\beta'\beta''},\Theta_{\beta''\gamma''})+\mu_3(\mu_2(\cdot,\Theta_{\beta\beta'}),\Theta_{\beta'\gamma'},\Theta_{\gamma',\gamma''})\nonumber\\
		&+\mu_2(\mu_2(\cdot,\theta),\Theta_{\beta''\gamma''})+\mu_2(\mu_3(\cdot,\Theta_{\beta\beta'},\Theta_{\beta'\beta''}),\Theta_{\beta''\gamma''})\nonumber\\
		&=\mu_1(\mu_2(\cdot,\Theta_{(0,2)})+\mu_1(\mu_3(\cdot,\theta,\Theta_{\beta''\gamma''}))+\mu_1(\mu_3(\cdot,\Theta_{\beta\gamma},\theta))\nonumber\\
		&+\mu_1(\mu_3(\cdot,\Theta_{\beta\beta'},\Theta_{(1,2)})))+\mu_1(\mu_3(\cdot,\Theta_{(0,1)},\Theta_{\gamma'\gamma''}))\nonumber\\
		&+\mu_1(\mu_4(\cdot,\Theta_{\beta\beta'},\Theta_{\beta'\beta''},\Theta_{\beta''\gamma''}))+\mu_1(\mu_4(\cdot,\Theta_{\beta\beta'},\Theta_{\beta'\gamma'},\Theta_{\gamma'\gamma''}))+\mu_1(\mu_4(\cdot,\Theta_{\beta\gamma},\Theta_{\gamma\gamma'},\Theta_{\gamma'\gamma''}))\nonumber\\
		&+\mu_2(\mu_2(\cdot,\Theta_{(0,1)}),\Theta_{\gamma'\gamma''})+\mu_2(\mu_3(\cdot,\Theta_{\beta\beta'},\Theta_{\beta'\gamma'}),\Theta_{\gamma'\gamma''})+\mu_2(\mu_3(\cdot,\Theta_{\beta\gamma},\Theta_{\gamma\gamma'}),\Theta_{\gamma'\gamma''})\nonumber\\
		&+\mu_2(\mu_2(\cdot,\Theta_{\beta\gamma}),\theta)+\mu_3(\mu_2(\cdot,\Theta_{\beta\gamma}),\Theta_{\gamma\gamma'},\Theta_{\gamma'\gamma''})\nonumber.
		\end{align}
		
	   The difference of the two sides of equation (\ref{02}) is the $(0,2)$ component of $\Psi_1\underline{\partial}(\cdot)+\underline{\partial}\Psi_1(\cdot)$. Equation (\ref{02}) follows from equation (\ref{Theta02}) as well as $A_\infty$ relations.
		\section[spectral]{Proof of The Spectral Sequence}
The proof of the spectral sequence Theorem \ref{spectral} is exactly modeled on Ozsv\'{a}th and Szab\'{o}'s paper \cite{OS05}, but here we use the language of $A_\infty$ category to simplify the algebra as in \cite{Nah25a}. 
\subsection{The bridge diagram for the spectral sequence}
Given a link $L$, we consider a generic projection of $L$, with crossing points labeled by $1$ through $k$. We choose some disjoint balls $D_1, \hdots, D_k$ containing the crossing points $1,\hdots k$ respectively. For each mult-index $I\in\Index$, we form a resolution link $L_I$ of $L$ as well as a division of $L_I$ into $A$, $B$ arcs satisfying the following conditions.
\begin{enumerate}
	\item[A1)]For all $I\in\Index$, $L_I$ as well as the $A$, $B$ arc division of $L_I$ are the same outside $D_1\cup D_2\hdots \cup D_k$.
	\item[A2)]For each $i=1,\hdots, k$, we identify $D_i$ with a disk containing the right of Figure \ref{skeindiagram} in an orientation preserving way. Under this identification $(D_i,L_I\cap D_i)$ as well as its $A$, $B$ arcs decomposition is determined by the $i$-th entry $I_i$ of $I$: If $I_i=\infty$, then $(D_i,L_I\cap D_i)$ and its $A$, $B$ arcs decomposition is identified with the $A, B'$ arcs in the right of Figure \ref{skeindiagram}; If $I_i=0$, then $(D_i,L_I\cap D_i)$ and its $A$, $B$ arcs decomposition is identified with the $A$, $B''$ arcs in the right of Figure \ref{skeindiagram}; If $I_i=1$, then $(D_i,L_I\cap D_i)$ and its $A$, $B$ arcs decomposition is identified with the $A, B'$ arcs in the right of Figure \ref{skeindiagram}. The reader can do isotopy to restore the usual figure of $0$, $1$, $\infty$ resolutions (as in Figure \ref{skein}) from the above arrangement.
\end{enumerate}    

Our next step is to do perturbations on the $B$ arcs so they permute in a specific way around the endpoints of the arcs.
We give the set $\Index$ a lexicographical order $<$ with the understanding $0<1<\infty$, and we give $\Index$ a total order $\prec$ which respects the lexicographical ordering(which can be constructed by picking a maximal element each time and induct).

We construct the perturbations as follows. Given a $B$ arc $B$(there may be many resolutions $L_I$ containing $B$), denote the two endpoints of $B$ by $b_1, b_2$. For each mult-index $I$ such that $L_I$ contains $B$, we wiggle $B$ while fixing $b_1$ and $b_2$ such that the interior of the resulting $B$ arc (denoted by $B'_I$) intersects the interior of $B$ in only one point, and the intersection is transverse; At the endpoints $b_1$ and $b_2$, the tangent vector of $B'_I$ is rotated counterclockwisely in a small angle(The reader may find it convenient to compare with the left of Figure \ref{skeindiagram}). Moreover, if $I\prec I'$ and both $L_I$ and $L_{I'}$ contain $B$, then the rotation angle of $B'_{I'}$ is larger than $B'_{I}$. We make the rotation angles small compare to the angle formed by different $A$, $B$ arcs at their common endpoints, so the wigglings of different $B$ arcs do not interfere with each other.

Now we take the branched double cover of the bridge diagram defined above as in section \ref{1.1}, and picking an endpoint $z$ of a $B$-arc outside the crossing regions, and denote the Heegaard datum by $(\Sigma,\balpha,\{\bbeta_I\}_{I\in\Index},z)$. Next we choose an admissible collection of equivaraint almost complex structures $\{J_n\}$ achieving transversality for the Heegaard datum $(\Sigma,\balpha,\{\bbeta_I\}_{I\in\Index},z)$ (Notice that the total order on $\{\bbeta_I\}_{I\in\Index}$ is given by $\prec$). The transversality can be achieved by equivariant almost complex structures thanks to the perturbations we constructed in the previous paragraph and Proposition \ref{transversality}. For each mult-indices $I<I'$, we also choose a chain $\Theta_{I\le I'}$ fixed under the involution which represents the generator of $\HFh(\beta_I,\beta_{I'})$ in grading $0$(like we did in subsection \ref{choices}). 

We will construct a twisted complex structure on $\underline{\{\beta_I\}_{I\in \Index}}$, that is elements $\theta_{I<I'}\in\CFh(\beta_I,\beta_{I'})$ for each mult-indices $I<I'$ such that
\begin{equation}\label{twist}
	\sum_{I=I_0<I_1<I_2\hdots<I_m=I'}\mu_m(\theta_{I_0<I_1}, \theta_{I_1<I_2}, \hdots, \theta_{I_{m-1}<I_m})=0,
\end{equation}
for each $I<I'$, where the summation is over all sequences $I=I_0<I_1<I_2\hdots<I_m=I'$. 

\

We introduce a little notations. We define a metric $d_0$ on $\{0,1,\infty\}$ such that $d(0,1)=d(1,\infty)=1$, $d(0,\infty)=2$. For indices $I\le I'$, we define their distance $d(I,I')=\sum_{i=1}^kd_0(I_i,I'_i)$.
We construct $F_{I\le L'}$ by induction on $d(I,I')$. 

If $d(I,I')=1$, define $\theta_{I<I'}=\Theta_{I<I'}\in\CFh(\beta_I,\beta_{I'})$.

\begin{proposition}\label{make}
	We can make $\theta_{I<I'}=0$ for all $I<I'$ with $d(I,I')\ge2$, so that equation (\ref{twist}) holds. 
\end{proposition}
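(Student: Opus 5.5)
The plan is to set $\theta_{I<I'}=\Theta_{I<I'}$ when $d(I,I')=1$ and $\theta_{I<I'}=0$ when $d(I,I')\ge 2$. With this choice, equation (\ref{twist}) for a pair $I<I'$ with $d(I,I')=m$ becomes
\[
\sum \mu_m(\Theta_{I_0<I_1},\hdots,\Theta_{I_{m-1}<I_m})=0,
\]
where the sum runs over all chains $I=I_0<I_1<\hdots<I_m=I'$ with every step of distance $1$. So it suffices to prove that this sum vanishes for every $m\ge 1$.

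For $m=1$ this is the statement that each $\Theta_{I<I'}$ is a cycle, which holds by choice. The cases $m\ge 2$ will be reduced to the unperturbed pre-involutive diagram via Proposition \ref{same}. The relevant polygons have $n_z=0$ and Maslov index $\le 4-(m+1)$, and for these the moduli spaces of the perturbed datum $(\Sigma,\balpha,\{\bbeta_I\},z)$ are identified with those of the pre-involutive datum. The $\Theta$'s were chosen as the near-fixed generators, i.e. perturbations of points of the discrete set $D$ of Lemma \ref{discrete}.

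\textbf{Grading step ($m\ge3$).} Since every input has grading $0$ and $\mu_m$ has degree $m-2$, the output lies in grading $m-2\ge 1$ of $\CFh(\beta_I,\beta_{I'})$. As already used at the end of the proof of Theorem \ref{main}, the small perturbation near the fixed points, arranged in the order fixed by $\prec$ and Condition \ref{condition}, has no generators of positive grading (the grading is normalized so that the top grading is $0$). Hence these terms vanish at the chain level. I would verify the absence of positive-grading generators by a local computation: generators are products of intersection points near fixed points, and the rotation ordering of the $B'_I$ arcs determines each local grading.

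\textbf{Index-zero step ($m=2$).} Here the output has grading $0$, so the holomorphic triangles actually have to be counted. By Proposition \ref{same} and the energy estimate used in Proposition \ref{transversality}, a Maslov index $0$ triangle with $n_z=0$ whose vertices are the near-fixed $\Theta$-generators corresponds to a constant triangle in the pre-involutive diagram. Equivalently, it is a product of small local triangles near the fixed points. There are two cases according to how $I$ and $I'$ differ.
\begin{enumerate}
\item[(a)] $I$ and $I'$ differ by $1$ in two distinct coordinates. Then there are exactly two intermediate $J$. At each fixed point the local configuration depends only on that crossing, so the two compositions produce the same collection of local triangles, and the two terms cancel mod $2$.
\item[(b)] $I$ and $I'$ differ in a single coordinate, changing $0\to\infty$ through $1$. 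Then there is a single term $\mu_2(\Theta_{I<J},\Theta_{J<I'})$. I would show that it vanishes because, at the fixed points of that crossing region, the counterclockwise arrangement of $\beta_I,\beta_J,\beta_{I'}$ from Condition \ref{condition} admits no small positive triangle with the prescribed top corners. Away from that region the local triangles are the trivial ones.
\end{enumerate}

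I expect case (b) to be the main obstacle. It requires an explicit local model at the fixed points inside $D_i$ (compare the right of Figure (\ref{skeindiagram})) showing that the orientation and ordering force the corner conditions to fail. It also requires checking that the three resolutions' curves there are arranged as Condition \ref{condition} demands. If this local count turned out to be nonzero, the fallback would be to reorder the rotation angles in the construction of the $B'_I$ so that the local triangle disappears.
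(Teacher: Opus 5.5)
Your choice of $\theta$, the reduction of (\ref{twist}) to sums of $\mu_m(\Theta_{I_0<I_1},\dots,\Theta_{I_{m-1}<I_m})$ over chains of distance-one steps, the case $m=1$, and the grading argument for $m\ge 3$ all agree with the paper. The gap is in the case $m=2$. It sits in your claim that, by Proposition \ref{same} and the energy estimate, every index-$0$ triangle with $n_z=0$ and corners at the $\Theta$'s is a constant triangle of the pre-involutive diagram. Proposition \ref{same} only identifies moduli spaces class by class. The energy estimate only says that the \emph{constant} homotopy class has no nonconstant representatives. Classes with the same corners differ by triply periodic domains, and nonconstant classes with $\mu=0$, $n_z=0$ and nonnegative domain do occur.

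In a crossing region this is in fact all there is. The three resolutions pair the four branch points in the three different ways, and the generators for any two of the pairings are the pairs of the third pairing. So no tuple is a generator for all three pairs, and no Whitney triangle there is a product of small local triangles, i.e.\ none is constant in the pre-involutive diagram. Whatever contributes to $\mu_2(\Theta_{I<J},\Theta_{J<I'})$ is built from the large triangles that the lifted curves cut out of that region, combined with small triangles elsewhere. These are the triangles of the Ozsv\'ath--Szab\'o exact-triangle computation, and their total vanishes by cancellation in pairs, not by local non-existence. So your case (b) argument does not establish the identity, even though its conclusion is true. Case (a) has the same hole, since nonconstant classes are again ignored. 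The fallback of reordering the rotation angles is not available either: Condition \ref{condition} is what makes constant polygons index $0$ and transversely cut out, which Propositions \ref{transversality} and \ref{same} rely on.

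What is missing is to combine homology with the grading fact you already use for $m\ge 3$. In case (b), $\mu_2(\Theta_{I<J},\Theta_{J<I'})$ is null-homologous by handleslide invariance of triangle maps and the standard computation, as in Lemma \ref{lem}. In case (a), each of the two compositions represents the top generator of $\HFh(\beta_I,\beta_{I'})$, since triangle maps with top generators between $\#^l S^1\times S^2$ diagrams are quasi-isomorphisms sending top generator to top generator. Their sum is therefore null-homologous, as in the second paragraph of the proof of Proposition \ref{iso}. Since $\CFh(\beta_I,\beta_{I'})$ has no nonzero chains of positive degree, a null-homologous degree-$0$ chain is $\mu_1$ of a degree-$1$ chain, hence zero. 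This gives (\ref{twist}) with $\theta_{I<I'}=0$ without counting any triangles, and it is exactly the paper's argument.
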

{\bf Proof:}
The key point is that the chain complexes $\CFh(\beta_I,\beta_{I'})$ has no non-zero element with degree greater than $0$(by inspecting Figure \ref{skeindiagram}). When  $d(I,I')=1$, equation (\ref{twist}) follows from that $\theta_{I<I'}=\Theta_{I<I'}$ is a cycle. When $d(I,I')=2$, equation (\ref{twist}) is proved by the same methods as in Lemma \ref{lem} or the second paragraph of the proof Proposition \ref{iso}, depending on $I$ and $I'$ differ on one entry or two. When $d(I,I')=3$, the left side of equation (\ref{twist}) is zero because if $m\ge3$ then the outcome of $\mu_m$ lies in degree $m-2>0$; If $m=2$, then one of the $\theta_{I<I'}$ must be zero. If $m=1$ $\mu_1(\theta_{I<I'})=0$ since $\theta_{I<I'}=0$.\qed

Now we can form the big chain complex
\[\CFh(\alpha, \underline{\{\beta_I\}_{I\in \Index}})=\bigoplus_{I\in\Index}\CFh(\alpha, \beta_I),\]
and whose differential $D$, is defined by the sum of 
\begin{align}
	&F_{I\le I'}:\CFh(\alpha,\beta_I)\to\CFh(\alpha,\beta_{I'})\\\nonumber
	&x\mapsto \sum_{I=I_0<I_1<\hdots<I_m=I'}\mu_m(x,\theta_{I_0<I_1},\hdots,\theta_{I_{m-1}<I_m})
	\end{align}
	
	for all mult-indices $I\le I'$.
	
	\begin{proposition}
		$D$ is a differential on $\CFh(\alpha, \underline{\{\beta_I\}_{I\in \Index}})$.
	\end{proposition}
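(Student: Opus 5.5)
We need $D\circ D=0$, where $D=\sum_{I\le I'}F_{I\le I'}$. This is the standard fact that the $\mu_1^{\operatorname{Tw}}$-differential on $\CFh(\alpha,\underline{\{\beta_I\}})$ squares to zero once the $\theta$'s satisfy the twisted complex equation (\ref{twist}). We will check it directly, component by component, using only the $A_\infty$ relations of the Fukaya-type category. These relations are available because $\{J_n\}$ achieves transversality (Proposition \ref{transversality}), the datum is weakly admissible (Proposition \ref{admissible}), and in the hat version there is no boundary degeneration or sphere bubbling.

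\textbf{Step 1 (reduce to one component).} Fix $I\le I'$. The $(I,I')$ component of $D\circ D$, evaluated on $x\in\CFh(\alpha,\beta_I)$, is
\[\sum_{I\le J\le I'}F_{J\le I'}(F_{I\le J}(x)).\]
Here $F_{I\le I}=\mu_1$, the $m=0$ term of the defining sum. Expanding, this is the sum over chains $I=I_0<\dots<I_m=I'$ and split points $0\le j\le m$ of
\[\mu_{m-j+1}\bigl(\mu_{j+1}(x,\theta_{I_0<I_1},\dots,\theta_{I_{j-1}<I_j}),\theta_{I_j<I_{j+1}},\dots,\theta_{I_{m-1}<I_m}\bigr).\]

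\textbf{Step 2 (apply the $A_\infty$ relation).} For each fixed chain, the $A_\infty$ relation for the inputs $(x,\theta_{I_0<I_1},\dots,\theta_{I_{m-1}<I_m})$ says that the full sum of nested compositions vanishes. There are two kinds of terms:
\begin{itemize}
\item terms where the inner operation contains $x$, which are exactly the terms of Step 1;
\item terms where the inner operation is $\mu_r(\theta_{I_a<I_{a+1}},\dots,\theta_{I_{a+r-1}<I_{a+r}})$ and sits inside an outer $\mu$ whose first input is $x$.
\end{itemize}
Summing over all chains, the Step 1 expression therefore equals
\[\sum_{I=J_0<\dots<J_s=I'}\ \sum_{a}\mu_{s+1}\Bigl(x,\theta_{J_0<J_1},\dots,\Bigl[\sum_{J_a=K_0<\dots<K_r=J_{a+1}}\mu_r(\theta_{K_0<K_1},\dots,\theta_{K_{r-1}<K_r})\Bigr],\dots,\theta_{J_{s-1}<J_s}\Bigr).\]
This uses the bijection between chains $I_0<\dots<I_m$ with a marked consecutive block and coarser chains $J_\bullet$ in which one step $J_a<J_{a+1}$ is refined by a subchain $K_\bullet$.

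\textbf{Step 3 (conclude).} The bracketed sum is exactly the left side of (\ref{twist}) for the pair $J_a<J_{a+1}$. That vanishes by Proposition \ref{make}, hence $D^2=0$. This is also true in characteristic 2 without sign bookkeeping, since we work over $\Field$.

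\textbf{Main obstacle.} The only real care needed is the combinatorial regrouping in Step 2: the bijection between (chain, inner block) pairs and (coarse chain, refinement) pairs must be exact, including the blocks of length one, where $r=1$ gives $\mu_1(\theta)$. A secondary point is finiteness and well-definedness of all $\mu_n$ involved. For this I would invoke weak admissibility of every sub-datum $(\Sigma,\balpha,\bbeta_{I_0},\dots,\bbeta_{I_m},z)$ (Proposition \ref{admissible}) and the transversality for index $\le 4-n$ from Definition \ref{mapcondition}, which is what the $A_\infty$ relations need.
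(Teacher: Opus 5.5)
Your proposal is correct and takes the same approach as the paper, whose proof is the one line ``this follows from Proposition \ref{make} and $A_\infty$ relations''; your Steps 1--3 (expand $D^2$, apply the $A_\infty$ relation to $(x,\theta_{I_0<I_1},\dots,\theta_{I_{m-1}<I_m})$, and regroup by the chain-refinement bijection so each inner block is the left side of (\ref{twist})) fill in exactly that computation. Your indexing $\mu_{j+1}(x,\theta,\dots,\theta)$ with $F_{I\le I}=\mu_1$ is also the correct reading of the paper's formula for $F_{I\le I'}$, which writes $\mu_m$ where $\mu_{m+1}$ is meant.
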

	{\bf{Proof}:}
	This follows from Proposition \ref{make} and $A_\infty$ relations.\qed
	
	
	\begin{proposition}\label{quasi}
		There is a quasi-isomorphism over $\Field[\ZZ/2\ZZ]$ between $\CFh(\alpha, \underline{\{\beta_I\}_{I\in\{0,1\}^k}})$ and $\CFh(\alpha, \beta_{\infty^k})$.
	\end{proposition}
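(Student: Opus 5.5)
We argue by induction on $k$, following the iterated-cone argument of \cite[Section 6]{OS05} but phrased through the twisted complex $\underline{\{\beta_I\}_{I\in\Index}}$ of Proposition \ref{make}. For $0\le j\le k$ write $\mathcal{C}_j$ for the subcomplex (or quotient) of $\CFh(\alpha,\underline{\{\beta_I\}_{I\in\Index}})$ spanned by those $I$ whose first $j$ entries lie in $\{0,1\}$ and whose last $k-j$ entries equal $\infty$. This is a legitimate subquotient because the differential $D$ only raises indices in the order $<$, and $\infty$ is maximal. Thus $\mathcal{C}_0=\CFh(\alpha,\beta_{\infty^k})$ and $\mathcal{C}_k=\CFh(\alpha,\underline{\{\beta_I\}_{I\in\{0,1\}^k}})$. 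The goal is to produce an equivariant quasi-isomorphism $\mathcal{C}_{j}\to\mathcal{C}_{j-1}$ for each $j$, and then compose these maps.

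For the inductive step, fix $j$ and filter the $j$-th coordinate with the values $0,1$. Grouping the remaining coordinates, $\mathcal{C}_j$ is the cone of a map $\mathcal{C}_j^{(0)}\to\mathcal{C}_j^{(1)}$, where the superscript records the $j$-th entry. We define $\Phi_j:\mathcal{C}_j\to\mathcal{C}_{j-1}$ by the same formula as $D$, i.e.\ as the components $F_{I\le I'}$ with $I_j\in\{0,1\}$ and $I'_j=\infty$. Since $D^2=0$ on the full cube complex, $\Phi_j$ is a chain map. Next consider the three-column complex formed by $\mathcal{C}_j^{(0)}$, $\mathcal{C}_j^{(1)}$ and $\mathcal{C}_{j-1}$ (equivalently, the cone of $\Phi_j$). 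Filter it by the multi-index in the coordinates other than $j$. On the associated graded, each piece is exactly the three-term complex $\CFh(\alpha,\underline{\beta_{I^0}\beta_{I^1}\beta_{I^\infty}})$ attached to a single skein triple, with differential given by $\theta$'s of length $\le 2$. By the argument in the proof of Theorem \ref{main}, this complex is acyclic: one compares via Proposition \ref{iso} and Lemma \ref{intermediate} with the standard genus-two picture of Figure \ref{handleslide}, and then invokes \cite[Theorem 4.7]{OS05}. Because the filtration is finite, the spectral sequence comparison shows the total cone is acyclic, so $\Phi_j$ is a quasi-isomorphism. This is the same local skein argument as before, now applied at crossing $j$ with the other crossings frozen.

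Equivariance is handled as in Theorem \ref{main}. Every $\Theta_{I<I'}$ was chosen $\tau$-fixed, and the almost complex structures are $\tau$-invariant. By Proposition \ref{same}, the moduli spaces of polygons on the perturbed diagram agree with those of the pre-involutive diagram, so every $\mu_m(\cdot,\theta,\dots,\theta)$ commutes with $\tau$. Hence each $\Phi_j$ is a map of $\Field[\ZZ/2\ZZ]$-complexes, and so is the composite $\Phi_1\circ\cdots\circ\Phi_k$.

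The main obstacle is the inductive step. One must verify that the $m$-gon counts entering $\Phi_j$ along the $j$-th coordinate, with $\theta_{I<I'}=0$ for $d\ge2$, match the triangle and quadrilateral maps of the single-crossing exact triangle. One must also ensure that the weak admissibility and transversality needed to compare with the handleslid diagrams still hold in the presence of all the $\beta_I$ curves. I would first try to reduce to the single-crossing case by applying Lemma \ref{intermediate} to the curves of crossing $j$ only. Degree considerations, namely that $\CFh(\beta_I,\beta_{I'})$ has nothing in positive degree, should kill the additional higher terms.
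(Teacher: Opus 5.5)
Your proposal is correct and follows essentially the same route as the paper. The paper also works one coordinate at a time. It views the complex indexed by $\{0,1\}^{j}\times\{0,1,\infty\}\times\infty^{k-j-1}$ as the mapping cone of the equivariant map between consecutive partial resolutions. It then shows this cone is acyclic by filtering on the remaining coordinates, so that each graded piece is a single-crossing skein complex, acyclic by the proof of Theorem \ref{main}. Your explicit composite $\Phi_1\circ\cdots\circ\Phi_k$ and your concerns about admissibility and transversality in the presence of all the $\beta_I$ curves are details the paper simply subsumes under ``the same proof as Theorem \ref{main}.''
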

	
	{\bf{Proof:}}
	 For each $0\le j\le k-1$ we claim that there is a quasi-isomorphism over $\Field[\ZZ/2\ZZ]$ between $\CFh(\alpha, \underline{\{\beta_I\}_{I\in\{0,1\}^j\times\infty^{k-j}}})$ and $\CFh(\alpha, \underline{\{\beta_I\}_{I\in\{0,1\}^{j+1}\times\infty^{k-j-1}}})$. Consider the chain complex $\CFh(\alpha,\underline{\{\beta_I\}_{I\in\{0,1\}^{j}\times\{0,1,\infty\}\times\infty^{k-j-1}}})$. There is a filtration by the lexicographical order on the first $j$ coordinates, and the first page of the spectral sequence on each filtration level is $\CFh(\alpha, \underline{\{\beta_I\}_{I\in\epsilon\times\{0,1,\infty\}\times \infty^{k-j-1}}})$ for some $\epsilon\in\{0,1\}^{j}$, which is acyclic by the same proof as Theorem \ref{main}. This proves that the complex $\CFh(\alpha,\underline{\{\beta_I\}_{I\in\{0,1\}^{j}\times\{0,1,\infty\}\times\infty^{k-j-1}}})$ is acyclic. However the chain complex $\CFh(\alpha,\underline{\{\beta_I\}_{I\in\{0,1\}^{j+1}\times\infty^{k-j-1}}})$ can also be seen as a mapping cone between the complexes $\CFh(\alpha, \underline{\{\beta_I\}_{I\in\{0,1\}^j\times\infty^{k-j}}})$ and $\CFh(\alpha, \underline{\{\beta_I\}_{I\in\{0,1\}^{j+1}\times\infty^{k-j-1}}})$, and the map between them is $\ZZ/2\ZZ$-equivariant by the construction of the $\theta$ elements, thus these two chain complex are quasi-isomorphic over $\Field[\ZZ/2\ZZ]$.\qed

	 \

	 {\bf{Proof of Theorem \ref{spectral}:}}

	 Let $R=\Field[q]/q^N$ for some $N$.
	 Now we shift to cohomology and Proposition \ref{quasi} reads there is a quasi-isomorphism over $\Field[\ZZ/2\ZZ]$ between $\CFh^*(\alpha, \underline{\{\beta_I\}_{I\in\{0,1\}^k}})$ and $\CFh^*(\alpha, \beta_{\infty^k})$, only now for each mult-indices $I\le I'$ there is a dual map $F^*_{I\le I'}:\CFh^*(\alpha, \beta_{I'})\to\CFh^*(\alpha, \beta_{I})$. Consider the corresponding projection resolutions, we have a quasi-isomorphism between $\CFh^*(\alpha, \underline{\{\beta_I\}_{I\in\{0,1\}^k}})\tensor R$ and $\CFh^*(\alpha, \beta_{\infty^k})\tensor R$ by considering the spectral sequence induced by the power of $q$.

	 Now consider the filtration on $\CFh^*(\alpha, \underline{\{\beta_I\}_{I\in\{0,1\}^k}})\tensor R$ given by the lexicographical order coupled with the power of $q$. The $E^1$ page of the spectral sequence is $\bigoplus_{I\in\{0,1\}^k}\HFh^*(\alpha, \beta_I)$. Since each of $L_I$ represents an unlink, $\HFh^*(\alpha, \beta_I)$ is naturally isomorphic to \[\bigotimes_{C\in\pi_0(L_I), z\notin C}V_C,\] where $V_C=V$ is a $2$-dimensional vector space over $\Field$ with one generator in each of the gradings $\pm\frac{1}{2}$. Notice that the $\ZZ/2\ZZ$ action on each of the $\HF^*(\alpha, \beta_I)$ component is trivial, thus the $E^1$ page of the spectral sequence has only differential coming from the dual maps $F^*_{I<I'}:\HFh^*(\alpha, \beta_{I'})\to\HFh^*(\alpha, \beta_{I})$. By the same argument as in \cite[Section 6]{OS05}, we conclude that the second page of the spectral sequence is isomorphic to $\widetilde{Kh}(L)\tensor R$.\qed

		\

		\bibliographystyle{alpha}
		\bibliography{reference}
		
	\end{document}